\newtheorem{theorem}{Theorem}
\newtheorem{corollary}{Corollary}
\newtheorem{proposition}{Proposition}
\theoremstyle{definition}
\newtheorem{definition}{Definition}
\newtheorem{example}{Example}
\theoremstyle{remark}
\newtheorem{remark}{Remark}
\def\Z{{\mathbb Z}}
\def\R{{\mathbb R}}
\def\X{{\mathcal X}}
\DeclareMathOperator{\sign}{sign}
\DeclareMathOperator{\lk}{lk}
\DeclareMathOperator{\im}{im}
\DeclareMathOperator{\id}{id}
\date{}
\author{Igor Nikonov\footnote{Lomonosov Moscow State University}}
\title{Virtual index cocycles and invariants of virtual links}
\date{}
\begin{document}

\maketitle

\begin{abstract}
Virtual index cocycle is the 1-cochain that counts virtual crossings in the arcs of a virtual link diagram. We show how this cocycle can be used to reformulate and unify some known invariants of virtual links.
\end{abstract}

\section{Introduction}

A conceptional way to define virtual links uses {\em virtual diagrams}. Those are plane 4-valent graphs with two types of vertices: classical and virtual ones, see Fig.~\ref{fig:classical_virtual_crossings}.

\begin{figure}[h]
\centering\includegraphics[width=0.25\textwidth]{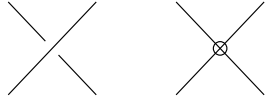}
\caption{A classical (left) and a virtual (right) crossings}\label{fig:classical_virtual_crossings}
\end{figure}

The diagrams undergo classical and virtual  (Fig.~\ref{fig:reidemeister_moves}) Reidemeister moves. The moves induce an equivalence on the set of virtual diagrams and the equivalence classes are called {\em virtual knots and links}.

\begin{figure}[h]
\centering\includegraphics[width=0.25\textwidth]{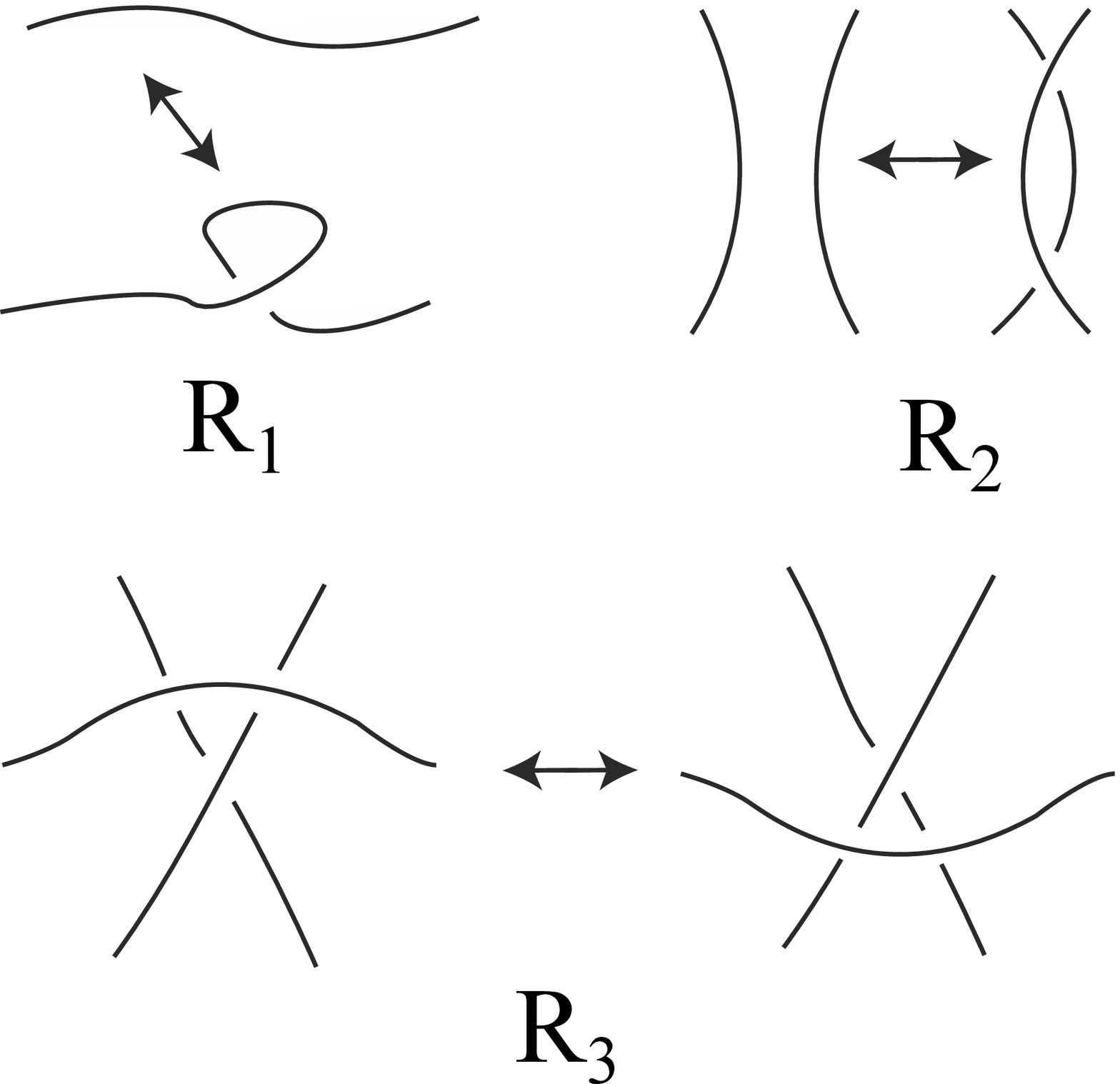}\qquad
\includegraphics[width=0.28\textwidth]{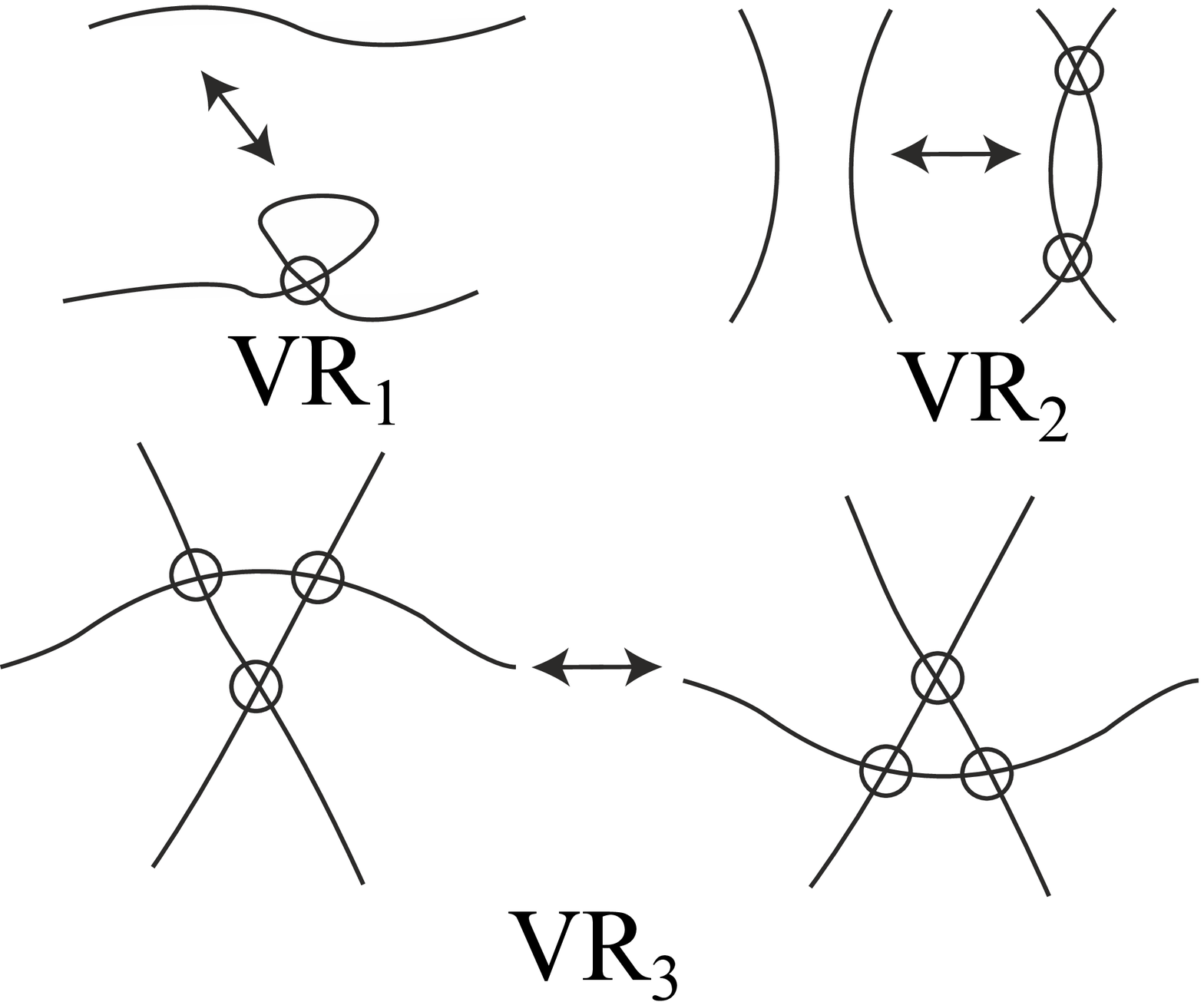}\\
\includegraphics[width=0.25\textwidth]{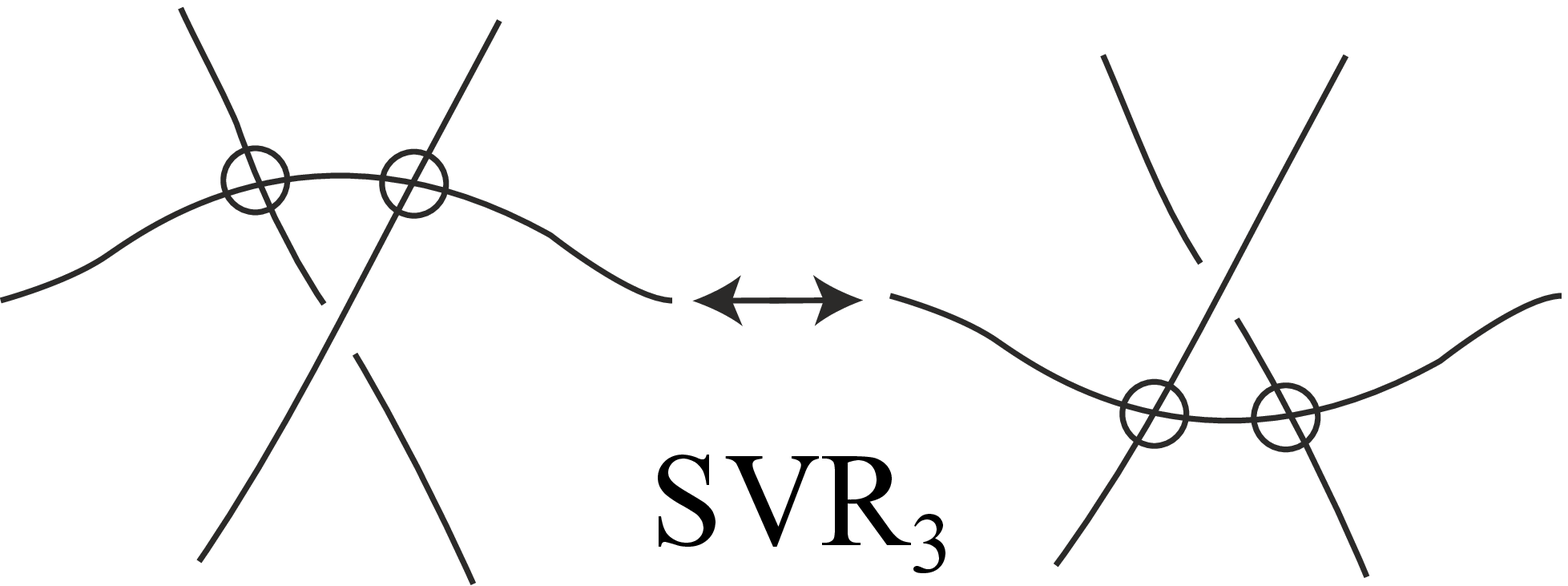}
\caption{Classical ($R_1-R_3$) and virtual ($VR_1-SVR_3$) Reidemeister moves}\label{fig:reidemeister_moves}
\end{figure}

Many invariants of virtual knots are defined as computable properties of virtual diagrams which are preserved under Reidemester moves. Some invariants ignore virtual crossings (e.g., Jones polynomial or the knot group), but for others, virtual crossing are an essential ingredient of the construction. Among such invariants are virtual intersection index polynomials~\cite{IKP,ILL,Im2013}, VA-polynomial~\cite{Manturov2002,Manturov2003}, virtual (bi)quandle~\cite{KauffmanManturov2005,Manturov2002} etc.

On the other hand, virtual knots can be defined by means Gauss diagrams. The {\em Gauss diagram} $G=G(D)$ of a virtual knot diagram $D$ is a chord diagram whose chords correspond to the classical crossings of $D$, see Fig.~\ref{fig:virtual_gauss_diagram}. The chords carry orientation (from over-crossing to under-crossing) and the sign of the crossings, see Fig.~\ref{fig:crossing_sign}.

\begin{figure}[h]
\centering\includegraphics[width=0.15\textwidth]{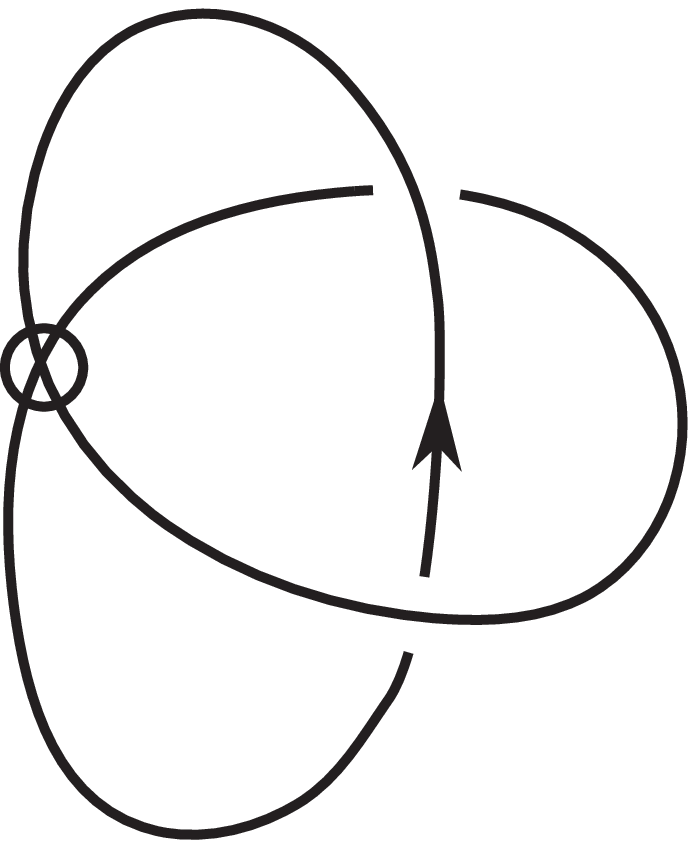}\qquad
\includegraphics[width=0.15\textwidth]{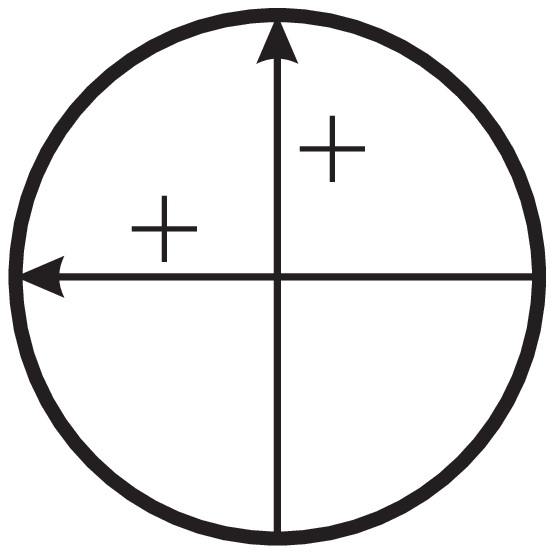}
\caption{A virtual knot and its Gauss diagram}\label{fig:virtual_gauss_diagram}
\end{figure}

\begin{figure}[h]
\centering\includegraphics[width=0.2\textwidth]{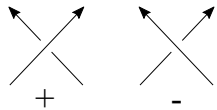}
\caption{The sign of a crossing}\label{fig:crossing_sign}
\end{figure}

Classical Reidemeister moves induce transformations of Gauss diagrams (see Fig.~\ref{fig:reidemeister_gauss}), and virtual Reidemeister moves have no effect on the diagrams. Virtual knots are exactly the equivalence classes of Gauss diagrams modulo the induced Reidemeister moves.

\begin{figure}[h]
\centering\includegraphics[width=0.6\textwidth]{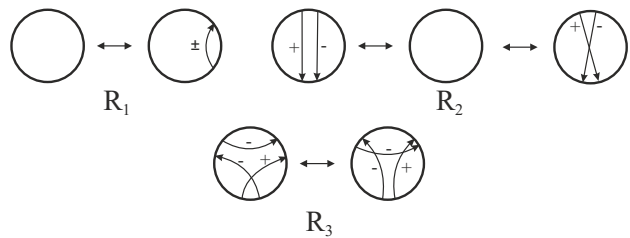}
\caption{Reidemeister moves on Gauss diagrams}\label{fig:reidemeister_gauss}
\end{figure}

There are no virtual crossings in Gauss diagrams. This means that any virtual knot invariant has no need of virtual crossings. If invariant's construction uses information on them, it can be extracted and replaced with something concerning only classical crossings. Performing this procedure for some knot invariants is the aim of this article.

The paper is organized as follows. In Section~\ref{sect:virtual_index_cocycle} we introduce virtual crossing cocycle that counts the algebraical number of virtual crossings on the arcs of a virtual diagram and show how this cocycle can be expressed in term of the index in the sense of~\cite{Cheng2013}. The subsequent section we consider how the cocycle can be used for reformulation of some invariants of virtual links in a form which does not account virtual crossing. We conclude the paper with a modification of the construction of Khovanov homology for virtual links~\cite{DKK2017,Manturov2007}; the modification exploits the parity cocycle, i.e. the index cocycle mod $2$.

The author was supported by the Russian Foundation for Basic Research (grant No. 19-01-00775-a).

The author is grateful to Louis Kauffman and Eiji Ogasa for fruitful discussions.

\section{Virtual index cochain}\label{sect:virtual_index_cocycle}

Let $D$ be an oriented virtual link diagram. It can be considered as an immersion of some 4-valent {\em covering graph} $\tilde D$ into the plane: the classical crossings of $D$ are the images of the vertices of $\tilde D$ and the virtual crossings are self-intersection which appear by the immersion.

Let $G(D)$ be the Gauss diagram of $D$. The graph $\tilde D$ can be obtained from $G(D)$ by collapsing each chord of $G(D)$ to a point. Thus, we have a sequence of projections $G(D)\stackrel{p_1}{\rightarrow}\tilde D \stackrel{p_2}{\rightarrow}D$, see Fig.~\ref{fig:gauss_virtual_diagrams}. 

\begin{figure}[h]
\centering\includegraphics[width=0.5\textwidth]{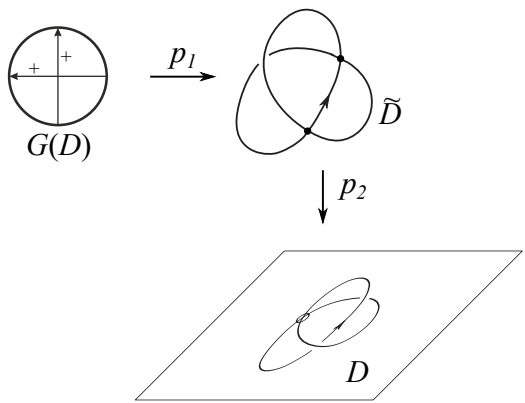}
\caption{A Gauss diagram, a graph and a virtual diagram}\label{fig:gauss_virtual_diagrams}
\end{figure}

The edges of the graph $\tilde D$ correspond to the {\em long arcs} of the diagram $D$, i.e. arcs which ends at classical crossings and can contain virtual but not classical crossings.

\begin{definition}\label{def:virtual_index_cocycle}
{\em Virtual index cocycle} of the virtual diagram $D$ is the cochain $vi_D\in C^1(\tilde D,\Z)$, such that for any edge $e$ of $\tilde D$ the value $vi_D(e)$ is equal to the sum of signs of the virtual crossings in the corresponding long arc in $D$ according to the rule in Fig.~\ref{fig:virtual_crossing_sign}.
\end{definition}

\begin{figure}[h]
\centering\includegraphics[width=0.3\textwidth]{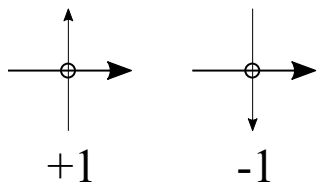}
\caption{The sign of a virtual crossing}\label{fig:virtual_crossing_sign}
\end{figure}

Virtual index cocycle $vi_D$ determines the cochain $(p_1)^*(vi_D)\in C^1(G(D),\Z)$ which we also denote as $vi_D$ for short. Note that $vi_D(c)=0$ for any chord $c$ in the Gauss diagram $G(D)$.

\begin{example}
The virtual index cocycle for a virtual trefoil diagram is shown in Fig.~\ref{fig:virtual_trefoil_virtual_index}.
\begin{figure}[h]
\centering\includegraphics[width=0.5\textwidth]{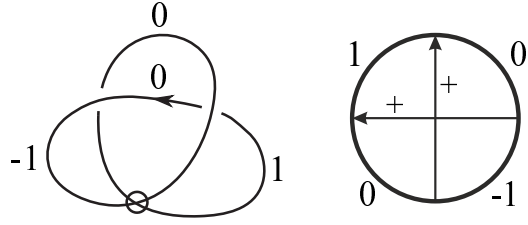}
\caption{The virtual index cocycle of a virtual trefoil diagram}\label{fig:virtual_trefoil_virtual_index}
\end{figure}
\end{example}

Let a virtual diagram $D'$ be obtained from $D$ by applying virtual Reidemeister moves. Then we can identify the graphs $\tilde D$ and $\tilde D'$ as well as the Gauss diagrams $G(D)$ and $G(D')$. Thus, we can compare the cochains $vi_D$ and $vi_{D'}$.

\begin{proposition}\label{prop:vi_property}
\begin{enumerate}
\item If the diagram $D'$ differs from $D$ by a move $VR_1,VR_2,VR_3$ then $vi_{D'}=vi_D$;
\item If the diagram $D'$ differs from $D$ by a move $SVR_3$ applied at a classical crossing $c$ then $vi_{D'}=vi_D \pm \delta c$ where $\delta$ is the differential in the cochain complex $C^*(\tilde D,\Z)$, see Fig.~\ref{fig:delta_crossing};
\item $vi_D(D)=0$.
\end{enumerate}
\end{proposition}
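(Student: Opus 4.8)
\emph{Proof plan.} The plan is to reduce everything to one elementary feature of the rule in Fig.~\ref{fig:virtual_crossing_sign}: the sign it attaches to a virtual crossing $v$ depends only on the orientations of the two local branches of $D$ through $v$, and it is antisymmetric in them, so the two long arcs through $v$ receive opposite signs (and if they coincide, the two passages through $v$ contribute cancelling signs). Granting this, item~(3) is immediate: regarding $D$ as the $1$-cycle in $\tilde D$ given by the oriented diagram (the formal sum of all long arcs), we get $vi_D(D)=\sum_e vi_D(e)$, and regrouping the sum over virtual crossings, each virtual crossing contributes the sum of the (one or two) signs it induces on the branches through it, namely $0$. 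The moves $VR_1$ and $VR_2$ of item~(1) go the same way: $VR_1$ creates a single virtual self-crossing lying on one long arc $e$, whose two branches contribute cancelling signs to $vi_D(e)$ and affect nothing else; $VR_2$ creates two virtual crossings between two local branches, and along either branch the other branch is met once from each side, so the two crossings contribute cancelling signs to each of the (at most two) long arcs involved. For $VR_3$ no crossing is created or destroyed, every pair of strands keeps its local directions, and since the contribution of a virtual crossing to a given long arc depends only on those directions, all values $vi_D(e)$ are unchanged. This settles~(1) and~(3).

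For~(2), I would use that $SVR_3$ slides a purely virtual branch $C$ — crossing the two branches $A,B$ of the classical crossing $c$ — from one "corner" at $c$ to the opposite one. Combinatorially this removes the two virtual crossings $C\times A$ and $C\times B$ from the two long arcs bounding the first corner and reinstalls them, with the same local directions (hence the same per-arc signs), on the two long arcs bounding the opposite corner. Since the (at most four) long arcs incident to $c$ are $A_{\mathrm{in}},A_{\mathrm{out}},B_{\mathrm{in}},B_{\mathrm{out}}$, the cochain $vi_{D'}-vi_D$ is supported on these edges and takes values $\pm1$ on them; comparing with Fig.~\ref{fig:delta_crossing}, the claim to be verified is that this pattern is exactly $\pm\delta c$.

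That last identification is the one step with real content, and the part I expect to be the main obstacle: one must rule out every other $\pm1$-combination on $A_{\mathrm{in}},A_{\mathrm{out}},B_{\mathrm{in}},B_{\mathrm{out}}$ and show the per-arc signs of $C\times A$ and $C\times B$ are correlated precisely so as to reproduce $\delta c$. I would check this by a direct local computation at $c$: put $c$ at the origin with $A,B$ locally straight with velocity vectors $\vec a,\vec b$; since $C$ cuts across a corner, its velocity $\vec c$ is, to first order, a combination $s\vec a\pm t\vec b$ with $s,t>0$, the sign depending on which of the two admissible corners is used. Expanding $\sign\det(\vec a,\vec c)$ and $\sign\det(\vec b,\vec c)$ then shows the two per-arc signs are equal when $C$ joins the two "incoming" (or the two "outgoing") ends of $c$ and opposite when it joins a mixed pair, which in either admissible form of the move is exactly the combination $\pm\delta c$ of Fig.~\ref{fig:delta_crossing}. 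The degenerate cases — $c$ a self-crossing of a component, or $C$ on the same component as $A$ or $B$ — are covered verbatim, since the computation is purely local at the crossings involved. This completes~(2) and the proposition.
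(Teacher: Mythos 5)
Your argument is correct and follows essentially the same route as the paper: parts (1) and (3) use exactly the paper's observation that the sign rule is antisymmetric at each virtual crossing (so kinks, $VR_2$-pairs and the total sum cancel, and $VR_3$ preserves local directions), and part (2) is the same local analysis at the crossing $c$ that the paper settles by a case-by-case inspection of Fig.~\ref{fig:SVR_move}. The only difference is that where the paper simply checks the pictures ``depending on the orientations of the arcs,'' you make that step explicit via the determinant computation showing the two per-arc signs agree for the in--in/out--out corners and disagree for mixed corners, which is a legitimate (and slightly more rigorous) rendering of the same verification.
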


\begin{figure}[h]
\centering\includegraphics[width=0.12\textwidth]{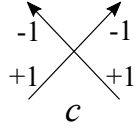}
\caption{The coboundary $\delta c$}\label{fig:delta_crossing}
\end{figure}

\begin{proof}
1.\ Let $e$ be an edge of $\tilde D$. If the corresponding long arc does not take part in the virtual Reidemeister move then $vi_D(e)=vi_{D'}(e)$.

If a move $VR_1$ is applied to $e$ then the new virtual crossing is accounted twice, with opposite signs. Hence $vi_D(e)=vi_{D'}(e)$.

If a move $VR_2$ is applied to $e$ then two new virtual crossings with opposite signs are added to $e$. Therefore $vi_{D'}(e)=vi_D(e)+1-1=vi_D(e)$.

The move $VR_3$ does not change the signs or the number of virtual crossings on long arcs.

2.\ Let us consider the move $SVR_3$. There are several cases of this move, see for example Fig.~\ref{fig:SVR_move}. In any case the virtual index cocycle changes by $\pm\delta c$ depending on the orientations of the arcs.

\begin{figure}[h]
\centering\includegraphics[width=0.3\textwidth]{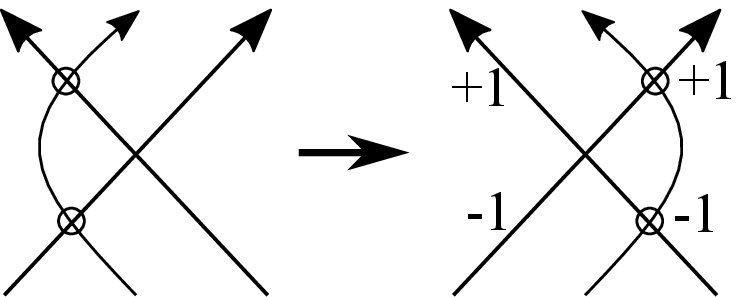}\qquad\qquad\includegraphics[width=0.3\textwidth]{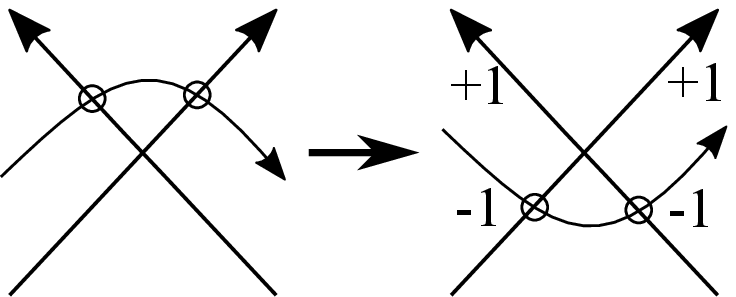}
\caption{Move $SVR_3$}\label{fig:SVR_move}
\end{figure}

3.\ In the sum $v_D(D)=\sum_{e\mbox{\scriptsize \ is a long arc of } D} vi_D(e)$ each virtual crossing contributes twice with different signs, so its total contribution to the sum is zero. Therefore, the sum is equal to $0$.
\end{proof}

\begin{corollary}
The cohomology class of the virtual index cocycle $vi_D$ in $H^1(\tilde D)$ (or in $H^1(G(D))$) does not change under virtual Reidemeister moves. Thus, the cohomology class is determined by the graph $\tilde D$ (or the Gauss diagram $G(D)$) but not by the placement of virtual crossings in the diagram $D$.
\end{corollary}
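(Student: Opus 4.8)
The plan is to derive the corollary directly from Proposition~\ref{prop:vi_property}. First I would recall, as in the paragraph preceding that proposition, that when $D'$ is obtained from $D$ by a virtual Reidemeister move the graphs $\tilde D$ and $\tilde{D'}$, as well as the Gauss diagrams $G(D)$ and $G(D')$, are canonically identified: a virtual move alters only the planar immersion, not the abstract $4$-valent graph, the cyclic order of classical crossings, or the sign data. Under this identification the cochain complexes $C^*(\tilde D,\Z)$ and $C^*(\tilde{D'},\Z)$ coincide, so it is meaningful to compare the classes $[vi_D]$ and $[vi_{D'}]$ in $H^1(\tilde D,\Z)$ (which, since $\tilde D$ is one-dimensional, is simply $C^1(\tilde D,\Z)$ modulo coboundaries).

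Next I would go through the virtual moves one by one. For $VR_1$, $VR_2$, $VR_3$, part~1 of Proposition~\ref{prop:vi_property} gives $vi_{D'}=vi_D$ already at the level of cochains, hence \emph{a fortiori} $[vi_{D'}]=[vi_D]$. For the move $SVR_3$ performed at a classical crossing $c$, part~2 gives $vi_{D'}=vi_D\pm\delta c$, and $\delta c$ is a coboundary by construction, so again $[vi_{D'}]=[vi_D]$ in $H^1(\tilde D,\Z)$. Since $VR_1,VR_2,VR_3,SVR_3$ are all the virtual moves, this proves that $[vi_D]\in H^1(\tilde D,\Z)$ is invariant under every virtual Reidemeister move.

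To transfer this between $H^1(\tilde D,\Z)$ and $H^1(G(D),\Z)$, I would observe that the projection $p_1\colon G(D)\to\tilde D$ collapses each chord --- a $1$-cell whose two endpoints are distinct $0$-cells --- to a point, so it is a composition of elementary collapses, each a homotopy equivalence; therefore $p_1^*\colon H^1(\tilde D,\Z)\to H^1(G(D),\Z)$ is an isomorphism, and it sends $[vi_D]$ to the class of $p_1^*(vi_D)$, which is exactly the cochain $vi_D$ on $G(D)$ by definition. Hence the invariance holds in $H^1(G(D),\Z)$ as well. For the concluding assertion of the corollary I would appeal to the fact recalled in the introduction --- that two virtual diagrams have the same Gauss diagram precisely when they differ by virtual Reidemeister moves --- which, together with the invariance just established, shows that $[vi_D]$ depends only on $G(D)$ (equivalently, on $\tilde D$ with its crossing and sign data) and not on the location of the virtual crossings in $D$.

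I do not anticipate a real obstacle; the argument is just bookkeeping on top of Proposition~\ref{prop:vi_property}. The two places that need a little care are verifying that the canonical identification of the cochain complexes for $D$ and $D'$ is precisely the one implicit in the statement of the proposition --- in particular that $VR_1$, which introduces a self-intersection of the immersion, truly leaves $\tilde D$ and $G(D)$ combinatorially unchanged --- and checking that the list of virtual moves used is exhaustive, so that no move falling outside parts~1 and~2 of the proposition can arise.
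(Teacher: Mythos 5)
Your proposal is correct and follows exactly the route the paper intends: the corollary is an immediate consequence of Proposition~\ref{prop:vi_property}, since parts~1 and~2 show that every virtual move changes $vi_D$ by at most a coboundary $\pm\delta c$ (the paper in fact states the corollary without a separate proof). Your extra care about the canonical identification of the cochain complexes and the isomorphism $p_1^*$ is sound bookkeeping that the paper leaves implicit.
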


\begin{definition}\label{def:virtual_index_class}
We call the cohomology class $[vi_D]\in H^1(\tilde D,\Z)\cong H^1(G(D),\Z)$  the {\em virtual index class} of the diagram $D$, and denote it by
$\iota_D = \iota_{\tilde D} = \iota_{G(D)}$.
\end{definition}

The class $\iota_D$ has different representatives besides the cocycle $vi_D$. The following statement says that any representative is realizable.

\begin{proposition}
Let $\alpha\in C^1(\tilde D,\Z)$ be a representative of the virtual index class $\iota_{\tilde D}$. Then there exists a virtual diagram $D'$ with the same covering graph $\tilde D$, such that $vi_{D'}=\alpha$.
\end{proposition}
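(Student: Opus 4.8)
The plan is to start from the diagram $D$ itself, whose virtual index cocycle is $vi_D$, and to modify $D$ by a sequence of moves that change $vi_D$ into the prescribed representative $\alpha$ while leaving the covering graph $\tilde D$ untouched. Since $\alpha$ and $vi_D$ represent the same class $\iota_{\tilde D}$, their difference is a coboundary: $\alpha - vi_D = \delta\beta$ for some $\beta \in C^0(\tilde D,\Z)$. The value $\beta(v)$ is an integer attached to each vertex $v$ of $\tilde D$, i.e. to each classical crossing $c$ of $D$. Writing $\beta = \sum_c n_c\, c$ with $n_c\in\Z$, we get $\delta\beta = \sum_c n_c\,\delta c$, so it suffices to realize each elementary change $vi \mapsto vi + \delta c$ (and its inverse $vi \mapsto vi - \delta c$) by a modification of the diagram that preserves $\tilde D$.

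The key step is the observation already used in part (2) of Proposition~\ref{prop:vi_property}: applying an $SVR_3$ move at the classical crossing $c$ changes $vi_D$ exactly by $\pm\delta c$, with the sign controlled by the local orientations, and such a move does not alter the covering graph (it only slides a strand across a classical crossing, which is a move on the immersed picture, not on $\tilde D$ or $G(D)$). However, a single $SVR_3$ move requires a virtual strand to be available near $c$ in the right position; if none is present one first creates a small pair of virtual crossings by a $VR_2$ move (which by part (1) does not change $vi_D$) and routes one of the new strands past $c$. Iterating $|n_c|$ such moves at each crossing $c$ — using the orientation-appropriate version of $SVR_3$ to get the sign of $n_c$ right — produces a diagram $D'$ with the same $\tilde D$ and with $vi_{D'} = vi_D + \sum_c n_c\,\delta c = vi_D + \delta\beta = \alpha$.

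The main obstacle is the bookkeeping of signs and the verification that one can always route an auxiliary virtual strand to the chosen crossing $c$ without disturbing the rest of the diagram: one must check that pushing a finger of the diagram (obtained from a $VR_2$ finger move) through purely virtual territory, possibly across other virtual crossings via $VR_2$ and $VR_3$, never forces a classical Reidemeister move and hence never changes $\tilde D$. This is intuitively clear — virtual strands may always be detoured around classical crossings and through each other — but writing it carefully is the technical heart. Once that is granted, the sign issue is handled by noting that $SVR_3$ comes in both orientation flavours shown in Fig.~\ref{fig:SVR_move}, realizing $+\delta c$ and $-\delta c$ respectively, so every integer coefficient $n_c$ is attainable. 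This completes the construction of $D'$.
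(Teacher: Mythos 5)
Your proposal is correct and follows essentially the same route as the paper: both write $\alpha - vi_D$ as a sum of elementary coboundaries $\pm\delta c$ and realize each summand by virtual Reidemeister moves at the crossing $c$ (the paper simply exhibits the local picture in Fig.~\ref{fig:index_representatives}, which amounts to your $VR_2$-finger followed by an $SVR_3$). The routing issue you flag as the technical heart disappears once the auxiliary strand is taken from an arc incident to $c$ itself, making the construction purely local.
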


\begin{proof}
Since $[\alpha]=\iota_{\tilde D}$, we have $\alpha=vi_D+\sum_{c_i}\pm\delta(c_i)$ for some sequence of crossings $c_i$. We construct $D'$ by applying virtual Reidemeister moves to the diagram $D$. It is sufficient to consider the case $\alpha=vi_D\pm\delta(c)$. The construction of $D'$ is shown in Fig.~\ref{fig:index_representatives}.
\begin{figure}[h]
\centering\includegraphics[width=0.6\textwidth]{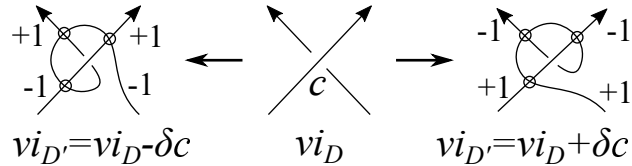}
\caption{Construction of the diagram $D'$}\label{fig:index_representatives}
\end{figure}
\end{proof}

\subsection{The canonical index cocycle}

Let us construct a representative of the virtual index class which is independent from the location of virtual crossings.

\begin{definition}
For any classical crossing $c$ of the diagram consider the cochains $\beta^l(c)$ and $\beta^r(c)$ in $C^1(\tilde D,\Z)$ as shown in Fig.~\ref{fig:left_right_cochains} (the labels of the edges which are not incident to $c$ are zero). Define the {\em left (right) canonical index cocycle} as the sum $ci^l_D=\sum_c \beta^l(c)$ ($ci^r_D=\sum_c \beta^r(c)$).

\begin{figure}[h]
\centering\includegraphics[width=0.25\textwidth]{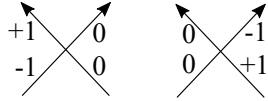}
\caption{The left and right cochains $\beta^l(c)$ and $\beta^r(c)$ (the choice of under-overcrossing does not matter)}\label{fig:left_right_cochains}
\end{figure}
\end{definition}

Note that the canonical cocycles are determined by the graph $\tilde D$ or the Gauss diagram $G(D)$ and don't rely on virtual crossings.

\begin{proposition}\label{prop:left_right_canonical_cocycles}
The left and the right canonical cocycles coincide: $$ci^l_D=ci^r_D.$$
\end{proposition}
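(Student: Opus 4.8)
The plan is to show that the difference $ci^l_D - ci^r_D$ vanishes as a cochain by analyzing its value on a single edge $e$ of $\tilde D$. Fix such an edge, corresponding to a long arc of $D$ running from one classical crossing to another. The only contributions to $\left(ci^l_D - ci^r_D\right)(e)$ come from the two crossings at the endpoints of $e$ (or from a single crossing, if $e$ is a loop at one crossing), since $\beta^l(c)$ and $\beta^r(c)$ are supported on the four edges incident to $c$. So it suffices to check, crossing by crossing, how the four incident edge-labels of $\beta^l(c)$ and $\beta^r(c)$ are arranged, and then confirm that when we sum over all crossings the local discrepancies telescope to zero along each edge.

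First I would record explicitly, from Fig.~\ref{fig:left_right_cochains}, the labels that $\beta^l(c)$ and $\beta^r(c)$ assign to the four half-edges at $c$: label the two outgoing edges and two incoming edges according to the orientation, and note that $\beta^r(c)$ is obtained from $\beta^l(c)$ essentially by reflecting the ``left'' choice to the ``right'' choice at $c$. The key local identity I expect is that $\beta^l(c) - \beta^r(c) = \pm\delta c$, i.e. the difference at each crossing is a coboundary of the $0$-cochain dual to that crossing (the vertex of $\tilde D$), with a sign determined by the local orientation pattern. This is plausible because both cochains differ only by which pair of incident edges carries the nonzero labels, and the signed sum of the four incident half-edge contributions, read with orientation, is exactly what $\delta c$ computes.

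Granting the local identity, summing over all classical crossings gives $ci^l_D - ci^r_D = \sum_c (\pm\delta c)$. Now I would argue this global sum is actually zero as a cochain, not merely as a cohomology class. For a fixed edge $e$ with endpoints at crossings $c_1$ and $c_2$, the two terms $(\pm\delta c_1)(e)$ and $(\pm\delta c_2)(e)$ have to cancel; this should follow from checking that the sign attached to $\delta c$ at a crossing is consistent with the incidence sign of $e$ at that crossing, so that the contribution of $e$ seen ``from $c_1$'' is the negative of its contribution seen ``from $c_2$.'' Equivalently, one verifies that $\sum_c \pm\delta c$ has value $0$ on every edge by a direct orientation bookkeeping at the two endpoints. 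This cancellation is exactly the cochain-level analogue of the fact that $\iota_D$ is well defined, and the two canonical cocycles, being built symmetrically, must land on the same representative.

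The main obstacle I anticipate is the sign bookkeeping: getting the local identity $\beta^l(c)-\beta^r(c) = \pm\delta c$ with the correct sign convention, and then checking that these signs are precisely the ones that force cancellation along each edge rather than mere cohomological triviality. This is a finite case check over the possible orientation patterns at a 4-valent vertex (there are only a couple of genuinely distinct cases up to symmetry, as in Fig.~\ref{fig:SVR_move}), but it must be done carefully because an overall sign error would only give $ci^l_D = -ci^r_D$ or equality up to a coboundary rather than the sharp statement $ci^l_D = ci^r_D$. Once the vertex-local computation is pinned down, the global conclusion is immediate.
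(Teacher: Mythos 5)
Your plan is correct and, once the deferred local computation is carried out, it proves the proposition; but it is organized differently from the paper's argument, which is arc-based rather than crossing-based. The paper fixes a long arc, lists the four possible orientation patterns at its two endpoint crossings, and reads off from Fig.~\ref{fig:left_right_cochains} that $ci^l_D$ and $ci^r_D$ assign the same value to the arc in each case. You instead isolate the vertex-local identity $\beta^l(c)-\beta^r(c)=\pm\delta c$, and this identity is indeed true: at an oriented crossing, $\beta^l(c)$ is supported (with values $+\varepsilon$ and $-\varepsilon$) on the incoming edge of one strand and the outgoing edge of the other, $\beta^r(c)$ on the complementary pair, and the difference comes out to $\varepsilon$ on both incoming edges and $-\varepsilon$ on both outgoing edges, i.e.\ $\varepsilon\,\delta c$. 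The obstacle you flag --- consistency of the sign --- resolves favorably for a reason worth making explicit: all classical crossings of an oriented diagram are locally identical up to rotation and over/under data, and neither $\beta^{l}(c)$, $\beta^{r}(c)$ nor $\delta c$ sees the over/under data, so $\varepsilon$ is one universal sign and there is really only one local case to check (versus four in the paper's arc-by-arc version). Then $ci^l_D-ci^r_D=\varepsilon\sum_c\delta c=\varepsilon\,\delta(\mathbf{1})$, where $\mathbf{1}$ is the $0$-cochain equal to $1$ on every vertex, and this vanishes on every edge because every long arc both begins and ends at a classical crossing; this is a cleaner way to phrase your telescoping step than endpoint-by-endpoint bookkeeping. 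Your packaging buys a conceptual explanation (the two canonical cocycles differ by the coboundary of the constant $0$-cochain, hence not at all), while the paper's buys brevity, since it never needs the global summation. What is missing from your write-up as it stands is only the actual verification of the local identity and of the universality of its sign, both of which you correctly identify as the crux and both of which do go through.
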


\begin{proof}
Consider a long arc in the diagram. It ends in classical crossings. There are four cases of link orientations in these crossings (we disregard the over-undercrossing structure here). In all cases the left and the right canonical cocycles give the same value on the arc, see Fig.~\ref{fig:left_right_cochains_equality}.

\begin{figure}[h]
\centering\includegraphics[width=0.7\textwidth]{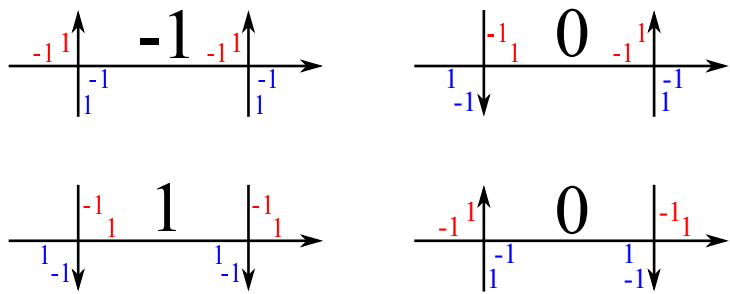}
\caption{Values of the canonical index cocycle on a long arc (the red marks are the left cocycle,  the blue ones are the right) }\label{fig:left_right_cochains_equality}
\end{figure}
\end{proof}

Below we denote the canonical cocycle by $ci_D=ci^l_D=ci^r_D$ or $ci_{\tilde D}$ or $ci_{G(D)}$.

It is time to formulate the central result of the paper. The theorem below states that the virtual index cocycle and virtual link invariants based on it can be reformulated without using information on virtual crossings.

\begin{theorem}\label{thm:canonical_index_cocycle}
Let $D$ be a diagram of an oriented virtual link. Then the canonical index cocycle $ci_D$ is homologous to the virtual index cocycle $vi_D$ in $C^1(\tilde D,\Z)$. In other words, the cocycle $ci_D$ is a representative of the index class $\iota_D$.
\end{theorem}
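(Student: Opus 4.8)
The plan is to compare the two cocycles $vi_D$ and $ci_D$ by exhibiting an explicit $0$-cochain $f\in C^0(\tilde D,\Z)$ with $vi_D - ci_D = \delta f$. The natural candidate for $f$ is a ``winding-number'' or ``local index'' function on the vertices of $\tilde D$: for each vertex (classical crossing) $c$ the value $f(c)$ should record the algebraic count of virtual crossings separating $c$ from some chosen basepoint along the diagram, or equivalently the index of $c$ in the sense of Cheng (as mentioned in the introduction, $vi_D$ is expressible through that index). First I would fix an oriented long arc as basepoint and, walking along the underlying closed curve of $D$, assign to the $k$-th virtual crossing encountered a running total built from the signs in Fig.~\ref{fig:virtual_crossing_sign}; this gives a well-defined value at each point of the diagram away from the classical crossings, and in particular a value $f(c)$ at each classical crossing $c$ (one has to check the two passes through $c$ give consistent data, which is where the sign conventions in Fig.~\ref{fig:crossing_sign} and Fig.~\ref{fig:left_right_cochains} must be matched).

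The key computational step is then purely local: pick an edge $e$ of $\tilde D$ with endpoints $c_-$ (tail) and $c_+$ (head). By construction of $f$, the difference $f(c_+)-f(c_-)$ equals the sum of signs of the virtual crossings along the long arc $e$, with a correction term coming from how the running total jumps as it passes through $c_\pm$. That correction term is exactly the contribution of $\beta^l(c_\pm)$ (equivalently $\beta^r(c_\pm)$, by Proposition~\ref{prop:left_right_canonical_cocycles}) to the edge $e$. Reading off Fig.~\ref{fig:left_right_cochains}, one verifies case by case — according to the four orientation patterns at a crossing already enumerated in the proof of Proposition~\ref{prop:left_right_canonical_cocycles} — that $vi_D(e) = (\delta f)(e) + ci_D(e)$, i.e. $vi_D - ci_D = \delta f$. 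Since both $vi_D$ and $ci_D$ are cocycles, it suffices to check this on a spanning set of edges, and the local nature of $\delta f$ and of $ci_D=\sum_c\beta(c)$ reduces everything to the behavior at a single crossing.

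An alternative, slightly cleaner route avoids constructing $f$ globally: it is enough to show that $ci_D$ itself is a virtual index cocycle of some diagram $D'$ with covering graph $\tilde D$, and then invoke the Corollary after Proposition~\ref{prop:vi_property} together with the realizability Proposition. Concretely, I would build $D'$ by deleting all virtual crossings of $D$ and re-inserting, near each classical crossing $c$, precisely the virtual crossings dictated by $\beta^l(c)$; one checks this produces a legitimate virtual diagram (the immersed curve closes up because $ci_D$ is a cocycle, hence has zero total, matching Proposition~\ref{prop:vi_property}(3)) whose virtual index cocycle is by inspection $ci_D$. Then $[ci_D]=[vi_{D'}]=\iota_{\tilde D}=[vi_D]$.

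The main obstacle in either approach is the bookkeeping of signs: one must confirm that the sign rule for virtual crossings (Fig.~\ref{fig:virtual_crossing_sign}), the sign rule for classical crossings (Fig.~\ref{fig:crossing_sign}), and the definition of $\beta^l,\beta^r$ (Fig.~\ref{fig:left_right_cochains}) are mutually compatible, so that the local identity $vi_D(e)-ci_D(e)=(\delta f)(e)$ comes out with the correct signs in all four orientation cases at each endpoint — i.e.\ sixteen combinatorial cases for a generic edge, most of which are handled uniformly. Everything else is routine: the cochains are finitely supported, $\delta$ is local, and the verification is a finite check.
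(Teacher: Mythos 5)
Your first route has a genuine gap exactly at the point you dismiss as sign bookkeeping: the potential $f$ is not well defined. Your running total assigns to each classical crossing two candidate values, one for each passage of the strand through it, and their difference is (up to sign) the global quantity $vi_D(D^{\pm}_c)$ — the signed count of virtual crossings on a whole half of the diagram — not something a local case analysis at the crossing can control. If you build the $\beta^l$-corrections into the traversal so that $(\delta f)(e)=vi_D(e)-ci_D(e)$ holds ``by construction'' along each long arc, then the consistency of the two values at each crossing (and the closing-up of the traversal, and, for links, the matching of values at mixed crossings, where the wriggle numbers enter) is precisely the statement that $vi_D-ci_D$ vanishes on a generating set of cycles of $\tilde D$, i.e.\ it is equivalent to the theorem. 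No ``sixteen combinatorial cases'' can settle this: some genuinely global input reflecting planarity of the immersion is required — e.g.\ a vanishing intersection-number argument in $\R^2$ in the spirit of Proposition~\ref{prop:smoothing_cocycle_property}, proving $vi_D(D^+_c)=Ind_D(c)$ directly — and you never supply it.

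Your second route is either circular or rests on an unjustified construction. Invoking the realizability Proposition for $\alpha=ci_D$ presupposes $[\,ci_D\,]=\iota_{\tilde D}$ (its proof begins with $[\alpha]=\iota_{\tilde D}$), which is exactly what is to be proved. The direct alternative, ``delete all virtual crossings of $D$ and re-insert, near each classical crossing, the virtual crossings dictated by $\beta^l(c)$,'' is not an available operation: virtual crossings are the forced self-intersections of a planar immersion of $\tilde D$, not decorations one may prescribe, and each prescribed crossing needs a second strand actually passing there; the observation that $ci_D$ has zero total does not address whether such an immersion exists. The paper closes precisely this hole with an explicit construction: the left virtual double $VD(D)$, obtained from $D$ by virtual (detour) moves — hence with the same index class by Proposition~\ref{prop:vi_property} — for which a local inspection shows $vi_{VD(D)}=ci^l_D$. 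Some such explicit diagram, or an intersection-number proof of $vInd_D(c)=Ind_D(c)$ together with the corresponding statement on unicursal components, is the missing core of your argument.
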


\begin{proof}
According to Proposition~\ref{prop:vi_property} it is suffice to construct a diagram $D'$ which can be obtained from $D$ by virtual Reidemeister moves, such that $vi_{D'}=ci_D$.

Given a virtual link diagram $D$, take an edge on each of its component and form an arc that contains only virtual crossings and goes along the component in the opposite direction on the left of the component, see Fig.~\ref{fig:virtual_double_diagram}. We denote the obtained diagram by $VD(D)$ and call it a {\em (left) virtual double} diagram of $D$.

\begin{figure}[h]
\centering\includegraphics[width=0.4\textwidth]{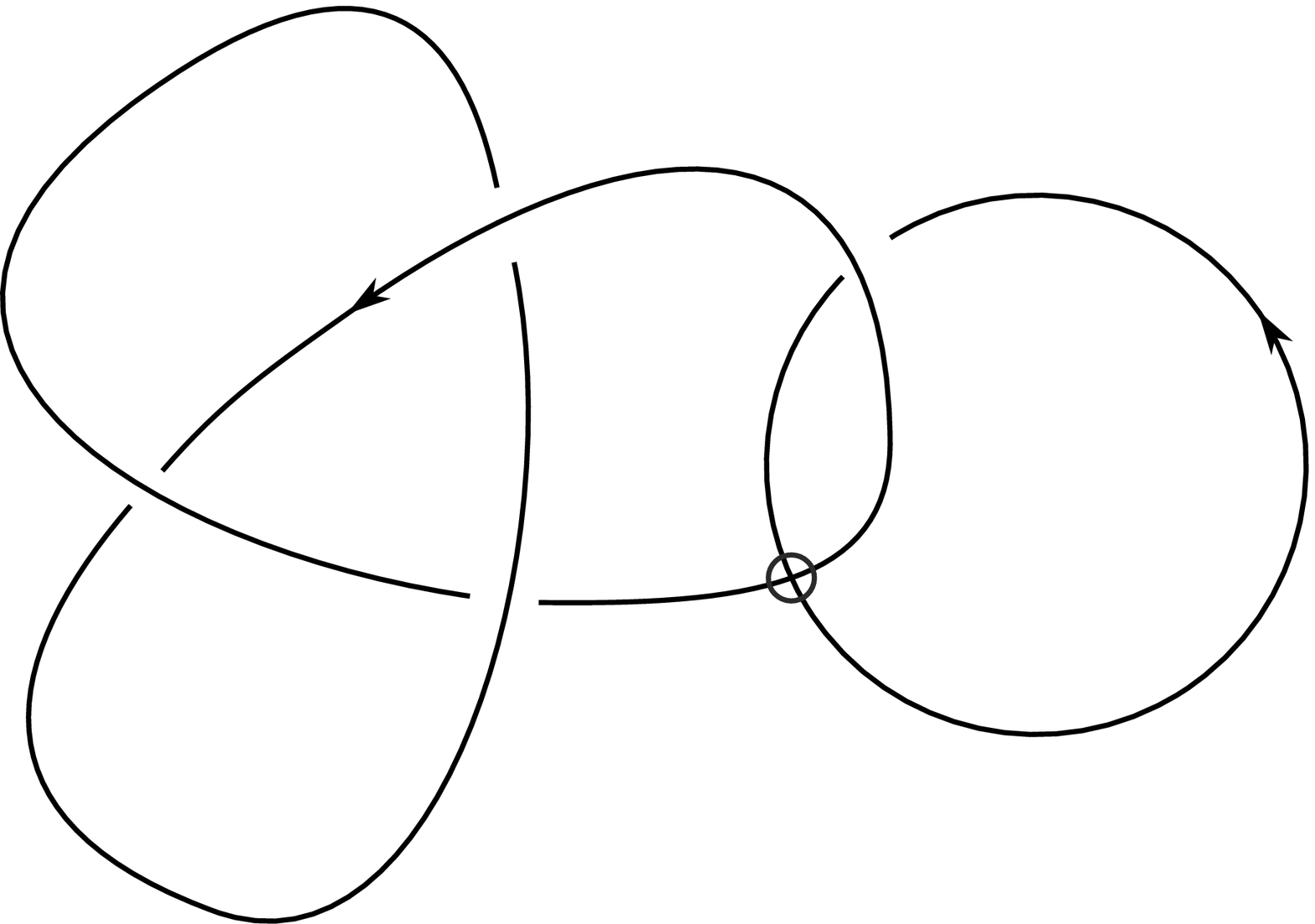}
\quad\centering\includegraphics[width=0.4\textwidth]{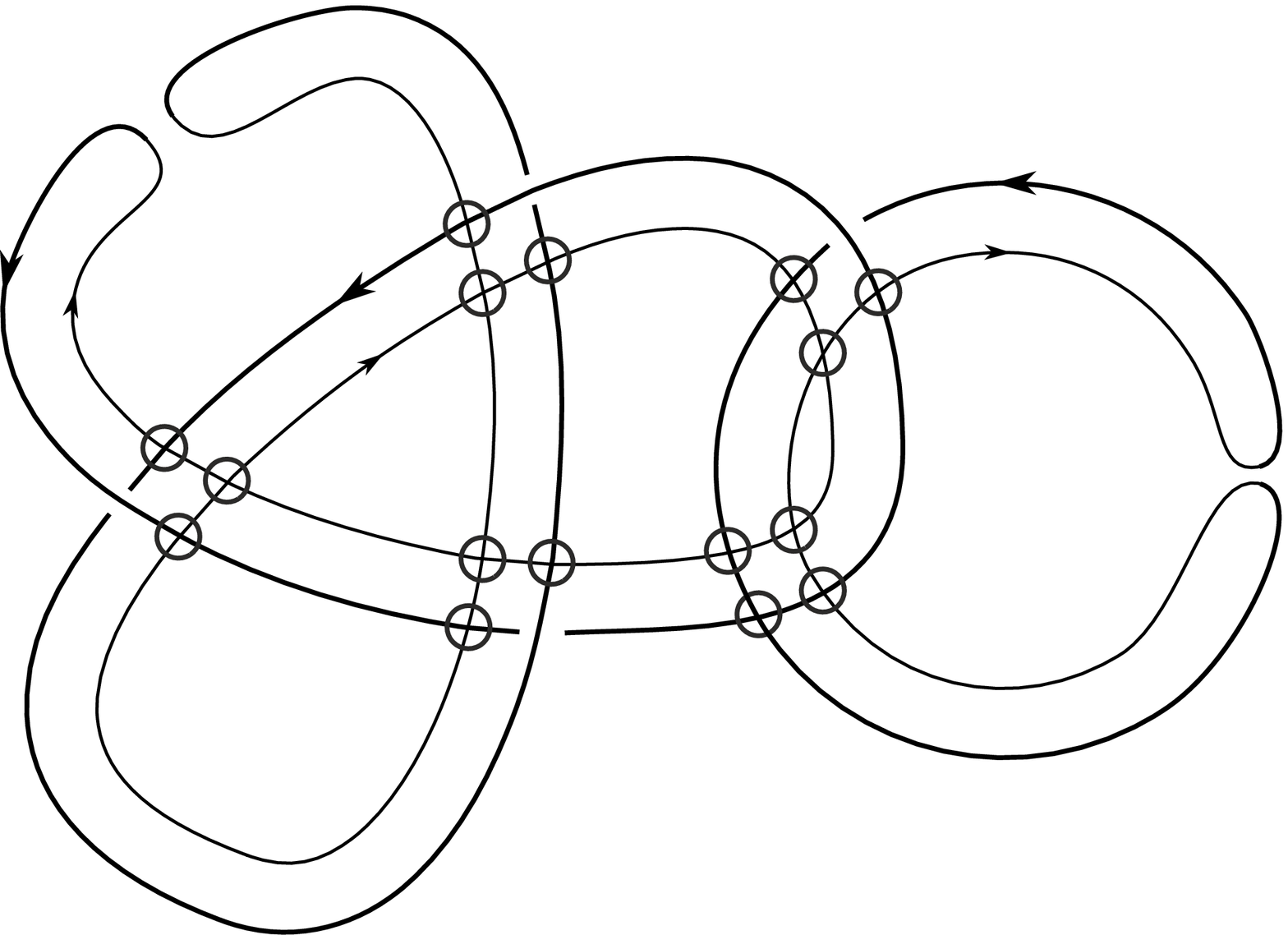}
\caption{A virtual link diagram and its left virtual double}\label{fig:virtual_double_diagram}
\end{figure}

Let us check that $vi_{VD(D)}=ci_D$. The contribution of every virtual crossing of $D$ vanishes in the virtual double because the neighbourhood of the crossing in the virtual double contains pairs of opposite virtual crossings, see Fig.~\ref{fig:classical_virtual_double_crossings} right. A classical crossing of $D$ produces the labels of the cochain $\beta^l(c)$, see Fig.~\ref{fig:classical_virtual_double_crossings} left and Fig.~\ref{fig:left_right_cochains} left. The virtual crossings on the pure virtual arc that parallel to the initial diagram $D$, occur in pairs and cancel each other.
\begin{figure}[h]
\centering\includegraphics[width=0.3\textwidth]{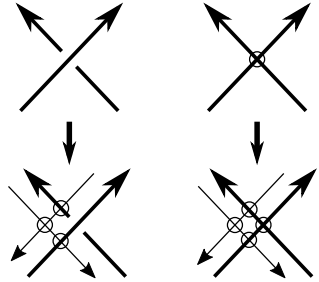}
\caption{A classical and a virtual crossings in the virtual double}\label{fig:classical_virtual_double_crossings}
\end{figure}

Thus, $vi_{VD(D)}=\sum_c \beta^l(c)=ci^l_D=ci_D$.
\end{proof}

\subsection{Virtual index cocycle and the (virtual) index}\label{sect:virtual_index}

Let $D$ be an oriented virtual {\em knot} diagram.

Let $c$ be a classical crossing of the diagram $D$. 
Then the diagram 
splits into the {\em left} and the {\em right halves} $D^l_c$ and $D^r_c$, see Fig.~\ref{fig:knot_halves}. Also, we introduce notation for the {\em signed halves} $D^\pm_c$ where $D^{\sign(c)}_c=D^l_c$, $D^{-\sign(c)}_c=D^r_c$.

One can assign to the crossing $c$ two numbers: the virtual~\cite{Im2013} and the classical~\cite{Cheng2013} indices.

\begin{figure}[h]
\centering\includegraphics[width=0.5\textwidth]{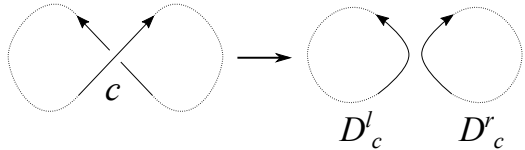}
\caption{Halves of the diagram}\label{fig:knot_halves}
\end{figure}

\begin{definition}\label{def:virtual_index}
The {\em virtual index} $vInd_D(c)$ of a crossing $c$ is the number $vi_D(D^+_c)$.
\end{definition}

\begin{remark}
The virtual index $vInd_D(c)$ is the same as the virtual intersection index $i(x)$ in~\cite{Im2013}.
\end{remark}

\begin{definition}[\cite{Cheng2013}]\label{def:index}
Let $D$ be an oriented virtual knot diagram and $c$ be a classical 
crossing of $D$. We count the sum of the intersection numbers (see Fig.~\ref{fig:classical_crossing_sign}) of the classical crossings in the positive half $D^+_c$. This sum $Ind_D(c)$ is called the {\em index} of the crossing $c$.

\begin{figure}[h]
\centering\includegraphics[width=0.6\textwidth]{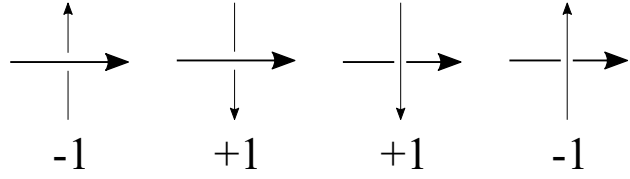}
\caption{Intersection number of a classical crossing}\label{fig:classical_crossing_sign}
\end{figure}
\end{definition}

\begin{proposition}\label{prop:index_and_canonical_cocycle}
For any classical 
crossing $c$ of the diagram $D$ the index $Ind_D(c)$ is equal to $ci_D(D^+_c)$.
\end{proposition}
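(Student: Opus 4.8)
The plan is to compare, long arc by long arc, the value of the canonical cocycle $ci_D$ on the positive half $D^+_c$ with the index $Ind_D(c)$, which is a signed count of the classical crossings lying in $D^+_c$. The key observation is that $ci_D = \sum_{c'} \beta^l(c')$ (equivalently $\sum_{c'} \beta^r(c')$ by Proposition~\ref{prop:left_right_canonical_cocycles}), so $ci_D(D^+_c) = \sum_{c'} \beta^l(c')(D^+_c)$. Thus it suffices to show that each classical crossing $c'$ contributes exactly its intersection number to $ci_D(D^+_c)$ when $c'$ lies in the interior of $D^+_c$, and contributes $0$ otherwise (with the special crossing $c$ itself requiring separate bookkeeping).

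First I would fix the description of $D^+_c$ as a sub-1-chain of $\tilde D$: starting at $c$ and following both outgoing strands of the positive half until they return to $c$, one obtains a sequence of long arcs whose union is $D^+_c$. Evaluating $ci_D$ on this chain means summing $vi_{VD(D)}$ (via Theorem~\ref{thm:canonical_index_cocycle}) over these long arcs, which by the proof of that theorem equals $\sum_{c'} \beta^l(c')$ evaluated on the same chain. Now $\beta^l(c')$ is supported on the four edges incident to $c'$. If $c'$ is not on the boundary of $D^+_c$, then either all four of its incident edges lie in $D^+_c$ — in which case the four labels of $\beta^l(c')$ sum to $0$ along that portion, contributing nothing — or none do, again contributing nothing. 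So only crossings where the arc $D^+_c$ enters and leaves, i.e. crossings $c'$ that are traversed by $D^+_c$, contribute, and each such contributes a single label of $\beta^l(c')$ on the one or two edges of $D^+_c$ incident to $c'$; a direct inspection of Fig.~\ref{fig:left_right_cochains} against Fig.~\ref{fig:classical_crossing_sign} shows this label equals the intersection number of $c'$ in the four orientation configurations. One then checks the endpoint crossing $c$ itself: the two half-edges at $c$ belonging to $D^+_c$ carry labels of $\beta^l(c)$ whose sum is the sign $\sign(c)$ or $0$ depending on convention; this must be reconciled with whether Definition~\ref{def:index} counts $c$ among ``the classical crossings in the positive half,'' and the two conventions are designed to match.

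The main obstacle I expect is the endpoint bookkeeping at $c$ and, more generally, getting the boundary contributions exactly right: a long arc at the ``seam'' of $D^+_c$ is incident to $c$ at one end and to some other crossing $c'$ at the other end, so the cancellation argument ``all four labels of $\beta^l(c')$ sum to zero'' must be applied with care to the arcs that are only partially inside $D^+_c$ — one has to verify that the surviving label is precisely the intersection number and that the orientation at $c'$ (rather than the over/under data, which is irrelevant to $\beta^l$) determines the sign correctly. This is exactly the content of Fig.~\ref{fig:left_right_cochains_equality} reused in a slightly different role, so the argument is essentially a finite case check over the four orientation types at a crossing, combined with the global fact (Proposition~\ref{prop:vi_property}(3), or rather its canonical analogue) that $ci_D$ is a cocycle so the choice of which strands form $D^+_c$ is consistent. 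Once these local contributions are tabulated, summing over all $c'$ yields $ci_D(D^+_c) = Ind_D(c)$.
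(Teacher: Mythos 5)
Your argument is correct and is essentially the paper's own proof: both decompose $ci_D(D^+_c)=\sum_{c'}\beta^l(c')(D^+_c)$ and evaluate each summand locally, checking that a crossing $c'$ traversed by $D^+_c$ on one strand contributes its intersection number, that crossings with zero or all four incident edges in $D^+_c$ contribute $0$, and that the endpoint crossing $c$ itself contributes $0$ (the paper's ``$-1+1=0$ or $0+0=0$''). The detour through Theorem~\ref{thm:canonical_index_cocycle} and the virtual double is unnecessary, since $ci_D=\sum_{c'}\beta^l(c')$ by definition, but it does no harm.
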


\begin{proof}
Indeed, for any crossing $c'\ne c$ in $D$ the value $\beta^l_{c'}(D^+_c)$ is zero if $c'$ does not belong to the cycle $D^+_c$, and is equal to $\pm 1$ if $c'$ belong to $D^+_c$. Moreover, the sign coincides with that in Fig.~\ref{fig:classical_crossing_sign}. The value $\beta^l_{c}(D^+_c)$ is equal either $-1+1=0$ or $0+0=0$. Thus, $ci_D(D^+_c)=\sum_{c'}\beta^l_{c'}(D^+_c)$ is the index $Ind_D(c)$.
\end{proof}

\begin{corollary}\label{cor:index_virtual_and_classical}
The virtual index $vInd_D(c)$ and the index $Ind_D(c)$ coincide.
\end{corollary}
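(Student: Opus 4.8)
The plan is to derive Corollary~\ref{cor:index_virtual_and_classical} directly from the two facts already established just above it: Theorem~\ref{thm:canonical_index_cocycle}, which says $ci_D$ and $vi_D$ are homologous (differ by a coboundary) in $C^1(\tilde D,\Z)$, and Proposition~\ref{prop:index_and_canonical_cocycle}, which identifies $Ind_D(c)$ with $ci_D(D^+_c)$. Since $vInd_D(c)$ is by Definition~\ref{def:virtual_index} just $vi_D(D^+_c)$, the whole statement reduces to showing $vi_D(D^+_c) = ci_D(D^+_c)$, i.e. that the two homologous cocycles agree when evaluated on the $1$-cycle $D^+_c$.

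First I would observe that $D^+_c$ is a genuine $1$-cycle in $\tilde D$: it is the closed loop traced out by following the link strand from one passage through $c$ back to the other (this is exactly the ``positive half'' pictured in Fig.~\ref{fig:knot_halves}), so it defines a class in $H_1(\tilde D,\Z)$. Then I would invoke the standard pairing between cohomology and homology: a coboundary $\delta(\xi)$ evaluates to $0$ on any cycle, because $\langle \delta \xi, z\rangle = \langle \xi, \partial z\rangle = 0$ when $\partial z = 0$. By Theorem~\ref{thm:canonical_index_cocycle} we may write $ci_D = vi_D + \delta(\xi)$ for some $\xi \in C^0(\tilde D,\Z)$; evaluating both sides on the cycle $D^+_c$ kills the coboundary term, giving $ci_D(D^+_c) = vi_D(D^+_c)$. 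Combining with Proposition~\ref{prop:index_and_canonical_cocycle} and Definition~\ref{def:virtual_index} yields $Ind_D(c) = ci_D(D^+_c) = vi_D(D^+_c) = vInd_D(c)$, as desired.

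There is essentially no main obstacle here — the corollary is a formal consequence of the preceding results. The only point that needs a word of care is confirming that $D^+_c$ really is a cycle and not merely a chain; this is geometrically clear since the positive half is a closed strand in the covering graph, but one should state it explicitly so that the cohomology/homology pairing applies. An alternative phrasing, avoiding any mention of cycles, is to note that $ci_D - vi_D = \sum_i \pm\,\delta(c_i)$ is a sum of elementary coboundaries $\delta(c_i)$ as in Fig.~\ref{fig:delta_crossing}, and that each such $\delta(c_i)$ contributes $+1$ and $-1$ (or $0$ and $0$) to the edges of $D^+_c$ entering and leaving the crossing $c_i$, hence sums to zero along the closed arc $D^+_c$ — exactly the same local cancellation argument used in part~3 of Proposition~\ref{prop:vi_property} and in the proof of Proposition~\ref{prop:index_and_canonical_cocycle}. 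Either way the computation is routine, so I would keep the proof to a couple of sentences.
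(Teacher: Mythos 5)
Your argument is correct and is essentially the paper's own proof: the paper also concludes $vInd_D(c)=vi_D(D^+_c)=ci_D(D^+_c)=Ind_D(c)$ directly from Theorem~\ref{thm:canonical_index_cocycle}, Definition~\ref{def:virtual_index} and Proposition~\ref{prop:index_and_canonical_cocycle}, leaving the vanishing of the coboundary on the cycle $D^+_c$ implicit. Your only addition is spelling out that standard point explicitly, which is fine but not a different route.
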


\begin{proof}
Theorem~\ref{thm:canonical_index_cocycle}, Definition~\ref{def:virtual_index} and Proposition~\ref{prop:index_and_canonical_cocycle} imply that $vInd_D(c)=vi_D(D^+_c)=ci_D(D^+_c)=Ind_D(c)$.
\end{proof}

Below we introduce another representative of the virtual index class which does not rely on virtual crossings. Consider the virtual index cocycle $vi_D$ as an element in $C^1(G(D),\Z)$. The Gauss diagram $G=G(D)$ as a graph consists of the oriented chords $c\in Ch(G)$ and the oriented core cycle $CC(G)$.

\begin{proposition}\label{prop:chord_index_cocycle}
Let $D$ be a virtual knot diagram and $G=G(D)$ be its Gauss diagram, and $\iota_D=\iota_{G}\in H^1(G,\Z)$ be the index class. There exists a unique cocycle $i^{ch}_{G}\in  C^1(G,\Z)$ such that $[i^{ch}_G]=\iota_G$ and $i^{ch}_G\mid_{CC(G)}\equiv 0$. Moreover, for any chord $c\in Ch(G)$ we have $i^{ch}_G(c)=Ind_D(c)$.
\end{proposition}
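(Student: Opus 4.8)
The plan is to build the cocycle $i^{ch}_G$ explicitly from the virtual index cocycle $vi_D$ by subtracting off a coboundary chosen to kill the values on the core cycle, then verify uniqueness, and finally identify its values on chords with the indices $Ind_D(c)$.

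First I would recall the combinatorial structure of $G=G(D)$ as a $1$-complex: its $0$-cells are the endpoints of the chords sitting on the core cycle $CC(G)$, its $1$-cells are the chords $Ch(G)$ together with the arcs of $CC(G)$ between consecutive endpoints. Since $CC(G)$ is a single oriented circle, restricting a $1$-cochain to $CC(G)$ and summing its values around the circle defines a homomorphism $H^1(G,\Z)\to\Z$, and on the cocycle level the sum of the values of $vi_D$ over the arcs of $CC(G)$ is exactly $vi_D(D)=0$ by Proposition~\ref{prop:vi_property}(3). Now I would observe that $G$ deformation retracts onto a wedge of circles (one for each chord, roughly speaking, once a spanning tree of the arc-graph is contracted), so $H^1(G,\Z)$ is free; more to the point, there is a direct splitting. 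Pick a base vertex $v_0$ on $CC(G)$; for any cocycle $\alpha$ I can define a $0$-cochain $f$ on the vertices of $CC(G)$ by $f(v)=$ the sum of $\alpha$ over the arcs of $CC(G)$ from $v_0$ to $v$ (well-defined precisely because the total sum around $CC(G)$ is zero when $\alpha$ is homologous to $vi_D$). Then $\alpha-\delta f$ vanishes on every arc of $CC(G)$ by construction, i.e. $(\alpha-\delta f)\mid_{CC(G)}\equiv 0$. Applying this to $\alpha=vi_D$ yields a cocycle $i^{ch}_G:=vi_D-\delta f$ with $[i^{ch}_G]=\iota_G$ and $i^{ch}_G\mid_{CC(G)}\equiv 0$, giving existence.

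For uniqueness, suppose $i_1,i_2$ are two such cocycles. Then $i_1-i_2=\delta g$ for some $0$-cochain $g$, and $\delta g$ vanishes on every arc of $CC(G)$; since $CC(G)$ is connected and contains all $0$-cells of $G$, this forces $g$ to be constant, hence $\delta g=0$ and $i_1=i_2$. (One should note $i^{ch}_G$ is automatically a cocycle on a $1$-complex, so the cocycle condition is vacuous; the content is the normalization on $CC(G)$ together with the cohomology class.)

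For the final claim $i^{ch}_G(c)=Ind_D(c)$, I would use Theorem~\ref{thm:canonical_index_cocycle} and Proposition~\ref{prop:index_and_canonical_cocycle}: the canonical cocycle $ci_D$ also represents $\iota_D$, and one checks directly from its definition (Fig.~\ref{fig:left_right_cochains}) that $ci_D$ already vanishes on the arcs of the core cycle — indeed $\beta^l(c)$ only labels the four short edges incident to $c$, which collapse to the two arc-germs at the endpoint of the chord $c$, and the two labels at each endpoint are $+1$ and $-1$, cancelling along $CC(G)$. Hence $ci_D$ satisfies the same normalization as $i^{ch}_G$, so by uniqueness $i^{ch}_G=ci_D$ as cochains on $G$. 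Then $i^{ch}_G(c)=ci_D(c)$ for a chord $c$, and reading off the chord-value of $\sum_{c'}\beta^l(c')$ one sees only $\beta^l(c)$ itself contributes to the chord $c$ (other $\beta^l(c')$ are supported on core-arcs), and that contribution is precisely the index, matching Proposition~\ref{prop:index_and_canonical_cocycle} under the identification $D^+_c\leftrightarrow$ (chord $c$) $+$ (one arc of $CC(G)$). The main obstacle I anticipate is the bookkeeping in this last paragraph: carefully matching the collapse $G(D)\to\tilde D$ with the supports of the $\beta^l(c')$ and with the signed-half cycle $D^+_c$, so that the equality $ci_D(c)=Ind_D(c)$ comes out with the correct sign; everything else is formal homological algebra on a graph.
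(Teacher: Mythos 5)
Your existence and uniqueness arguments are fine and essentially match the paper's: your potential function $f$ packages into a single coboundary the paper's step-by-step nullification of the core-cycle edges (both hinge on $vi_D(D)=0$ from Proposition~\ref{prop:vi_property}), and your uniqueness argument via ``$\delta g$ vanishes on the connected core cycle, hence $g$ is constant'' is clean and correct.

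The final step, however, contains a genuine error. You claim that the canonical cocycle $ci_D$, pulled back to $G(D)$, vanishes on the arcs of the core cycle and carries the indices on the chords, and you then invoke uniqueness to conclude $i^{ch}_G=ci_D$. This is backwards. The cochains $\beta^l(c)$ are defined on the edges of $\tilde D$, i.e.\ on the long arcs, which under the collapse $p_1\colon G(D)\to\tilde D$ correspond precisely to the core-cycle arcs of $G(D)$; the chords are collapsed to points, so the pullback of \emph{any} cochain from $\tilde D$ -- in particular $ci_D$ -- is identically zero on $Ch(G)$ and is in general nonzero on $CC(G)$. (Compare Fig.~\ref{fig:virtual_index_representatives}, where the chord index cocycle and the canonical cocycle of the virtual trefoil are visibly different cochains, and Proposition~\ref{prop:index_and_canonical_cocycle}, whose computation of $ci_D(D^+_c)$ as a sum over long arcs would be vacuous if $ci_D$ vanished there.) So $ci_D$ and $i^{ch}_G$ are two \emph{different} representatives of $\iota_G$, supported on complementary parts of the graph, and the uniqueness clause cannot identify them. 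The correct route -- which your own construction of $f$ already contains implicitly -- is to evaluate on the cycle $\hat\gamma^+_c=\gamma^+_c\cup c$: cohomologous cocycles agree on cycles, $i^{ch}_G(\hat\gamma^+_c)=i^{ch}_G(c)$ because $i^{ch}_G$ vanishes on the core, while $vi_D(\hat\gamma^+_c)=vi_D(\gamma^+_c)=vi_D(D^+_c)=Ind_D(c)$ because $vi_D$ vanishes on chords and by Corollary~\ref{cor:index_virtual_and_classical}. Equivalently, your formula $i^{ch}_G(c)=f(v_1)-f(v_2)$ computes the sum of $vi_D$ over one of the two core arcs bounded by the endpoints of $c$, which is exactly $vi_D(D^+_c)$; carrying that computation out (with attention to the orientation of the chord) would repair the proof.
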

\begin{proof}
Let us show first that such cocycle $i^{ch}$ exists. Take any virtual index cocycle $vi_D\in C^1(G,\Z)$. Let $e$ be an edge of the core cycle with an end $c_+$, and $vi_D(e)=x$, see Fig.~\ref{fig:cocycle_reduction}. By adding the coboundary $x\delta c_+$, we can nullify the edge $e$ so that the value of the preceding edge changes. Repeating this operation, we get a cocycle $\alpha$ with all the edges of the core cycle nullified except may be one edge $e'$. But the nullification operation does not change the sum $\sum_{e\in CC(G)}vi_D(e)$ and the initial sum is zero by Proposition~\ref{prop:vi_property}. Thus, $0=\sum_{e\in CC(G)}\alpha(e)=\alpha(e')$, so $\alpha\mid_{CC(G)}\equiv 0$.

\begin{figure}[h]
\centering\includegraphics[width=0.5\textwidth]{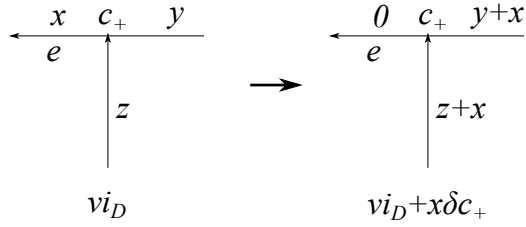}
\caption{Nullification of an edge in a virtual index cocycle}\label{fig:cocycle_reduction}
\end{figure}

Let $c\in Ch(G)$ be a chord. The ends of the chord splits the core cycle into two arcs $\gamma^+_c$ and $\gamma^-_c$, see Fig.~\ref{fig:gauss_halves}. Let $\hat\gamma^+_c$ be the cycle which is the union of the arc $\gamma^+_c$ and the chord $c$. The projection from $G$ to the diagram $D$ maps $\gamma^+_c$ and $\hat\gamma^+_c$ to the half $D^+_c$. Hence, $vi_D(\gamma^+_c)= vi_D(D^+_c)=Ind_D(c)$. Since $vi_D$ vanishes on any chord in $G$, $vi_D(\hat\gamma^+_c)=vi_D(\gamma^+_c)+vi_D(c)=vi_D(\gamma^+_c)=Ind(c)$. Adding a coboundary does not change the value of a cocycle on a cycle, so we have $\alpha(\hat\gamma^+_c)=Ind_D(c)$. But $\alpha(\hat\gamma^+_c)=\alpha(\gamma^+_c)+\alpha(c)=\alpha(c)$ because $\alpha$ is zero in the core cycle. Thus, $\alpha(c)=Ind_D(c)$. The values on the chords determine uniquely the cocycle $\alpha$.

\begin{figure}[h]
\centering\includegraphics[width=0.3\textwidth]{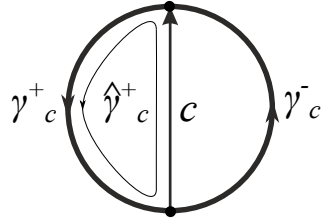}
\caption{The cycle $\hat\gamma^+_c$}\label{fig:gauss_halves}
\end{figure}
\end{proof}

\begin{definition}\label{def:chord_index_cocycle}
We call the cocycle $i^{ch}_G$ the {\em chord index cocycle}.
\end{definition}

\begin{example}
The virtual, canonical and chord index cocycles of a diagram of the virtual trefoil is shown in Fig.~\ref{fig:virtual_index_representatives}.
\begin{figure}[h]
\centering\includegraphics[width=0.5\textwidth]{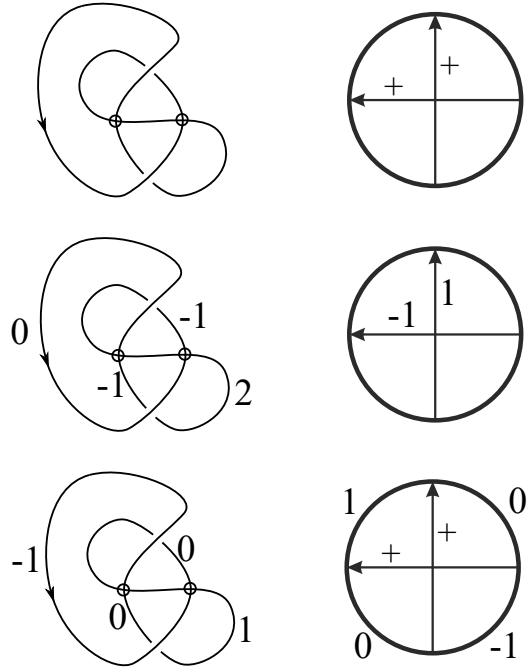}
\caption{A diagram of the virtual trefoil, its Gauss diagram (top row), the virtual index cocycle (middle left), the chord index cocycle (middle right), and the canonical index cocycle (bottom row)}\label{fig:virtual_index_representatives}
\end{figure}
\end{example}

\subsection{Wriggle number}

Let $D=D_1\cup\cdots\cup D_n$ be an oriented virtual link diagram with $n$ unicursal components $D_1,\dots,D_n$.

Let $c$ is be a {\em self-crossing} of $D$, i.e. a self-intersection of some unicursal component $D_i$ of $D$. Like in the knot case, the oriented smoothing in the crossing $c$ splits the component $D_i$ into two halves $D^l_c$ and $D^r_c$, see Fig.~\ref{fig:knot_halves}. Then we can define the index $Ind_D(c)$ and the virtual index $vInd_D(c)$ of the self-crossing $c$ as in Definitions~\ref{def:virtual_index} and~\ref{def:index}. Proposition~\ref{prop:index_and_canonical_cocycle} and Corollary~\ref{cor:index_virtual_and_classical} remain true for self-crossings of virtual links.

Proposition~\ref{prop:chord_index_cocycle} is not valid for link diagrams. The reason is in general case we don't have the equality $vi_D(D_i)=0$ for a unicursal component $D_i$ of the diagram $D$, so we can not nullify the cocycle on the corresponding core cycle of the Gauss diagram. This obstruction is regulated by the wriggle numbers of the link.

\begin{definition}[\cite{Folwaczny2013}]\label{def:wriggle_number}
Let $D=D_1\cup\cdots\cup D_n$ be an oriented virtual link diagram with $n$ unicursal components $D_1,\dots,D_n$. For any component $D_i$ define its {\em over linking number} $\lk^o_D(D_i)$ and {\em under linking number} $\lk^u_D(D_i)$ as the sums over all classical crossings with $D_i$:
\begin{equation}\label{eq:over_under_linking_number}
\lk^o_D(D_i)=\sum_{c\colon D_i\mbox{\scriptsize\ is overcrossing}}\sign(c),\qquad \lk^u_D(D_i)=\sum_{c\colon D_i\mbox{\scriptsize\ is undercrossing}}\sign(c).
\end{equation}
The {\em wriggle number} of the component $D_i$ is the difference
\begin{equation}\label{eq:wriggle_number}
w_D(D_i)=\lk^o_D(D_i)-\lk^u_D(D_i).
\end{equation}
\end{definition}

The over and under linking numbers are link invariants, so is the wriggle number~\cite{Folwaczny2013}.

\begin{proposition}\label{prop:wriggle_number_index_cocycle}
Let $\iota_D$ be the index class of the diagram $D$. Then $w_D(D_i)=-\iota_D(D_i)$.
\end{proposition}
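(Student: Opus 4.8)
The plan is to evaluate $\iota_D$ on the $1$-cycle carried by $D_i$ using a representative that involves only classical crossings. Since $\iota_D(D_i)$ is the value of a cohomology class on a homology class, it is independent of the representative cocycle, so by Theorem~\ref{thm:canonical_index_cocycle} we may compute
$$\iota_D(D_i)=ci_D(D_i)=\sum_{c}\beta^l(c)(D_i),$$
the sum being over the classical crossings $c$ of $D$, with $D_i$ oriented by the link orientation (so that every long arc of $D_i$ is traversed positively). It then remains to read off the contribution of each crossing from Fig.~\ref{fig:left_right_cochains}.

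First I would dispatch the trivial cases. If no strand through $c$ lies on $D_i$, then all edges of $D_i$ avoid the support of $\beta^l(c)$ and the contribution is $0$. If $c$ is a self-crossing of $D_i$, then all four edges at $c$ belong to $D_i$, and the four labels of $\beta^l(c)$ sum to $0$ (this is exactly the cancellation already used in the proof of Proposition~\ref{prop:index_and_canonical_cocycle}, where the two edges of $D^+_c$ meeting $c$ contribute $-1+1$ or $0+0$); so a self-crossing contributes $0$. This matches the wriggle side: a self-crossing enters both $\lk^o_D(D_i)$ and $\lk^u_D(D_i)$ with the same sign and cancels in $w_D(D_i)$.

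The essential case is a crossing $c$ between $D_i$ and another component $D_j$, $j\ne i$. Exactly one of the two strands through $c$ lies on $D_i$, and on its two edges $\beta^l(c)$ carries one label $0$ and one label $\pm1$. The crucial point is that, comparing the orientation rule defining $\beta^l(c)$ in Fig.~\ref{fig:left_right_cochains} with the crossing-sign convention of Fig.~\ref{fig:crossing_sign} and the over/under designation of Definition~\ref{def:wriggle_number}, this value is $-\sign(c)$ when $D_i$ carries the overcrossing at $c$ and $+\sign(c)$ when $D_i$ carries the undercrossing. A built-in consistency check: $\beta^l(c)$ does not see which strand is on top (Fig.~\ref{fig:left_right_cochains}, caption), whereas exchanging over and under at $c$ reverses $\sign(c)$, so the two subcases are forced to be compatible. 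Summing over all crossings and using that self-crossings drop out of $\lk^o_D(D_i)-\lk^u_D(D_i)$,
$$ci_D(D_i)=-\sum_{c\colon D_i\ \text{over}}\sign(c)+\sum_{c\colon D_i\ \text{under}}\sign(c)=-\bigl(\lk^o_D(D_i)-\lk^u_D(D_i)\bigr)=-w_D(D_i),$$
which is the claim.

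I expect the only genuine difficulty to be the sign bookkeeping in the essential case: one must keep straight the orientation-only labelling rule for $\beta^l(c)$ against the two independent sign conventions (Fig.~\ref{fig:crossing_sign} and Definition~\ref{def:wriggle_number}) through the four orientation configurations at a crossing; the rest is routine. An alternative, essentially equivalent route is to work with $vi_D$ directly: $vi_D(D_i)=\sum_{e\subset D_i}vi_D(e)$ is the signed count of virtual crossings between $D_i$ and the other components (self-virtual-crossings cancel in pairs, exactly as in Proposition~\ref{prop:vi_property}), and one then identifies this count with $\lk^u_D(D_i)-\lk^o_D(D_i)$ by passing to the left virtual double $VD(D)$ of Theorem~\ref{thm:canonical_index_cocycle}; but the computation with $\beta^l(c)$ above is the shorter path.
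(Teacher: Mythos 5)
Your proof is correct and follows the same route as the paper: take the canonical cocycle $ci_D$ as the representative of $\iota_D$ and compare, crossing by crossing, the contribution of $\beta^l(c)$ to $ci_D(D_i)$ with the contribution to $w_D(D_i)$. You merely spell out the case analysis (irrelevant, self-, and mixed crossings) that the paper compresses into one sentence, and your sign conclusions in each case agree with the paper's claim that the two contributions are opposite.
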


\begin{proof}
We take the canonical cocycle as a representative of $\iota_D$. Then $\iota_D(D_i)=ci_D(D_i)$. For each crossing $c$ in $D_i$ its contribution to $ci_D(D_i)$ (see Fig.~\ref{fig:left_right_cochains}) is opposite to the contribution to $w_D(D_i)$. Thus, $ci_D(D_i)=-w_D(D_i)$.
\end{proof}

Let $D_i$ and $D_j$ be two components of $D$. Consider the wriggle number of the components $D_i$ and $D_j$: $w(D_i,D_j)=w_{D_i\cup D_j}(D_i)$. From Definition~\ref{def:wriggle_number} and Proposition~\ref{prop:wriggle_number_index_cocycle} it follows

\begin{proposition}
\begin{enumerate}
\item $w(D_i,D_j)=-\iota_{D_i\cup D_j}(D_i)$;
\item $w(D_i,D_j)$ is an invariant of links with numbered components;
\item $w(D_j,D_i)=-w(D_i,D_j)$;
\item $w_D(D_i)=\sum_{j=1}^n w(D_i,D_j)$.
\end{enumerate}
\end{proposition}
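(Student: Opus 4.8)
\emph{Proof proposal.} The plan is to reduce all four items to the linking-number description of the wriggle number (Definition~\ref{def:wriggle_number}) together with Proposition~\ref{prop:wriggle_number_index_cocycle}, and to isolate one elementary bookkeeping observation that does most of the work. Item (1) is immediate: by definition $w(D_i,D_j)=w_{D_i\cup D_j}(D_i)$, and Proposition~\ref{prop:wriggle_number_index_cocycle}, applied to the two-component diagram $D_i\cup D_j$, gives $w_{D_i\cup D_j}(D_i)=-\iota_{D_i\cup D_j}(D_i)$.

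The observation I would use for items (3) and (4) is that a \emph{self}-crossing of a component contributes its sign to \emph{both} $\lk^o$ and $\lk^u$ of that component, hence contributes $0$ to its wriggle number; only crossings between two distinct components matter. In particular $w(D_i,D_i)=w_{D_i}(D_i)=0$ (consistently also with Proposition~\ref{prop:vi_property}, part 3). For item (4), split the classical crossings incident to $D_i$ in the full diagram $D$ into self-crossings of $D_i$, which contribute $0$ to $w_D(D_i)$, and mixed crossings between $D_i$ and some $D_k$ with $k\ne i$. Passing from $D$ to the sublink $D_i\cup D_k$ changes neither the sign of such a crossing nor its over/under pattern, so its contribution to $w_D(D_i)$ equals its contribution to $w(D_i,D_k)=w_{D_i\cup D_k}(D_i)$. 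Summing over all incident crossings yields $w_D(D_i)=\sum_{k\ne i}w(D_i,D_k)=\sum_{j=1}^n w(D_i,D_j)$, the last step because $w(D_i,D_i)=0$.

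For item (3) I would compare $w(D_i,D_j)$ and $w(D_j,D_i)$ crossing by crossing in the link $D_i\cup D_j$: by the observation above only mixed $D_i$--$D_j$ crossings contribute, and at such a crossing $c$ exchanging the roles of the over- and under-strand exchanges the roles of $D_i$ and $D_j$, so $c$ adds $\sign(c)$ to one of $w(D_i,D_j),w(D_j,D_i)$ and $-\sign(c)$ to the other. Hence $w(D_j,D_i)=-w(D_i,D_j)$. As a cross-check this also follows homologically: by item (1) and Proposition~\ref{prop:vi_property}, part 3, $w(D_i,D_j)+w(D_j,D_i)=-\iota_{D_i\cup D_j}(D_i)-\iota_{D_i\cup D_j}(D_j)=-[vi_{D_i\cup D_j}](D_i\cup D_j)=0$.

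Finally, item (2): since $w(D_i,D_j)$ depends only on the ordered sublink $D_i\cup D_j$, and a classical or virtual Reidemeister move performed on $D$ (with its components left numbered, hence unpermuted) restricts to a Reidemeister move, or to no change at all, on that sublink, invariance follows from the invariance of the over- and under-linking numbers noted after Definition~\ref{def:wriggle_number} (equivalently: $R_1$ only creates a self-crossing, $R_2$ creates a canceling pair of crossings of equal type, and $R_3$ together with the virtual moves leave all crossing data unchanged). I do not expect a genuine obstacle here; the only points requiring care are the self-crossing bookkeeping in the second and third paragraphs and the routine claim that a move on $D$ restricts to a move on the sublink.
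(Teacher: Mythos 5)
Your proposal is correct and follows exactly the route the paper intends: the paper gives no proof, asserting the proposition "follows from Definition~\ref{def:wriggle_number} and Proposition~\ref{prop:wriggle_number_index_cocycle}," and your write-up is precisely that derivation made explicit (item (1) from Proposition~\ref{prop:wriggle_number_index_cocycle}, items (3)--(4) from the observation that self-crossings cancel in $\lk^o-\lk^u$ so only mixed crossings contribute, item (2) from invariance of the over/under linking numbers). The sign bookkeeping in your item (3) and the cross-check via $vi_{D_i\cup D_j}(D_i\cup D_j)=0$ are both sound.
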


\begin{example}
Let $D=D_1\cup D_2$ is the virtual Hopf link, see Fig.~\ref{fig:virtual_hopf}. Then $w(D_1,D_2)=1$, $w(D_2,D_1)=-1$.

\begin{figure}[h]
\centering\includegraphics[width=0.25\textwidth]{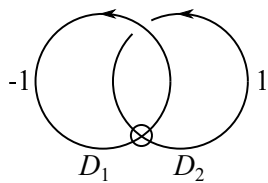}
\caption{Virtual Hopf link. The edges are labeled with the index cocycle}\label{fig:virtual_hopf}
\end{figure}
\end{example}


\subsection{Diagram smoothings}

Let $D$ be a virtual link diagram and $\X(D)$ be the set of classical crossings of $D$. For any crossing $c\in\X(D)$ define the {\em smoothings} of the diagram $D$ as shown in Fig.~\ref{fig:crossing_smoothings}.

\begin{figure}[h]
\centering\includegraphics[width=0.4\textwidth]{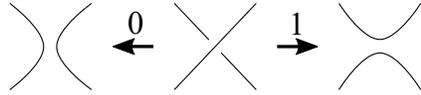}
\caption{Smoothings of a crossing}\label{fig:crossing_smoothings}
\end{figure}

Any map $s\colon\X(d)\to\{0,1\}$ determines smoothings in each crossing of $D$. The result of the smoothings is a diagram $D_s$ without classical crossings, see Fig.~\ref{fig:virtual_trefoil_with_state}. The diagram $D_s$ is called a {\em Kauffman state} of the diagram $D$.

\begin{figure}[h]
\centering\includegraphics[width=0.5\textwidth]{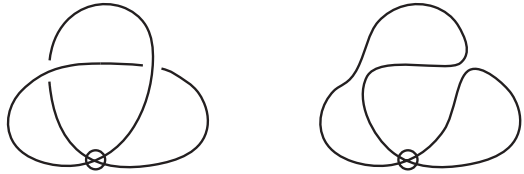}
\caption{Virtual trefoil and its Kauffman state}\label{fig:virtual_trefoil_with_state}
\end{figure}

Let $\Gamma(s)$ is the set of unicursal components of the state $D_s$. Any component $\gamma\in\Gamma(s)$ can be considered as a cycle in the covering graph $\tilde D$, so we apply the index cocycle $vi_D$ to it. Since $\gamma$ is a cycle, for any representative $\alpha\in\iota_D$ of the index class the value $\alpha(\gamma)$ is equal $vi_D(\gamma)$.

\begin{proposition}\label{prop:smoothing_cocycle_property}
Let $D_s$ be a Kauffman state of a virtual link diagram, and $\gamma\in\Gamma(s)$ be a component of it. Then $vi_D(\gamma)=0$.
\end{proposition}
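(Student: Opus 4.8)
The plan is to show that each component $\gamma$ of a Kauffman state bounds on both sides inside the diagram in a way that forces $vi_D(\gamma)=0$. The cleanest route uses the virtual double construction from the proof of Theorem~\ref{thm:canonical_index_cocycle} together with Proposition~\ref{prop:smoothing_cocycle_property}'s siblings already in the paper: recall that $ci_D$ represents $\iota_D$, so $vi_D(\gamma)=ci_D(\gamma)$ for the state component $\gamma$ viewed as a cycle in $\tilde D$. Hence it suffices to compute $ci_D(\gamma)$, which depends only on how $\gamma$ turns at each classical crossing of $D$. So first I would fix a crossing $c$ and describe the local picture: the state component $\gamma$ passes through the small disk around $c$ along one of the two smoothing strands (the $0$-smoothing or the $1$-smoothing), and I would record, from Fig.~\ref{fig:crossing_smoothings} and Fig.~\ref{fig:left_right_cochains}, which of the four edges incident to $c$ appear in $\gamma$ and with which local orientation.

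The key computation is then a crossing-by-crossing bookkeeping of the contribution $\beta^l(c')$ to $ci_D(\gamma)$. For a crossing $c'$ not lying on $\gamma$, none of the relevant edges appear and the contribution is $0$. For a crossing $c'$ on $\gamma$: since $\gamma$ is obtained by an oriented-or-disoriented smoothing at $c'$, the component enters and leaves the disk around $c'$ along exactly two of the four incident edges, and those two edges are the two arcs of a single smoothing strand. The point is that in Fig.~\ref{fig:left_right_cochains} the cochain $\beta^l(c')$ puts labels $+1$ and $-1$ on the two edges lying on one side of $c'$ and $0$ on the other two; when $\gamma$ runs along a smoothing strand it picks up either both of the nonzero edges (total $+1-1=0$) or neither of them (total $0$), exactly as in the last line of the proof of Proposition~\ref{prop:index_and_canonical_cocycle}. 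One has to check this for each of the two smoothing types and each local orientation pattern, but it is the same finite case analysis done there; in every case the local contribution of $c'$ to $ci_D(\gamma)$ is $0$. Summing over all $c'\in\X(D)$ gives $ci_D(\gamma)=0$, hence $vi_D(\gamma)=0$.

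An equivalent and perhaps more transparent argument avoids $ci_D$ and works directly with the virtual double $VD(D)$: a state component $\gamma$ of $D$ corresponds to a closed curve in $VD(D)$ (a Kauffman state of $VD(D)$ restricted appropriately), and because the virtual double runs a parallel pure-virtual copy alongside each arc, every virtual crossing met by $\gamma$ is met together with its oppositely-signed partner, so the signed count of virtual crossings along $\gamma$ telescopes to zero; since $vi_{VD(D)}=ci_D$ is a representative of $\iota_D$ and $\gamma$ is a cycle, this again gives $\iota_D(\gamma)=0$ and therefore $vi_D(\gamma)=0$. I would present the first version as the main proof and mention the second as a remark.

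The main obstacle I expect is purely organizational rather than mathematical: making sure the local case analysis at a crossing $c'$ genuinely covers all combinations of (i) which smoothing ($0$ or $1$) the state performs at $c'$, (ii) whether $\gamma$ is the left or right local strand, and (iii) the four link-orientation patterns at $c'$ that already appeared in Proposition~\ref{prop:left_right_canonical_cocycles} — and verifying that in each the two nonzero labels of $\beta^l(c')$ either both lie on $\gamma$ or both miss it. This is exactly the mechanism behind $\beta^l_c(D^+_c)=0$ in Proposition~\ref{prop:index_and_canonical_cocycle}, so I would lean on a figure (analogous to Fig.~\ref{fig:left_right_cochains_equality}) rather than enumerate the cases in prose.
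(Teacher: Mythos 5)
Your reduction of the statement to $ci_D(\gamma)=0$ is legitimate: $\gamma$ is a cycle in $\tilde D$, and $vi_D$ and $ci_D$ are cohomologous by Theorem~\ref{thm:canonical_index_cocycle}, so they agree on $\gamma$. The gap is in the crossing-by-crossing bookkeeping that follows. Your key claim --- that when $\gamma$ runs along a smoothing strand at a crossing $c'$ it picks up either both nonzero labels of $\beta^l(c')$ or neither --- is true only for the \emph{oriented} smoothing. The cochain $\beta^l(c')$ is supported on the two edges bounding the left quadrant, that is, on the incoming edge of one strand and the outgoing edge of the other strand; the oriented smoothing joins exactly these two edges into one strand (and the two unlabelled edges into the other), but the disoriented smoothing joins the two incoming edges together and the two outgoing edges together, so each of its strands carries exactly one nonzero label and contributes $\pm1$, not $0$. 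This is also why the analogy with the last line of the proof of Proposition~\ref{prop:index_and_canonical_cocycle} does not transfer: there the cycle $D^+_c$ is produced by the oriented smoothing at $c$ and at every other crossing $c'$ follows the original strands, picking up $\pm1$ --- which is exactly why $Ind_D(c)$ is nonzero in general. A concrete counterexample to your local claim: in the standard two-crossing Hopf link diagram, take the state with the disoriented smoothing at both crossings; each resulting circle contains one local strand at each crossing and receives $+1$ from one crossing and $-1$ from the other. The global sum is still $0$, but not for the local reason you give.

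So the cancellation you need is global, not local, and an additional idea is required to pair up the $\pm1$ contributions coming from disoriented smoothings (their number along $\gamma$ is even, since each one reverses the compatibility of $\gamma$ with the orientation of $D$, but one must still match the signs). The paper sidesteps all of this with a one-line topological argument: $D_s$ is a plane diagram whose only crossings are virtual, the sign rule of Fig.~\ref{fig:virtual_crossing_sign} makes $vi_D(\gamma)$ equal to the plane intersection number $\gamma\cdot D_s$, and intersection numbers of cycles in $\R^2$ vanish. Your second, ``virtual double'' argument is closer in spirit to this, but as written it has the same defect: the virtual crossings of $VD(D)$ that the parallel copy creates near a \emph{classical} crossing of $D$ are precisely the ones that do not pair off locally along $\gamma$ (they are what produce the labels $\beta^l(c)$ in the first place), so ``the signed count telescopes to zero'' again needs a global justification.
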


\begin{proof}
The smoothed diagram $D_s$ is a plane diagram with only virtual crossing. Since the cochain $vi_D$ is defined by the rule in Fig.~\ref{fig:virtual_crossing_sign}, the value $vi_D(\gamma)$ coincides with the intersection number $\gamma\cdot D_s$. But the intersection number of any two cycles in $\R^2$ is zero.
\end{proof}

Proposition~\ref{prop:smoothing_cocycle_property} can be reformulated as a cocycle condition on some closed surface. Recall that the {\em atom}~\cite{Manturov2000} or {\em abstract link diagram}~\cite{KK2000} can be constructed for a virtual link diagram $D$ as follows. Consider a two-dimensional surface with boundary immersed in the neighbourhood of the diagram $D$ as shown in Fig.~\ref{fig:atom}. Then glue discs to the boundary of this surface. The obtained closed surface $A(D)$ is the atom of the diagram $D$.

\begin{figure}[h]
\centering\includegraphics[width=0.4\textwidth]{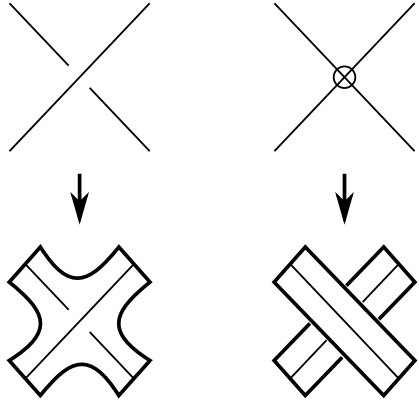}
\caption{Construction of the atom}\label{fig:atom}
\end{figure}

The covering graph $\tilde D$ embeds naturally in the surface $A(D)$ as its $1$-skeleton, so we can consider the index cocycle $vi_D$ as a $1$-cochain on $A(D)$.

\begin{corollary}\label{prop:atom_cocycle_property}
The virtual index cochain $vi_D\in C^1(A(D),\Z)$ is a genuine cocycle, i.e. for any $2$-cell $e\in A(D)$ we have $vi_D(\partial e)=0$.
\end{corollary}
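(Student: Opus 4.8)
The plan is to deduce this corollary directly from Proposition~\ref{prop:smoothing_cocycle_property} by identifying the $2$-cells of the atom $A(D)$ with Kauffman-state components. Recall how the atom is built: one takes a regular neighbourhood of the diagram $D$, which near each classical crossing looks like the standard cross-shaped piece in Fig.~\ref{fig:atom}; this surface-with-boundary is then capped off with discs. The boundary curves of the neighbourhood surface are exactly the components of one particular Kauffman state — the state in which every classical crossing is given the smoothing compatible with the chosen framing of the cross-piece (the ``$A$-smoothing'' or ``$B$-smoothing'' as prescribed by the atom construction). Thus the set of $2$-cells of $A(D)$ is in natural bijection with $\Gamma(s_0)$ for this distinguished state $s_0$, and the boundary $\partial e$ of the $2$-cell corresponding to $\gamma \in \Gamma(s_0)$, viewed as a cycle in the $1$-skeleton $\tilde D \subset A(D)$, is precisely that component $\gamma$ regarded as a cycle in the covering graph.

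With this identification in hand the argument is immediate: for each $2$-cell $e$ of $A(D)$ let $\gamma_e \in \Gamma(s_0)$ be the corresponding state component, so that $\partial e = \gamma_e$ in $C_1(\tilde D, \Z)$. Then $vi_D(\partial e) = vi_D(\gamma_e)$, and Proposition~\ref{prop:smoothing_cocycle_property} (applied to the state $s_0$) gives $vi_D(\gamma_e) = 0$. Since this holds for every $2$-cell, $vi_D$ satisfies the cocycle condition on $A(D)$, i.e.\ $\delta(vi_D) = 0$ in $C^2(A(D),\Z)$, which is exactly the assertion.

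The one point that needs a little care — and which I expect to be the main (though minor) obstacle — is making precise that the embedding $\tilde D \hookrightarrow A(D)$ as the $1$-skeleton is such that the attaching map of each disc traverses the corresponding state component once, with the correct edge multiplicities, so that $\partial e$ really equals the state-component cycle and not some other chain on the same edges. This is a matter of unwinding the local picture at each classical crossing in Fig.~\ref{fig:atom}: the disc glued to a boundary circle runs along the long arcs (edges of $\tilde D$) exactly as the smoothed component does, and each edge is traversed by at most one disc (indeed each edge lies on the boundary of exactly two discs, counted with the two sides, but on the boundary of a single $2$-cell it appears with coefficient consistent with the smoothing). Once this bookkeeping is checked against the construction, the corollary follows with no further computation. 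I would also remark explicitly that any representative $\alpha \in \iota_D$ could be used in place of $vi_D$ in the statement, since $\alpha(\gamma) = vi_D(\gamma)$ for cycles $\gamma$, as already noted before Proposition~\ref{prop:smoothing_cocycle_property}.
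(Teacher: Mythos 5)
Your argument is exactly the paper's: its proof is the one line ``the boundary of any $2$-cell is a component of some smoothing $D_s$,'' followed by an appeal to Proposition~\ref{prop:smoothing_cocycle_property}. One small correction to your bookkeeping: the $2$-cells of $A(D)$ are not all components of a single distinguished state $s_0$, but of the two extreme states (all-$0$ and all-$1$) --- each edge of $\tilde D$ is adjacent to one cell from each family --- which changes nothing, since the proposition applies to every Kauffman state.
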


\begin{proof}
Indeed, the boundary of any $2$-cell is a component of some smoothing $D_s$.
\end{proof}

\section{Invariants of virtual knots}\label{sect:virtual_invariants}

Below we mention two virtual link invariants which involve virtual crossings in their definition, and we show how these invariants can be reformulated without virtual crossings. The first example is the virtual intersection index polynomial~\cite{ILL} that is eqivalent to the index polynomial~\cite{Henrich2010,Cheng2013}. The other example is the virtual Alexander quandle~\cite{Manturov2002, Manturov2003,KauffmanManturov2005}.  The corresponding Alexander polynomial coincides with the polynomial induced by the Alexander biquandle~\cite{KauffmanManturov2005,Saw,SW}.

\subsection{Virtual index polynomial}

Let $D=D_1\cup\cdots\cup D_n$ be an oriented virtual link diagram with $n$ unicursal components $D_1,\dots,D_n$. Let $S(D)$ be the set of classical self-crossings of $D$, and $M(D)$ be the set of classical mixed crossings, i.e. intersections of different components of $D$. Following~\cite{IKP}, define the {\em virtual intersection index} of a self-crossing $c$ of a component $D_i$  as $vi_{D_i}(D^+_c)=vInd_{D_i}(c)$. If $c$ is a mixed crossing, i.e. an intersection of components $D_i$ and $D_j$, and $D_i$ is overcrossing in $c$, then the {\em virtual intersection index} is defined as $i(c)=vi_{D_i\cup D_j}(D_i)$. Then $i(c)=-w(D_i,D_j)$.

Consider the polynomial invariant $Q_D(x,y)$ defined in~\cite{IKP}:
$$
Q_D(x,y)=\sum_{c\in S(D)}\sign(c)(x^{i(c)}-1)+\sum_{c\in M(D)}\sign(c)(y^{i(c)}-1).
$$
We can reformulate it using the index cocycle as follows.

\begin{multline*}
Q_D(x,y)=\sum_{i=1}^n\sum_{c\in S(D_i)}\sign(c)x^{\iota_{D_i}(D^+_c)}+\sum_{i=1}^n\sum_{j=1}^n \lk^o_{D_i\cup D_j}(D_i)y^{\iota_{D_i\cup D_j}(D_i)}\\-writhe(D)=\sum_{i=1}^n\sum_{c\in S(D_i)}\sign(c)x^{Ind_{D_i}(c)}+\sum_{i=1}^n\sum_{j=1}^n \lk^o_{D_i\cup D_j}(D_i)y^{-w(D_i, D_j)}\\-writhe(D)
\end{multline*}
where $writhe(D)$ is the sum of signs of all the classical crossings of $D$.

Thus, we have got a known result~\cite{ILL} that the virtual intersection index polynomial coincide with the index polynomial.

\subsection{Virtual biquandle}

\begin{definition}\label{def:biquandle}
A set $B$ with two binary operations $\circ,\ast\colon B\times B\to B$ is called a {\em biquandle}~\cite{Nelson2015} if it obeys the following conditions:
\begin{enumerate}
\item $x\circ x=x\ast x$ for any $x\in B$;
\item for any $y\in B$ the operators $\alpha_y\colon B\to B, x\mapsto x\circ y$, and $\beta_y\colon B\to B, x\mapsto x\ast y$, are invertible;
\item the map $S\colon B\times B\to B\times B$, $(x,y)\mapsto (x\circ y, y\ast x)$, is a bijection;
\item for any $x,y,z\in B$
\begin{gather*}
(x\circ z)\circ(y\circ z)=(x\circ y)\circ(z\ast y),\\
(x\circ z)\ast(y\circ z)=(x\ast y)\circ(z\ast y),\\
(x\ast z)\ast(y\ast z)=(x\ast y)\ast(z\circ y).
\end{gather*}
\end{enumerate}

 A {\em virtual biquandle}~\cite{KauffmanManturov2005} is a pair $(B,f)$ where $B$ is a biquandle and $f\colon B\to B$ is an automorphism of $B$, i.e. $f(x\circ y)=f(x)\circ f(y)$ and  $f(x\ast y)=f(x)\ast f(y)$ for any $x,y\in B$.
\end{definition}

\begin{definition}
Let $D$ be a diagram of an oriented virtual link and $B$ be a biquandle. A {\em colouring of the diagram $D$ with the biquandle $B$} is a map from the set of long arcs of $D$ to $B$ such that the images of the arcs (colours) satisfy at each classical crossing the relations in Fig.~\ref{fig:biquandle_coloring}. Denote the set of colourings by $Col_B(D)$.
\begin{figure}[h]
\centering\includegraphics[width=0.4\textwidth]{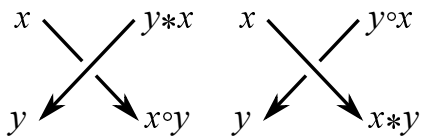}
\caption{The colouring rule}\label{fig:biquandle_coloring}
\end{figure}

Let $(B,f)$ be a virtual biquandle. Then a {\em coloring of the diagram $D$ with the virtual biquandle $(B,f)$} is a map from the set of short arcs of $D$ (whose ends are classical or virtual crossings) to $B$ such that the images of the arcs (colours) satisfy the relations in Fig.~\ref{fig:biquandle_coloring} at each classical crossing and the relation in Fig.~\ref{fig:virtual_biquandle_coloring} at each virtual crossing. Let $Col_{(B,f)}(D)$ be the set of colourings.

\begin{figure}[h]
\centering\includegraphics[width=0.2\textwidth]{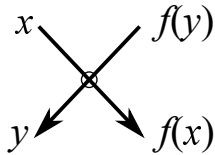}
\caption{The virtual colouring rule}\label{fig:virtual_biquandle_coloring}
\end{figure}
\end{definition}

\begin{remark}\label{rem:biquandle_reidemeister}
Given a biquandle, Reidemeister moves on virtual diagrams induce bijections of the sets of colourings of the diagrams with the biquandle. Thus, the number of colourings is a virtual link invariant. The same is valid for virtual biquandles.
\end{remark}

\begin{example}[Alexander biquandle]
Let $D$ be a diagram of an oriented virtual link. Let $ABQ(D)$ be a module over the ring $A=\Z[t,t^{-1},s,s^{-1}]$ whose generators are the long arcs of the diagram $D$, and the relations appear at the classical crossings of $D$ as shown in Fig.~\ref{fig:alexander_biquandle}. Then $ABQ(D)$ is a biquandle with operations $x\circ y=s^{-1}tx+s^{-1}(1-t)y$ and $x\ast y =s^{-1}x$, and its definition determines a tautological colouring of the diagram $D$ which maps any long arc to the corresponding generator of $ABQ(D)$.  The Fitting ideal of the module $ABQ(D)$ is generated by a polynomial $G_D(s,t)$ which is called the {\em generalized Alexander polynomial} of $D$. The polynomial $G_D(s,t)$ is an invariant of virtual links which vanishes on classical links~\cite{KauffmanManturov2005,Saw,SW}.

\begin{figure}[h]
\centering\includegraphics[width=0.5\textwidth]{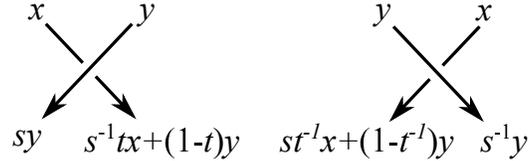}
\caption{Alexander biquandle relation}\label{fig:alexander_biquandle}
\end{figure}
\end{example}

\begin{remark}
Our notation differs from the one in~\cite[Section 2.3]{KauffmanManturov2005} by the variable change $t\mapsto st$.
\end{remark}

\begin{example}[Alexander virtual quandle]
Let $D$ be a diagram of an oriented virtual link and $VAQ(D)$ be the module over the ring $A=\Z[t,t^{-1},s,s^{-1}]$ whose generators are the short arcs of the diagram $D$, and the relations correspond to classical and virtual crossings of $D$ as shown in Fig.~\ref{fig:alexander_quandle}. The module $VAQ(D)$ has a virtual biquandle structure with operations $x\circ y=tx+(1-t)y$, $x\ast y=x$ and $f(x)=sx$. It also has a tautological colouring of the diagram $D$. The generator $\xi(D)$ of the Fitting ideal of the module $VAQ(D)$ is a polynomial invariant of virtual links~\cite{Manturov2003}.

\begin{figure}[h]
\centering\includegraphics[width=0.5\textwidth]{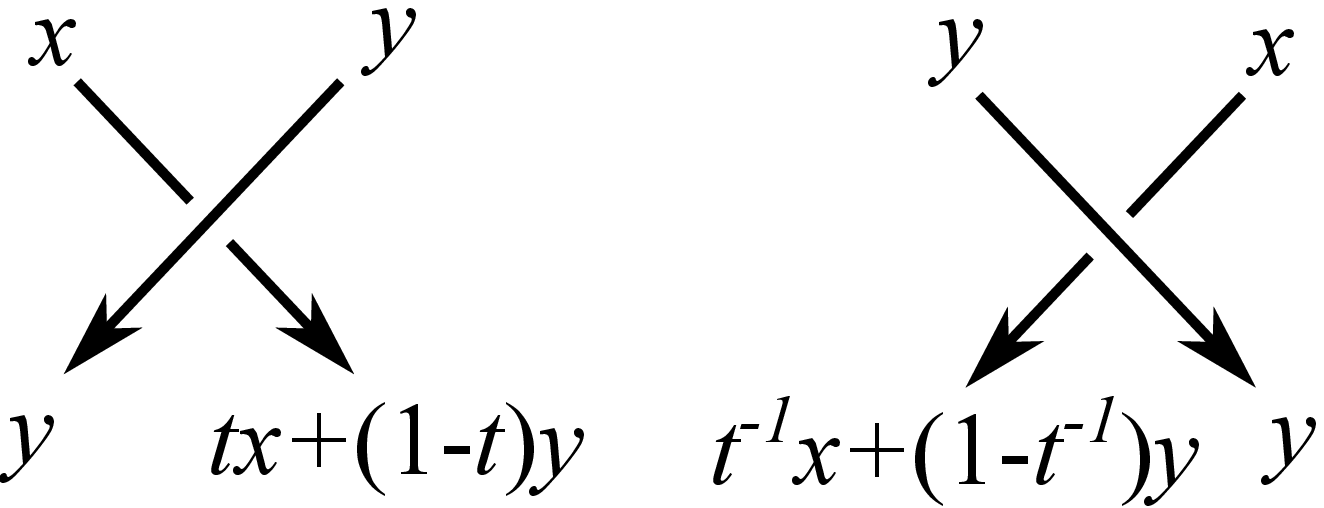}\qquad\qquad \includegraphics[width=0.18\textwidth]{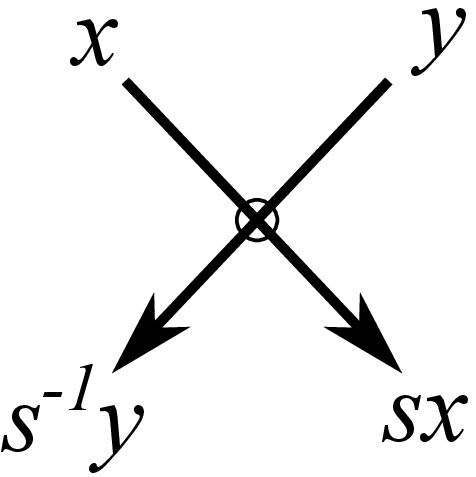}
\caption{Virtual Alexander quandle relation}\label{fig:alexander_quandle}
\end{figure}
\end{example}

Let $(B,f)$ be a virtual biquandle. Consider two binary operations $\circ_f,\ast_f$ given by the formula
\begin{equation}\label{eq:twisted_biquandle}
x\circ_f y = f^{-1}(x)\circ f^{-1}(y), \quad x\ast_f y = f^{-1}(x)\ast f^{-1}(y).
\end{equation}

\begin{theorem}\label{thm:twisted_biquandle}
1. The set $B$ with the operations $\circ_f$ and $\ast_f$ is a biquandle (we call it the {\em twisted biquandle} and denote $B_f$).

2. For any virtual diagram $D$ there is a bijection between the colouring sets $Col_{(B,f)}(D)$  and $Col_{B_f}(D)$.
\end{theorem}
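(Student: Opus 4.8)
The proof splits into two essentially independent tasks.

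\emph{Part 1.} I would verify the four biquandle axioms for $B_f$ one at a time, each time reducing the required identity to the corresponding axiom of $B$ after substituting $f^{-1}$ into the arguments (and $f^{-2}$ for the exchange laws). The mechanism is that $f^{-1}$ is again an automorphism of $B$, so the sideways map of $B_f$ is $S_f=S\circ(f^{-1}\times f^{-1})$, the left and right translations are $\alpha^f_y=\alpha_{f^{-1}(y)}\circ f^{-1}$ and $\beta^f_y=\beta_{f^{-1}(y)}\circ f^{-1}$, and $S$ itself is $f$-equivariant, $S\circ(f\times f)=(f\times f)\circ S$, precisely because $f$ commutes with $\circ$ and $\ast$. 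Axioms~(1)--(3) are then immediate, and each of the three exchange laws in~(4) follows by expanding both sides, pulling the $f^{-1}$'s outward, and invoking the matching exchange law of $B$ applied to $f^{-2}(x),f^{-2}(y),f^{-2}(z)$. This part is routine; along the way one records that $f$ remains an automorphism of $B_f$, and that in the Alexander example the construction returns exactly the Alexander biquandle (with the variable change $t\mapsto st$), which is the prototype guiding the general case.

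\emph{Part 2.} The idea is to collapse a short-arc colouring to a long-arc colouring by absorbing the $f$-twist carried by the virtual crossings into the biquandle operation. First, given a $(B,f)$-colouring $\mathbf c$ of the short arcs, the virtual colouring rule (Fig.~\ref{fig:virtual_biquandle_coloring}) together with the sign convention of Fig.~\ref{fig:virtual_crossing_sign} shows that passing a virtual crossing along a long arc $e$ multiplies the colour by $f^{\pm1}$ according to that crossing's sign; hence the colours of the short arcs lying on $e$ are all powers of $f$ applied to one element, with exponents read off from the virtual index cocycle $vi_D$. Thus a short-arc colouring is the same data as a long-arc colouring ``twisted by the cocycle $vi_D$''. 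To untwist, I would first pass to the canonical representative: by Theorem~\ref{thm:canonical_index_cocycle}, and since neither colouring set changes under virtual Reidemeister moves ($Col_{(B,f)}$ by Remark~\ref{rem:biquandle_reidemeister}, and $Col_{B_f}$ because it depends only on the Gauss diagram), I may assume $vi_D=ci_D=\sum_c\beta^l(c)$, so that every long arc carries virtual crossings only in small neighbourhoods of its two endpoints, distributed exactly as the local cochains $\beta^l(c)$. Then one declares the $B_f$-colour of a long arc to be the value of $\mathbf c$ on the part of that arc lying outside these neighbourhoods, and conversely reconstructs the short-arc colours near each classical crossing from a given $B_f$-colouring by applying the powers of $f$ prescribed by $\beta^l(c)$.

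The heart of the argument is then a single local computation at each classical crossing $c$: one must check that the plain $B$-relations of Fig.~\ref{fig:biquandle_coloring} among the four innermost short arcs at $c$ turn, after inserting the $f$-exponents dictated by $\beta^l(c)$ on the four incident edges, into exactly the $B_f$-relations among the four long arcs, using the $f$-equivariance of $S$ from Part~1 and the precise left/right form of $\beta^l(c)$ (the same case analysis as in Propositions~\ref{prop:left_right_canonical_cocycles} and~\ref{prop:index_and_canonical_cocycle}). The main obstacle is exactly this bookkeeping --- keeping the $f$-exponents consistent with the correct signs depending on the orientations and the handedness of each crossing, and then confirming that the two assignments (short-arc $\to$ long-arc and long-arc $\to$ short-arc) are genuinely mutually inverse. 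Everything else --- Part~1, the reduction to canonical form, and the ``twist $=vi_D$'' observation --- is straightforward once the sign conventions are pinned down.
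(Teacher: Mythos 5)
Your proposal is correct and follows essentially the same route as the paper: Part 1 is the same direct verification (the paper leaves it at that), and Part 2 is the paper's argument of normalizing to $vi_D=ci_D$ by virtual Reidemeister moves, localizing the $f$-twists at the classical crossings according to $\beta^l(c)$, distinguishing one short arc per long-arc end whose colours agree, and checking that the local $(B,f)$-relations conjugated by these $f$-powers become the $B_f$-relations. The only cosmetic difference is that the paper realizes the localized twists explicitly by adding $VR_1$ kinks at each classical crossing (Fig.~\ref{fig:diagram_virtual_biquandle}) rather than assuming the virtual crossings already sit in neighbourhoods of the endpoints, but this amounts to the same normalization.
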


\begin{proof}
1. The biquandle properties for $B_f$ are checked by a direct verification.

2. Let $D$ be a diagram of an oriented virtual link. We can assume that the virtual index cocycle of $D$ is canonical: $vi_D=ci_D$. Otherwise apply virtual Reidemeister moves to get such a diagram; by Remark~\ref{rem:biquandle_reidemeister} it has isomorphic sets of colourings. At each classical crossing we apply two first Reidemeister moves and choose four short arcs as shown in Fig.~\ref{fig:diagram_virtual_biquandle}. Then we obtain a diagram $D'$ such that $vi_{D'}=vi_D=ci_D$, and on any long arc of $D'$ there are two chosen short arcs (they coincide when the long arc has no virtual crossings).

\begin{figure}[h]
\centering\includegraphics[width=0.4\textwidth]{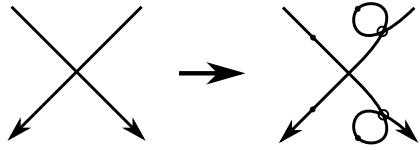}
\caption{Chosen short arcs in the diagram $D'$}\label{fig:diagram_virtual_biquandle}
\end{figure}

Consider a colouring of the diagram $D'$ with the virtual biquandle $(B,f)$. The condition $vi_{D'}=ci_D=ci^l_D$ ensures that the colours of the two chosen short arcs of any long arc of $D'$ coincide. At any crossing the colours of the chosen short arcs obey the relations of the biquandle $B_f$, see Fig.~\ref{fig:skewed_biquandle_operations}.  Hence, any colouring of $D'$ with the virtual biquandle $(B,f)$ induces a colouring of $D'$ with $B_f$: the colour of a long arc is the colour of the chosen short arcs in it.

\begin{figure}[h]
\centering\includegraphics[width=0.4\textwidth]{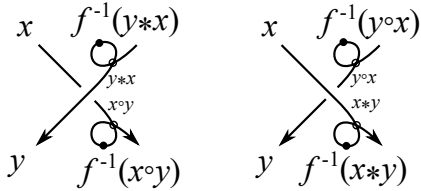}
\caption{Colouring of short arcs at a crossing of the diagram $D'$}\label{fig:skewed_biquandle_operations}
\end{figure}

Conversely, given a coloring of the diagram $D'$ with the twisted biquandle $B_f$, we colour the chosen short arcs with the colour of the long arc they belong to, and propagate the colouring to the other short arc using the operator $f$ (see Fig.~\ref{fig:virtual_biquandle_coloring}).

Thus, we have a bijection between the colouring sets $Col_{(B,f)}(D')$ and $Col_{B_f}(D')$. Hence,
$$Col_{(B,f)}(D)\cong Col_{(B,f)}(D')\cong Col_{B_f}(D')\cong Col_{B_f}(D).$$
\end{proof}

\begin{remark}
In~\cite[Section 2.6]{KauffmanManturov2005} L.H. Kauffman and V.O. Manturov introduced a more general construction of {\em formal virtual biquandle}. It would be interesting to find out whether formal virtual biquandles can be reduced to biquandles by means of some twist.
\end{remark}

As a consequence of Theorem~\ref{thm:twisted_biquandle} for the Alexander biquandle we have the following proposition which reproduces the conclusion of~\cite[Theorem 7.1]{BF2008}.

\begin{proposition}
Let $D$ be a diagram of an oriented virtual link. The Alexander biquandle $ABQ(D)$ and the virtual Alexander quandle $VAQ(D)$ are isomorphic as $\Z[t,t^{-1},s,s^{-1}]$-modules. The polynomials $G_D(s,t)$ and $\xi(D)$ coincide (up to multiplication by $\pm t^ks^l$).
\end{proposition}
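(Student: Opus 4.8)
The plan is to derive the statement as a corollary of Theorem~\ref{thm:twisted_biquandle}, checking that the Alexander virtual quandle $VAQ(D)$ with automorphism $f$ is an instance of a virtual biquandle $(B,f)$ and that its twist $B_f$ is (isomorphic to) the Alexander biquandle $ABQ(D)$. Concretely, I would start from the virtual biquandle structure on $VAQ(D)$ recorded in the excerpt: the underlying biquandle $B$ has operations $x\circ y = tx+(1-t)y$, $x\ast y=x$, and the automorphism is $f(x)=sx$, so $f^{-1}(x)=s^{-1}x$. Plugging these into the twist formula~\eqref{eq:twisted_biquandle} gives
\begin{equation*}
x\circ_f y = s^{-1}t\,x + s^{-1}(1-t)\,y,\qquad x\ast_f y = s^{-1}x,
\end{equation*}
which are precisely the operations of the Alexander biquandle $ABQ(D)$ as stated in the Alexander biquandle example. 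So $B_f$ and $ABQ(D)$ have literally the same operations; the remaining point is that the \emph{module} $VAQ(D)$, equipped with these twisted operations and colouring the long arcs, presents the same $A$-module as $ABQ(D)$.

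**First** I would make the module-theoretic identification precise. Theorem~\ref{thm:twisted_biquandle}(2) gives a bijection $Col_{(B,f)}(D)\cong Col_{B_f}(D)$ for every $D$, and in particular it applies to the tautological colouring: the tautological colouring of $D$ by $VAQ(D)$ (long arcs mapped to generators) corresponds under the bijection to a colouring of $D$ by $B_f=ABQ(D)$ sending long arcs to the generators of $ABQ(D)$. Because the bijection of Theorem~\ref{thm:twisted_biquandle} is constructed by choosing, on each long arc of a virtually-Reidemeister-modified diagram $D'$, a representative short arc and propagating by $f$, the correspondence of colourings is $A$-linear: the colour of a long arc of $D'$ equals the common colour of its chosen short arcs, and $f$ acts by the scalar $s$. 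Hence the universal ($A$-module) colouring objects are isomorphic as $A$-modules, i.e. $VAQ(D)\cong ABQ(D)$ as $\Z[t,t^{-1},s,s^{-1}]$-modules. (Equivalently, one can write down presentation matrices for the two modules from Fig.~\ref{fig:alexander_quandle} and Fig.~\ref{fig:alexander_biquandle} and check directly that eliminating the ``extra'' short-arc generators of $VAQ(D)$ via the virtual relations $f(x)=sx$ turns the $VAQ$ presentation into the $ABQ$ presentation; this is the hands-on version of the same argument.)

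**Then** the polynomial statement follows formally: the generalized Alexander polynomial $G_D(s,t)$ is (a generator of) the Fitting ideal of $ABQ(D)$ and $\xi(D)$ is a generator of the Fitting ideal of $VAQ(D)$; Fitting ideals are invariants of the isomorphism class of a finitely presented module, so the two ideals agree, and their generators coincide up to a unit of $A=\Z[t,t^{-1},s,s^{-1}]$, i.e. up to multiplication by $\pm t^k s^l$.

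**The main obstacle** I anticipate is bookkeeping rather than conceptual: ensuring that the bijection supplied by Theorem~\ref{thm:twisted_biquandle} genuinely upgrades to an $A$-module isomorphism on the universal/tautological colouring, not merely a bijection of colouring sets into an arbitrary target biquandle. This requires checking that the short-arc-selection and $f$-propagation used in the proof of Theorem~\ref{thm:twisted_biquandle} respect the module structure (addition and scalar multiplication), and that passing from $D$ to the modified diagram $D'$ via virtual Reidemeister moves does not change the module up to isomorphism — which is the module-level refinement of Remark~\ref{rem:biquandle_reidemeister}. Handling the multi-component case (mixed crossings, where the index cocycle is only homologous, not equal, to the canonical one) is subsumed in the already-proven Theorem~\ref{thm:twisted_biquandle}, so no new work is needed there. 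The variable-change disclaimer after the Alexander biquandle example ($t\mapsto st$) must be kept in mind so that the ``coincide'' claim is read with the paper's normalization, which is exactly why the equality of polynomials is only asserted up to $\pm t^k s^l$.
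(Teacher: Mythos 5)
Your proposal is correct and follows essentially the same route as the paper: you compute that the twist of the virtual Alexander quandle operations via $f(x)=sx$ reproduces the Alexander biquandle operations, identify the long-arc generators of $ABQ(D)$ with the chosen short-arc generators of $VAQ(D)$ through the tautological colourings as in Theorem~\ref{thm:twisted_biquandle}, and conclude via equality of Fitting ideals up to a unit of $\Z[t,t^{-1},s,s^{-1}]$. The extra care you take about $A$-linearity and the presentation-matrix elimination is exactly the elaboration implicit in the paper's remark that ``the relations in the modules are determined with the biquandle operators.''
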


\begin{proof}
Indeed, the formulas for the twisted biquandle structure on $VAQ(D)$ coincides with the Alexander biquandle in $ABQ(D)$. Using the tautological colorings we can identify the generators of the modules $VAQ(D)$ and $ABQ(D)$ (more precise, we identify long arc generators of $ABQ(D)$ with the chosen short arc generators of $VAQ(D)$ like in Theorem~\ref{thm:twisted_biquandle}). Since the relations in the modules are determined with the biquandle operators, the identification of the generators induces a well-defined isomorphism of the modules $VAQ(D)$ and $ABQ(D)$.

The polynomials $G_D(s,t)$ and $\xi(D)$ coincide (up to multiplication by an invertible element of the ring $\Z[t,t^{-1},s,s^{-1}]$) as the generators of the Fitting ideals of the Alexander modules.
\end{proof}

\section{Parity cocycle and local source-sink structures}

\subsection{Parity cocycle}

Let $D$ be an oriented virtual link diagram. Below we study the parity cocycle of $D$ which is just reduction of the virtual index cocycle mod $2$.

\begin{definition}\label{df:parity_cocycle}
The {\em virtual parity cocycle} $vp_D$ is the image of the virtual index cocycle $vi_D$ under the natural projection $C^1(\tilde D,\Z)\to C^1(\tilde D,\Z_2)$.
\end{definition}

\begin{remark}
1) In the same manner we can define {\em canonical parity cocycle} $cp_D=ci_D\pmod 2$, and the {\em parity class} $\pi_D=\iota_D\pmod 2\in H^1(\tilde D,\Z_2)$.

2) if $D$ is a knot diagram then for any classical crossing $vp_D(D^+_c)=vp_D(D^-_c)$ is the {\em Gaussian parity}~\cite{IMN} of the crossing $c$. This fact justifies the using word ``parity'' for $vp_D$.

3) the virtual parity cocycle $vp_D$ as well as the parity class $\pi_D$ can be defined for unoriented virtual link diagram. As shows Fig.~\ref{fig:left_right_cochains_equality}, the canonical parity cocycle $cp_D$ depends on the orientation, although it is well-defined  for unoriented virtual knots.
\end{remark}

\begin{remark}\label{rem:cocycle_cut_loci}
Since the value of $vp_D(e)$ for any long arc $e$ is equal to $0$ or $1$, we can picture the cocycle using {\em cut loci}: we place a cut locus on a long arc if its parity value is $1$ and place nothing if the value is $0$.

For example, the parity cocycles for a diagram of the virtual trefoil is shown in Fig.~\ref{fig:parity_representatives} (cf. Fig.~\ref{fig:virtual_index_representatives}).
\begin{figure}[h]
\centering\includegraphics[width=0.5\textwidth]{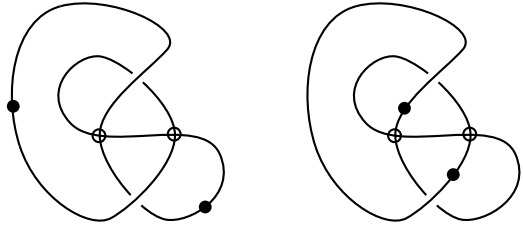}
\caption{The canonical parity cocycle (left) and the virtual parity cocycle (right) of a virtual trefoil diagram}\label{fig:parity_representatives}
\end{figure}

Below we shall consider diagrams with more than one cut locus on a long arc. In this case we treat them mod $2$, i.e. we allow two cut loci in one arc contract each other, see Fig.~\ref{fig:cut_loci_contraction}.

\begin{figure}[h]
\centering\includegraphics[width=0.4\textwidth]{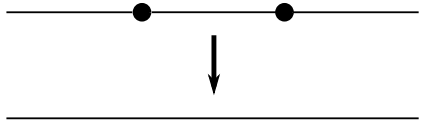}
\caption{Cut loci contraction}\label{fig:cut_loci_contraction}
\end{figure}
\end{remark}

Now we can reformulate Proposition~\ref{prop:smoothing_cocycle_property} for the virtual parity cocycle as follows.
\begin{corollary}\label{cor:parity_cocycle_property}
For any component $\gamma\in \Gamma(s)$ of any Kauffman state $D_s$ the number of cut loci in $\gamma$ is even.
\end{corollary}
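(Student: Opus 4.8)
The plan is to obtain Corollary~\ref{cor:parity_cocycle_property} as a direct translation of Proposition~\ref{prop:smoothing_cocycle_property} through the cut-locus bookkeeping set up in Remark~\ref{rem:cocycle_cut_loci}. Recall that, by Remark~\ref{rem:cocycle_cut_loci}, the number of cut loci placed on a long arc $e$ is precisely a nonnegative integer whose parity equals $vp_D(e)$, and the total number of cut loci on a cycle $\gamma$ built from long arcs $e_1,\dots,e_k$ (counted with multiplicity) is $\sum_i (\text{cut loci on }e_i)$, which is congruent mod $2$ to $\sum_i vp_D(e_i) = vp_D(\gamma)$.

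So the first step is to note that any component $\gamma\in\Gamma(s)$ of a Kauffman state $D_s$ is, by construction, a cycle in the covering graph $\tilde D$ (this is exactly the viewpoint already used in the paragraph preceding Proposition~\ref{prop:smoothing_cocycle_property}), hence a concatenation of long arcs, so the number of cut loci on it is $\equiv vp_D(\gamma) \pmod 2$. The second step is to invoke Proposition~\ref{prop:smoothing_cocycle_property}, which gives $vi_D(\gamma)=0$; reducing mod $2$ yields $vp_D(\gamma)=0$. Combining the two, the number of cut loci in $\gamma$ is even.

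I should be slightly careful about one bookkeeping point, which is the only place where anything could go wrong: the cut loci representing $vp_D$ live on the long arcs of $D$ itself, whereas $vp_D(\gamma)$ is computed as a sum over the edges of $\tilde D$ traversed by $\gamma$, and these edges correspond bijectively to the long arcs. The smoothing operation in Fig.~\ref{fig:crossing_smoothings} does not create or destroy virtual crossings, and a cut locus sitting on a long arc of $D$ sits on the corresponding arc of $D_s$ unchanged; so the count of cut loci lying on $\gamma$ inside $D_s$ equals $\sum$ over the long arcs making up $\gamma$ of the cut loci they carry. This is precisely $vp_D(\gamma)$ mod $2$, which I expect to be the only (very mild) obstacle — essentially just making explicit that "cut loci on $\gamma$" and "$vp_D$ evaluated on the cycle $\gamma$" are the same thing mod $2$. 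Everything else is immediate from the already-proved Proposition~\ref{prop:smoothing_cocycle_property}.

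Thus the proof is two lines: translate "number of cut loci on $\gamma$" into $vp_D(\gamma) \bmod 2$, then apply Proposition~\ref{prop:smoothing_cocycle_property} to conclude $vp_D(\gamma)=0$.

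\begin{proof}
By Remark~\ref{rem:cocycle_cut_loci}, the number of cut loci on a long arc $e$ is an integer congruent to $vp_D(e)$ modulo $2$. A component $\gamma\in\Gamma(s)$ is a cycle in the covering graph $\tilde D$, hence a concatenation of long arcs, and the number of cut loci lying on $\gamma$ is the sum, over the long arcs composing $\gamma$ (counted with multiplicity), of the numbers of cut loci they carry; this sum is congruent mod $2$ to $vp_D(\gamma)$. By Proposition~\ref{prop:smoothing_cocycle_property} we have $vi_D(\gamma)=0$, and reducing mod $2$ gives $vp_D(\gamma)=0$. Therefore the number of cut loci in $\gamma$ is even.
\end{proof}
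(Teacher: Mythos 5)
Your proof is correct and matches the paper's intent exactly: the paper treats this corollary as an immediate reformulation of Proposition~\ref{prop:smoothing_cocycle_property}, since the cut loci on a component $\gamma$ count $vp_D(\gamma)=vi_D(\gamma)\bmod 2=0$. The bookkeeping step you spell out (identifying the cut-locus count on $\gamma$ with $vp_D$ evaluated on the corresponding cycle of long arcs in $\tilde D$) is precisely what the paper leaves implicit, so there is nothing to add.
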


\subsection{Source-sink structures}

Let $D$ be a (unoriented) virtual link diagram and $\X(D)$ be the set of classical crossings of $D$. For any crossing $c\in\X(D)$ we choose one of the two {\em source-sink orientations}~\cite{IM13} (see Fig.~\ref{fig:source_sink_orientations}). Denote the choice of source-link orientation at each classical crossing by $\lambda$ and call it a {\em local source-sink structure (LSSS)} of the diagram $D$. Denote the set of all local source-sink structures by $\Lambda(D)$.

\begin{figure}[h]
\centering\includegraphics[width=0.6\textwidth]{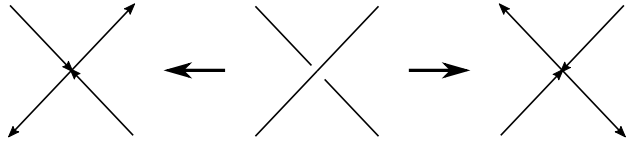}
\caption{Source-sink orientations}\label{fig:source_sink_orientations}
\end{figure}

\begin{example}
If the diagram $D$ is oriented, one can define the {\em canonical source-sink structure}~\cite{DKK2017} as shown in Fig.~\ref{fig:canonical_source_sink_orientation}.
\begin{figure}[h]
\centering\includegraphics[width=0.4\textwidth]{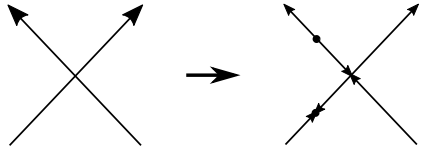}
\caption{Canonical source-sink orientation. The cut loci mark the places where the local source-sink orientation changes to the orientation of the link}\label{fig:canonical_source_sink_orientation}
\end{figure}

\end{example}

\begin{remark}
Source-sink structure can be used to define (local) orientation of Kauffman states of the diagram, see Fig.~\ref{fig:source_sink_smoothings}.
\begin{figure}[h]
\centering\includegraphics[width=0.6\textwidth]{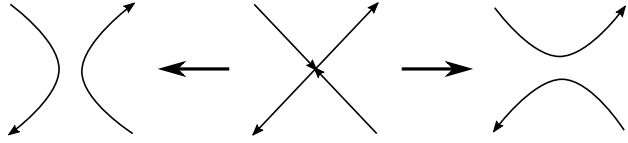}
\caption{Local orientation of smoothed components}\label{fig:source_sink_smoothings}
\end{figure}
\end{remark}

Any LSSS $\lambda$ has the {\em opposite LSSS} $-\lambda$ that is obtained by the {\em global change of orientation}, i.e. when one switches the source-sink structure at every classical crossing of $D$.

Given a LSSS $\lambda$, it defines a $1$-cochain $ssc_\lambda\in C^1(\tilde D,\Z_2)$ as follows. For any long arc $e$, if the orientations induced by $\lambda$ at the ends of $e$ coincide we set $ssc_\lambda(e)=0$, if the orientations are different we set $ssc_\lambda(e)=1$. We picture the cochain with cut loci like in Remark~\ref{rem:cocycle_cut_loci}, see Fig.~\ref{fig:virtual_trefoil_lsss}.
\begin{figure}[h]
\centering\includegraphics[width=0.3\textwidth]{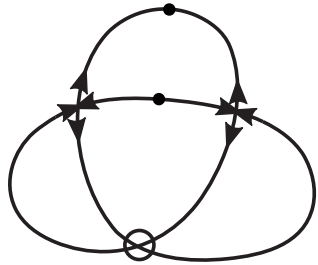}
\caption{A local source-sink structure on the virtual trefoil and the corresponding $1$-cochain}\label{fig:virtual_trefoil_lsss}
\end{figure}

If we take the opposite LSSS $-\lambda$, the orientations compatibility on any long arc is the same as for $\lambda$. Thus, $ssc_{-\lambda}=ssc_\lambda$.

Let the covering graph $\tilde D$ be connected. Then the following statement holds.

\begin{proposition}\label{prop:lsss_parity_cochains}
The map $ssc\colon\Lambda(D)\to C^1(\tilde D,\Z_2)$ induces a bijection between the set of local source-sink structures $\lambda$ considered up to global change of orientation and the set of representatives $\alpha$ of the parity class $\pi_D$.
\end{proposition}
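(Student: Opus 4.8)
The plan is to check three things: (i) every cochain $ssc_\lambda$ represents the parity class $\pi_D$; (ii) $ssc_\lambda=ssc_{\lambda'}$ forces $\lambda'=\pm\lambda$; (iii) every representative of $\pi_D$ arises as some $ssc_\lambda$. With the already noted equality $ssc_{-\lambda}=ssc_\lambda$, these give the bijection. Items (ii) and (iii) will be formal consequences of a single computation, so the substance is in (i).

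First I would record how $ssc_\lambda$ behaves under a \emph{local} change of $\lambda$: if $\lambda'$ is obtained from $\lambda$ by reversing the source-sink orientation at one crossing $c$ only, then $ssc_{\lambda'}=ssc_\lambda+\delta c$ in $C^1(\tilde D,\Z_2)$. Indeed, reversing at $c$ reverses the $\lambda$-induced orientation of every long-arc germ at $c$, so $ssc_\lambda(e)$ toggles exactly for the edges $e$ with a single endpoint at $c$ (an edge with both endpoints at $c$ has both germs reversed and is unchanged), which is precisely the support of $\delta c$. Iterating, if $\lambda$ and $\lambda'$ differ at a set $T\subseteq\X(D)$ of crossings then $ssc_{\lambda'}=ssc_\lambda+\delta\bigl(\sum_{c\in T}c\bigr)$. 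Hence all the $ssc_\lambda$ are cohomologous; and $ssc_\lambda=ssc_{\lambda'}$ iff $\sum_{c\in T}c$ is a $0$-cocycle, i.e.\ (as $\tilde D$ is connected, so that $0$-cocycles are constant) $T=\varnothing$ or $T=\X(D)$, i.e.\ $\lambda'=\lambda$ or $\lambda'=-\lambda$ --- this is (ii). For (iii): once (i) is known, any representative $\alpha$ of $\pi_D$ can be written $\alpha=ssc_{\lambda_0}+\delta g$ with $g\in C^0(\tilde D,\Z_2)$, and flipping $\lambda_0$ at the support of $g$ yields $\lambda$ with $ssc_\lambda=\alpha$. (Equivalently, (iii) follows from (i), (ii) and $|\Lambda(D)/\{\pm\}|=2^{|\X(D)|-1}=|B^1(\tilde D,\Z_2)|$.)

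For (i) it is enough to show $ssc_\lambda$ and the virtual parity cocycle $vp_D$ agree on every element of the $\Z_2$-cycle space $Z_1(\tilde D,\Z_2)$, and for this it suffices to treat simple cycles $\gamma$, since they span $Z_1$. Let $D\setminus\gamma$ denote the sub-diagram consisting of the long arcs not on $\gamma$. A short case analysis at each crossing, using the defining property of source-sink orientations (they alternate around each vertex: opposite germs get equal, adjacent germs opposite, $\pm1$-values), shows that $ssc_\lambda(\gamma)$ equals mod $2$ the number of passages of $\gamma$ straight through a classical crossing, which is congruent mod $2$ to the number of classical crossings at which $\gamma$ and $D\setminus\gamma$ cross transversally. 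Likewise, straight from the definition, $vp_D(\gamma)$ counts mod $2$ the virtual crossings on the edges of $\gamma$, and this equals the number of virtual crossings at which $\gamma$ and $D\setminus\gamma$ cross. Adding, $ssc_\lambda(\gamma)+vp_D(\gamma)$ equals mod $2$ the total number of transversal intersections of $\gamma$ with $D\setminus\gamma$ in the plane. Since $4$-valence makes both $\gamma$ and $D\setminus\gamma$ into $\Z_2$-cycles in $\R^2$ (each classical crossing is hit by an even number of their germs), that intersection number vanishes. Therefore $ssc_\lambda(\gamma)=vp_D(\gamma)$ for all $\gamma$, so $[ssc_\lambda]=[vp_D]=\pi_D$, which is (i).

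I expect the case analysis in (i) to be the main obstacle: one must sort out, at a classical crossing, the ways $\gamma$ can meet it (straight along one strand, straight along both strands, or turning from one strand to the other), match each with whether $\gamma$ meets $D\setminus\gamma$ there, and likewise handle virtual self-crossings of long arcs, so that the two contributions really combine into an honest planar intersection number --- after which planarity finishes the job. A quicker but less self-contained proof of (i): for an oriented $D$, the cochain $ssc_{\lambda_{\mathrm{can}}}$ of the canonical source-sink structure coincides with the canonical parity cocycle $cp_D$ (compare Fig.~\ref{fig:canonical_source_sink_orientation} with the definition of $ci_D$), and $cp_D$ represents $\pi_D$; the local-change identity above then propagates this to arbitrary $\lambda$.
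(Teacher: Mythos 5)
Your proof is correct, but its key step is carried out by a genuinely different argument than the paper's. Both proofs share the same backbone: the local-change identity $ssc_{\lambda'}=ssc_\lambda+\delta c$ when the source-sink orientation is flipped at a single crossing, which gives surjectivity onto the representatives once one representative is known, and injectivity up to global reversal. For injectivity you argue algebraically ($ssc_{\lambda'}=ssc_\lambda$ forces $\delta(\sum_{c\in T}c)=0$, and on a connected graph $\ker\delta$ consists of the constants, so $T=\varnothing$ or $T=\X(D)$), whereas the paper propagates a local orientation along the connected graph; these are equivalent uses of connectedness. The real divergence is in showing $[ssc_\lambda]=\pi_D$: the paper orients $D$ and reads off from the figures that the canonical LSSS satisfies $ssc_{\lambda_{can}}=cp_D$, then transports to arbitrary $\lambda$; you instead prove directly that $ssc_\lambda$ and $vp_D$ agree on every cycle $\gamma$ of $\tilde D$, interpreting $ssc_\lambda(\gamma)$ as the mod-$2$ number of straight passages of $\gamma$ through classical crossings and $vp_D(\gamma)$ as the mod-$2$ number of virtual crossings of $\gamma$ with the complementary even subdiagram, and then invoking vanishing of the mod-$2$ intersection number of two $\Z_2$-cycles in the plane -- the same planarity trick the paper uses in Proposition~\ref{prop:smoothing_cocycle_property}. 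Your case analysis at a crossing (turn versus straight passage, and the parity bookkeeping of orientation agreement along a traversal) does check out, including the loop-edge case, so the sketch closes. What each approach buys: yours is self-contained, needs no orientation of $D$ and no appeal to the canonical cocycle $ci_D$ or Theorem~\ref{thm:canonical_index_cocycle}, so it works verbatim for unoriented diagrams, where $vp_D$ and $\pi_D$ are still defined; the paper's route is shorter given that $\lambda_{can}$ and $cp_D$ are already on the table and has the bonus of exhibiting $cp_D$ explicitly as $ssc_{\lambda_{can}}$. Your closing remark already notes this second route, so you have both.
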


\begin{proof}
We can assume that $D$ is oriented. Then we can compare the canonical LSSS $\lambda_{can}$ (see Fig.~\ref{fig:canonical_source_sink_orientation}) and the (left) canonical parity cocycle $cp_D$ (see Fig.~\ref{fig:left_right_cochains} left). We see that the places where the local orientation of $\lambda_{can}$ violates the global orientation of $D$ correspond exactly to the non-zero labels of the left canonical parity cocycle. Thus, $$ssc_{\lambda_{can}}=\sum_{c\in\X(D)}\beta^l(c)=cp_D\in C^1(\tilde D,\Z_2).$$

Any LSSS $\lambda$ differs from the canonical LSSS by changes of local source-sink orientation in some crossings, see Fig.~\ref{fig:source_sink_switch}. Let $C\subset\X(D)$ be the set of crossings where the source-sink orientation is changed. Each change of orientation adds cut loci to the arcs incident to the crossing (cut loci may be contracted in pairs after that). In other words, the change of orientation at a crossing $c$ adds to the cocycle $ssc_{\lambda_{can}}$ the cochain $\delta c\pmod 2$, see Fig.~\ref{fig:delta_crossing}. Hence,
$$
ssc_\lambda = ssc_{\lambda_{can}}+\sum_{c\in C}\delta c = cp_C+\sum_{c\in C}\delta c\in\pi_D.
$$

\begin{figure}[h]
\centering\includegraphics[width=0.4\textwidth]{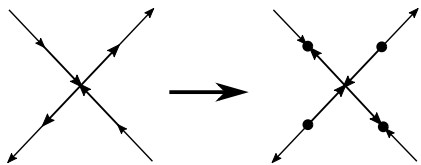}
\caption{Change of local source-sink orientation}\label{fig:source_sink_switch}
\end{figure}

Conversly, any representative of $\pi_D$ is of form $\alpha= cp_D+\sum_{c\in C}\delta c$ for some set of crossings $C\subset\X(D)$. Then $\alpha=ssc_\lambda$ where $\lambda$ is obtained from $\lambda_{can}$ by changing of local orientation in the crossings from the set $C$.

Finally, we need to show that if $ssc_{\lambda_1}=ssc_{\lambda_2}$ then either $\lambda_1=\lambda_2$ or $\lambda_1=-\lambda_2$. The cocycle $ssc_{\lambda_1}$ determines the local orientation of an end of a long arc if we know the orientation on the other end of the arc. Choose an arbitrary crossing and set an arbitrary local source-sink orientation in it. Since the diagram is connected, we can uniquely propagate the local orientation to all the other crossings. The ambiguity of the choice of the initial local orientation leads to two LSSS $\lambda_1$ and $-\lambda_1$. Since $ssc_{\lambda_1}=ssc_{\lambda_2}$, we have either $\lambda_1=\lambda_2$ or $\lambda_1=-\lambda_2$.
\end{proof}

\begin{corollary}
Let $D$ be a virtual link diagram. The atom $A(D)$ of the diagram $D$ is checkerboard colourable (i.e. one can colour the cells of $A(D)$ in two colours such that any edge of $\tilde D\subset A(D)$ is adjacent to two cells of different colour) if and only if $\pi_D=0$.
\end{corollary}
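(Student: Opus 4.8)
The plan is to reduce the corollary to Proposition~\ref{prop:lsss_parity_cochains} by translating "checkerboard colourable" into the language of local source-sink structures. First I would recall that a checkerboard colouring of the cells of the atom $A(D)$ assigns to each $2$-cell a colour in $\Z_2$ so that the two cells adjacent to any edge of $\tilde D$ get opposite colours. The key observation is that the $1$-skeleton of $A(D)$ is exactly $\tilde D$, and each vertex (classical crossing) is surrounded by four local sectors which are glued into the cells; a checkerboard colouring of the cells therefore induces, at each crossing, a partition of the four sectors into two "black" and two "white" ones, opposite sectors being the same colour. This is precisely a source-sink orientation at that crossing in the sense of Fig.~\ref{fig:source_sink_orientations}: the black cells are the "sources" side and the white cells the "sinks" side (or vice versa). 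Thus a checkerboard colouring of $A(D)$ determines an LSSS $\lambda$; conversely an LSSS $\lambda$ determines a colouring of the sectors around each crossing, and the compatibility of these local colourings along an edge $e$ of $\tilde D$ is exactly the condition $ssc_\lambda(e)=0$.

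Next I would make this correspondence precise. Given a global checkerboard colouring of $A(D)$, reading off the sector-colouring at every crossing yields an LSSS $\lambda$, and because the colouring is consistent across every edge we get $ssc_\lambda(e)=0$ for every long arc $e$, i.e.\ $ssc_\lambda=0$. Conversely, if some LSSS $\lambda$ has $ssc_\lambda=0$, then the sector colours agree across every edge of $\tilde D$, hence propagate to a well-defined $2$-colouring of all the cells of $A(D)$ with opposite colours across each edge, i.e.\ a checkerboard colouring. (Here one uses that each $2$-cell of $A(D)$ is a disc glued along a boundary consisting of sectors and arcs, so a colour assigned to one sector of the cell is forced to be constant on the whole cell; connectedness of $\tilde D$ is not even needed for this direction, but is available.) So $A(D)$ is checkerboard colourable iff there exists an LSSS $\lambda$ with $ssc_\lambda=0$.

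Finally I would invoke Proposition~\ref{prop:lsss_parity_cochains}: the map $ssc$ induces a bijection between LSSS (up to global orientation change) and representatives of the parity class $\pi_D\in H^1(\tilde D,\Z_2)$. Hence there is an LSSS $\lambda$ with $ssc_\lambda=0$ if and only if the zero cochain is a representative of $\pi_D$, i.e.\ if and only if $\pi_D=0$. Combining with the previous paragraph gives: $A(D)$ is checkerboard colourable $\iff$ $\pi_D=0$, which is the claim. The one subtlety — and the step I expect to require the most care — is the identification of a sector-colouring at a crossing with a source-sink orientation: one must check, against Fig.~\ref{fig:source_sink_orientations}, that the two possible $2$-colourings of the four sectors at a vertex correspond exactly to the two source-sink orientations, and that "colours agree across an edge" matches the definition of $ssc_\lambda(e)=0$ rather than its negation (a sign/convention check that is easy to get backwards). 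Everything else is a routine gluing argument plus a direct appeal to Proposition~\ref{prop:lsss_parity_cochains}.
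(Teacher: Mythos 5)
Your argument is correct and follows essentially the same route as the paper: reduce checkerboard colourability of $A(D)$ to the existence of an LSSS $\lambda$ with $ssc_\lambda=0$, then apply Proposition~\ref{prop:lsss_parity_cochains}. The only difference is that the paper outsources the first equivalence to Manturov's result in~\cite{IM13} (checkerboard colourable $\iff$ global source-sink structure exists), whereas you sketch a direct proof of it via the correspondence between sector $2$-colourings and source-sink orientations at each crossing; that sketch is sound.
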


\begin{proof}
V.O. Manturov proved~\cite{IM13} that the atom is checkerboard colourable if and only if the diagram $D$ admits a global source-sink structure, i.e. there is a LSSS $\lambda$ without cut loci, i.e. $ssc_\lambda=0$. Hence, $\pi_D=[ssc_\lambda]=0$.

Conversely, if $\pi_D=0$ then the zero cocycle $\alpha=0$ is a representative of $\pi_D$. By Proposition~\ref{prop:lsss_parity_cochains}, there exists a LSSS $\lambda$ such that $ssc_\lambda=0$. Hence, $\lambda$ defines a global source-sink structure on the diagram $D$.
\end{proof}

\subsection{Khovanov homology of virtual links}

As an application of the parity cocycle we consider a construction of the Khovanov homology of virtual links which uses a local source-sink structure. The construction below is a reformulation of the Khovanov homology constructions in~\cite{DKK2017,Manturov2007,Manturov2007a}.

Let us recall that the Frobenius system used for calculation of Khovanov homology is the algebra $V=\Z[X]/(X^2)$ with the comultiplication
$$
\Delta(1)=1\otimes X+X\otimes 1,\qquad \Delta(X)=X\otimes X.
$$


Let $D$ be a diagram of a virtual link and $\X(D)$ be the set of the classical crossings of $D$. Denote the set of Kauffman states of the diagram $D$ by $S(D)=\{s\colon\X(D)\to\{0,1\}\}$.

In order to define Khovanov complex, we shall use the following additional structures:
\begin{itemize}
\item a LSSS $\lambda$ on the diagram $D$;
\item oriented directions $o$ on $\lambda$;
\item orders $\sigma_s$ on the sets of components $\Gamma(s)$ of Kauffman states;
\item star points $\star_s(\gamma)$ in all components $\gamma\in\Gamma(s)$ of Kauffman states.
\end{itemize}

Let us define the chains of the Khovanov complex.

Fix a LSSS $\lambda$ on the diagram $D$. It defines a $1$-cochain $ssc_\lambda$ which corresponds to a set of cut loci on the long arcs of the diagram $D$, see Fig.~\ref{fig:virtual_trefoil_lsss}.

For any Kauffman state $D_s, s\in S(D)$, each component $\gamma\in\Gamma(s)$ of it contains an even number of cut loci by Corollary~\ref{cor:parity_cocycle_property}. The cut loci splits the components into edges.

An {\em enhanced Kauffman state} is an assingment a label $1,X$ or $-X$ to each edge of the components of $D_s$ such that any labels $u,v$ which belong to the adjacent edges of one component satisfy the condition $v=\tau(u)$, where $\tau(1)=1$, $\tau(\pm X)=\mp X$, see Fig.~\ref{fig:enhanced_state_involution_rule}.

\begin{figure}[h]
\centering\includegraphics[width=0.2\textwidth]{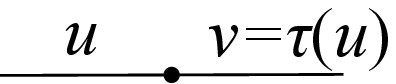}
\caption{Labels of adjacent edges in an enhance Kauffman state}\label{fig:enhanced_state_involution_rule}
\end{figure}

\begin{figure}[h]
\centering\includegraphics[width=0.6\textwidth]{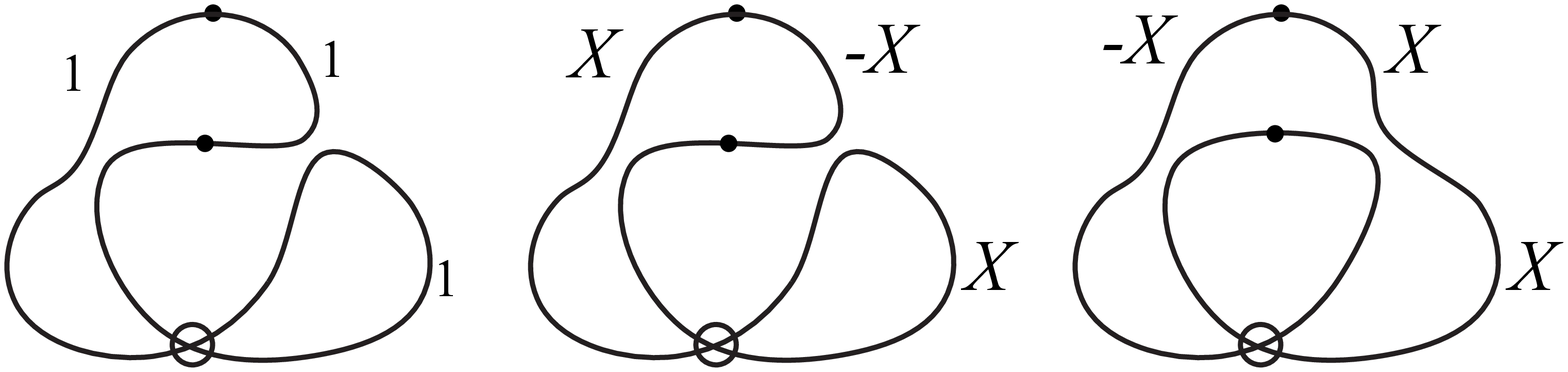}
\caption{Enhanced Kauffman states of the virtual trefoil}\label{fig:virtual_trefoil_enhanced_state}
\end{figure}

Let $V(s)$ be the free abelian group generated by the enhanced Kauffman states corresponding to the state $D_s$. (In fact, we identify some enhanced states. If two enhanced states $x$ and $y$ have the same labels up to sign, we identify $y$ with $(-1)^kx$ where $k$ is the number of components where the labels of $x$ and $y$ have different signs.)

The Khovanov chain space is the sum $CKh(D)=\bigoplus_{s\in S(D)} V(s)$. It has the {\em homological grading} $h$: for any $x\in V(s)$ its grading is $h(x)=|s|=\sum_{c\in\X(D)}s(c)$.

Fix an arbitrary order $\sigma_s\colon \{1,2,\dots, |\Gamma(s)|\}\to \Gamma(s)$ of the components for all Kauffman states $D_s$, $s\in S(D)$.

Choose a point $\star_s(\gamma)$ different from the cut loci on each component $\gamma\in\Gamma(s)$ of each Kauffman state $D_s$, $s\in S(D)$, see Fig.~\ref{fig:stars_and_order}.

\begin{figure}[h]
\centering\includegraphics[width=0.2\textwidth]{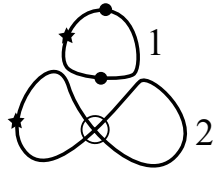}
\caption{Star points and an order of components}\label{fig:stars_and_order}
\end{figure}

The orders $\sigma_s$ and the star points $\star_s(\gamma)$ allow us to identify the module $V(s)$ with the tensor product $V^{\otimes |\Gamma(s)|}=V_{\sigma_s,\star_s}$. Any enhanced Kauffman state $x$ defines the element $x_1\otimes x_2\otimes\cdots\otimes x_{|\Gamma(s)|}\in V^{\otimes |\Gamma(s)|}$ where
$x_i$ is the label of the edge the point $\star_s({\sigma_s(i)})$ belongs to.

Let us now define the differentials of the Khovanov complex.

The Kauffman states of the diagram $D$ form a $|\X(D)|$-dimensional cube (the states are the vertices of this cube) and the differentials correspond to the edges of the cube. Let $s\to s'$ be an edge of the state cube. Then there exists a crossing $c\in\X(D)$ such that $s(c)=0, s'(c)=1$ and $s(c')=s'(c')$ for any $c'\in\X(D)$, $c'\ne c$. Hence, the edge $s\to s'$ corresponds to the change of smoothing in the crossing $c$.

There can be one of the three cases:
\begin{enumerate}
\item a component $\gamma\in\Gamma(s)$ splits into two components $\gamma'_1$ and $\gamma'_2\in\Gamma(s')$ (Fig.~\ref{fig:surgeries1to2} left);
\item two components $\gamma_1,\gamma_2\in\Gamma(s)$ merge into one component $\gamma'\in\Gamma(s')$ (Fig.~\ref{fig:surgeries1to2} right);
\item a component $\gamma\in\Gamma(s)$ transforms to a component $\gamma'\in\Gamma(s')$ (Fig.~\ref{fig:surgeries1to1}).
\end{enumerate}

\begin{figure}[h]
\centering\includegraphics[width=0.25\textwidth]{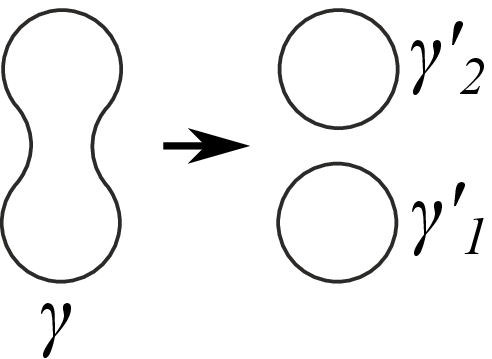}\qquad\qquad
\includegraphics[width=0.4\textwidth]{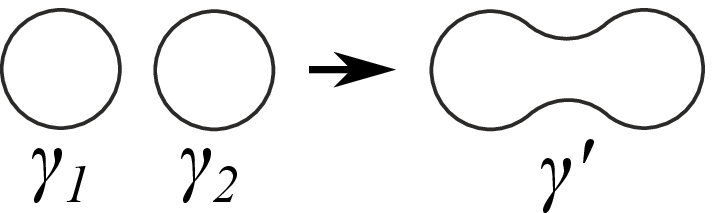}
\caption{Splitting and merging of components}\label{fig:surgeries1to2}
\end{figure}

\begin{figure}[h]
\centering\includegraphics[width=0.3\textwidth]{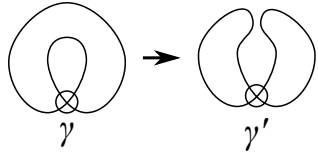}
\caption{Single circle surgery}\label{fig:surgeries1to1}
\end{figure}

The transformed components pass by the crossing $c$, so one can mark a point which corresponds to the crossing on each of these components.

We assign a map
\begin{gather*}
\partial_{s\to s'}\colon V_{\sigma_s,\star_s}\to V_{\sigma_{s'},\star_{s'}},\quad \partial_{s\to s'}=\sign(s,s')\partial^0_{s\to s'},\\
\partial^0_{s\to s'}(x_1\otimes\cdots\otimes x_{|\Gamma(s)|})=x'_1\otimes\cdots\otimes x'_{|\Gamma(s')|}.
\end{gather*}

to these transformations. The $\sign(s,s')$ will be defined later.

In the first case we define the unsigned differential $\partial^0_{s\to s'}$ as follows
\begin{equation}\label{eq:khovanov_differential_split}
x'_i=\left\{\begin{array}{cl}
\tau^{l_\lambda\left(\star_s(\sigma_{s'}(i)),\star_{s'}(\sigma_{s'}(i))\right)}(x_{\sigma_s^{-1}\circ\sigma_{s'}(i)}), & \sigma_{s'}(i)\ne\gamma'_1,\gamma'_2,\\
\tau^{l_\lambda(c,\star_{s'}(\gamma'_1))}(\tau^{l_\lambda(c,\star_s(\gamma))}(x_{\sigma_s^{-1}(\gamma)})^{(1)}), & \sigma_{s'}(i)=\gamma'_1,\\
\tau^{l_\lambda(c,\star_{s'}(\gamma'_2))}(\tau^{l_\lambda(c,\star_s(\gamma))}(x_{\sigma_s^{-1}(\gamma)})^{(2)}), & \sigma_{s'}(i)=\gamma'_2.
\end{array}\right.
\end{equation}
Here we identify any components of $D_s$ which doesn't pass by the crossing $c$ with the correspondent component in $D_{s'}$. We use the Sweedler notation $\Delta(x)=x^{(1)}\otimes x^{(2)}$ here~\cite{Sweedler}.

For any two points $p,q$ in one component of a Kauffman state, $l_\lambda(p,q)$ is the number of cut loci on an arc with the ends $p$ and $q$, see Fig.~\ref{fig:lpq}. According to the rule in Fig.~\ref{fig:enhanced_state_involution_rule}, the labels at the points $p$ and $q$ of the component containing them in an enhanced Kauffman state, are connected by the map $\tau^{l(p,q)}$.

\begin{figure}[h]
\centering\includegraphics[width=0.25\textwidth]{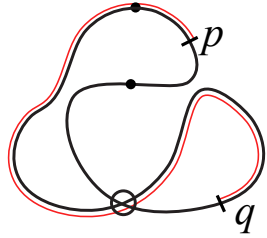}
\caption{The number $l_\lambda(p,q)=1$ does not depend on the arc connecting $p$ and $q$}\label{fig:lpq}
\end{figure}

In the second case
\begin{equation}\label{eq:khovanov_differential_merge}
x'_i=\left\{\begin{array}{cl}
\tau^{l_\lambda\left(\star_s(\sigma_{s'}(i)),\star_{s'}(\sigma_{s'}(i))\right)}(x_{\sigma_s^{-1}\circ\sigma_{s'}(i)}), & \sigma_{s'}(i)\ne\gamma',\\
\tau^{l_\lambda(c,\star_{s'}(\gamma'))}\left(\tau^{l_\lambda(c,\star_s(\gamma_1))}(x_{\sigma_s^{-1}(\gamma_1)})\cdot \tau^{l_\lambda(c,\star_s(\gamma_2))}(x_{\sigma_s^{-1}(\gamma_2)})\right), & \sigma_{s'}(i)=\gamma'.
\end{array}\right.
\end{equation}

In the third case we set $\partial_{s\to s'}=\partial^0_{s\to s'}=0$.

The differential of the Khovanov complex is the sum $d=\sum_{s\to s'} \partial_{s\to s'}$.

\begin{remark}
The formula~\eqref{eq:khovanov_differential_split} means the following. Given the label $x^\star_\gamma$ of the component $\gamma\in\Gamma(s)$ we determine the label of the component near the crossing $c$ where the splitting occurs, by applying $l_\lambda(c,\star_s(\gamma))$ times the involution $\tau$: $x^c_\gamma=\tau^{l_\lambda(c,\star_s(\gamma))}(x^\star_\gamma)$. Then we apply the comultiplication of the Frobenius algebra to $x^c_\gamma$ and get the labels of the split components near the crossing $c$: $\Delta(x^c_\gamma)=x^c_{\gamma'_1}\otimes x^c_{\gamma'_2}$. Finally, we find the labels of the components $\gamma'_1$ and $\gamma'_2$ at the star points: $x^\star_{\gamma'_1}=\tau^{l_\lambda(c,\star_s(\gamma'_1))}(x^\star_{\gamma'_1})$, $x^\star_{\gamma'_2}=\tau^{l_\lambda(c,\star_s(\gamma'_2))}(x^\star_{\gamma'_2})$.

We also rearrange the indices of the components according to the orders $\sigma_s$ and $\sigma_{s'}$.
\begin{figure}[h]
\centering\includegraphics[width=0.5\textwidth]{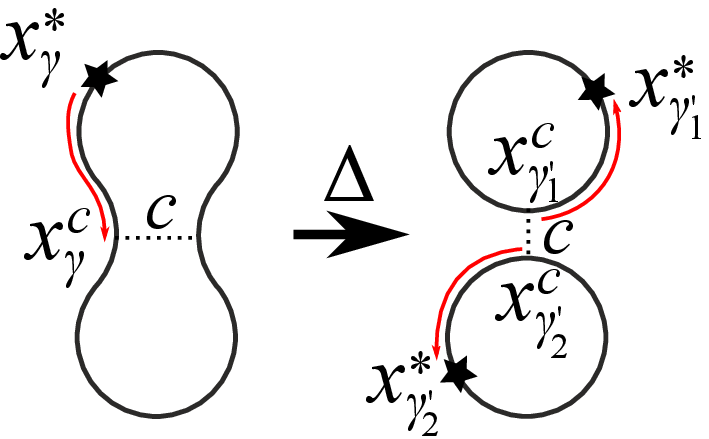}
\caption{}\label{fig:surgery1to2_stars}
\end{figure}
\end{remark}

Let us define the signs of the differential.

At each crossing we choose one of the incoming edges in the source-sink orientation as shown in Fig.~\ref{fig:source_sink_orders}. We mark the distinguished edge with a box. The choice of the edges for all classical crossings will be denoted by $o=\{o_c\}_{c\in\X(D)}$ and called an {\em oriented direction system} on the LSSS $\lambda$.

\begin{figure}[h]
\centering\includegraphics[width=0.4\textwidth]{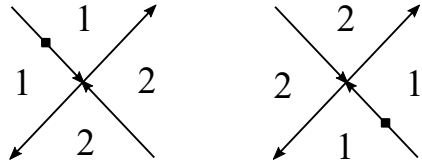}
\caption{Directions of a source-sink orientation}\label{fig:source_sink_orders}
\end{figure}

If $\lambda$ is a canonical LSSS of the oriented diagram $D$, we can consider the {\em canonical oriented direction system} choosing the incoming edges as shown in Fig.~\ref{fig:canonical_order}.

\begin{figure}[h]
\centering\includegraphics[width=0.4\textwidth]{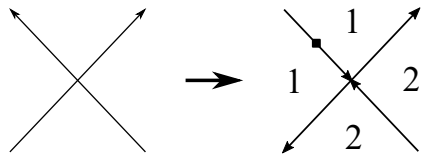}
\caption{Canonical oriented direction system}\label{fig:canonical_order}
\end{figure}

The oriented direction $o_c$ in a crossing $c$ defines a local ordering of the components in Kauffman states which pass by $c$, see Fig.~\ref{fig:source_sink_smoothing_order}. We can consider the local order as a family of maps $o^s_c\colon\{1,2\}\to\Gamma(s)$, $c\in\X(D)$, $x\in S(D)$ (the images of $1$ and $2$ may coincide).

\begin{figure}[h]
\centering\includegraphics[width=0.6\textwidth]{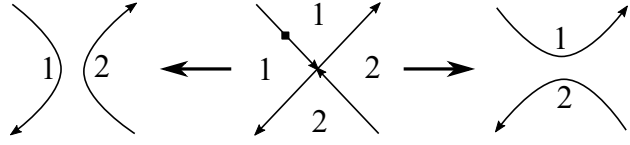}
\caption{Local ordering of component induced by an oriented direction}\label{fig:source_sink_smoothing_order}
\end{figure}

For any crossing $c\in\X(D)$, state $s\in S(D)$, global order $\sigma_s$ and oriented direction system $o$ we define two new orders:
\begin{itemize}
\item an order $\sigma_s\setminus c$ on the set $\Gamma(s)\setminus c=\Gamma(s)\setminus\im o^s_c$ consisting of the components in $D_s$ which don't pass by the crossing $c$. The order $\sigma_s\setminus c$ is the restriction of $\sigma_s$ to this set. Formally, let
    $j_1=\min\sigma_s^{-1}(\im o^s_c)$ and $j_2=\max\sigma_s^{-1}(\im o^s_c)$ be the indices of the component which pass by $c$ in the order $\sigma_s$. Then $j_1\le j_2$, they may coincide. We define
\begin{equation}\label{eq:sigma-c}
(\sigma_s\setminus c)(i)=\left\{\begin{array}{cl}
\sigma_s(i), & i<j_1,\\
\sigma_s(i+1), & j_1\le i<j_2-1,\\
\sigma_s(i+2), & i>=j_2-1.
\end{array}\right.
\end{equation}

\item an order $\sigma_s\triangleleft(o,c)$ on the set $\Gamma(s)$. Informally, we start the numbering of the components of $D_s$ with those that pass the crossing $c$, and enumerate them in the order $o_c^s$. The other components are ordered according $\sigma_s$. Let us give the explicit formulas. Let $n_s(c)$ be the number of component of $D_s$ which pass by the crossing $c$. Then
\begin{equation}\label{eq:sigma*oc}
(\sigma_s\triangleleft(o,c))(i)=\left\{\begin{array}{cl}
o_c^s(i), & i\le n_s(c),\\
(\sigma_s\setminus c)(i-n_s(c)), & i>n_s(c).
\end{array}\right.
\end{equation}
\end{itemize}

Given two orders $\sigma_1,\sigma_2\colon\{1,\dots,|Z|\}\to Z$ on a finite set $Z$, we define the sign $\epsilon(\sigma_1,\sigma_2)$ as the sign of the permutation $\sigma_1^{-1}\circ\sigma_2$ on the set $\{1,\dots,|Z|\}$.

Now we can define $\sign(s,s')$ of the differential $\partial_{s\to s'}$ as follows

\begin{equation}\label{eq:differential_sign}
\sign(s,s')=\epsilon(\sigma_s,\sigma_s\triangleleft(o,c))\cdot\epsilon(\sigma_s\setminus c,\sigma_{s'}\setminus c)\cdot\epsilon(\sigma_{s'},\sigma_{s'}\triangleleft(o,c)).
\end{equation}

\begin{example}
Consider a Hopf link diagram $D$ and choose a local source-sink structure $\lambda$ and an oriented direction system $o$ as shown in Fig.~\ref{fig:hopf_link}. Choose a global order $\{\sigma_s\}_{s\in S(D)}$ and star points $\{\star_s\}_{s\in S(D)}$ as shown in Fig.~\ref{fig:hopf_link_cube}.

\begin{figure}[h]
\centering\includegraphics[width=0.6\textwidth]{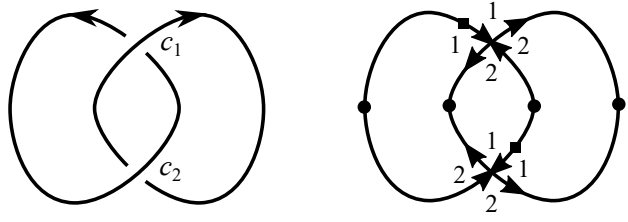}
\caption{Hopf link diagram and a LSSS and an oriented direction system on it}\label{fig:hopf_link}
\end{figure}

\begin{figure}[h]
\centering\includegraphics[width=0.6\textwidth]{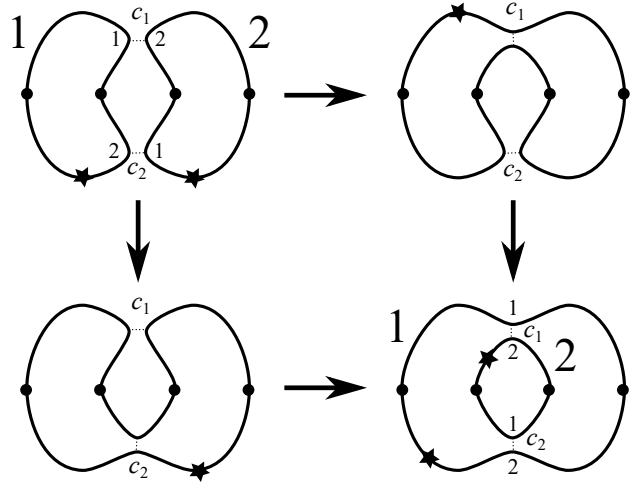}
\caption{Hopf link cube of states with a global order and star points}\label{fig:hopf_link_cube}
\end{figure}

Then $V(0,0)\simeq V(1,1)\simeq V\otimes V$, $V(0,1)\simeq V(1,0)\simeq V$.

Let us calculate the differential $\partial_{(0,0)\to (1,0)}$. It corresponds to the merging of two components in the crossing $c_1$. We have
$$
l_\lambda(c_1,\star_{(0,0)}(1))=l_\lambda(c_1,\star_{(0,0)}(2))=1,\quad l_\lambda(c_1,\star_{(1,0)}(1))=0.
$$
Hence, $\partial^0_{(0,0)\to (1,0)}\colon V\otimes V\to V$ is equal to $m\circ(\tau\otimes\tau)$.

Identifying the components with their numbers in the global order $\sigma_s$ we obtain  $\sigma_{(0,0)}\setminus c_1 =\emptyset$ , $\sigma_{(1,0)}\setminus c_1 =\emptyset$, $\sigma_{(0,0)}\triangleleft(o,c_1) =\sigma_{(0,0)}=\{1,2\}$, $\sigma_{(1,0)}\triangleleft(o,c_1) =\sigma_{(1,0)}=\{1\}$. Hence,
$$\epsilon(\sigma_{(0,0)},\sigma_{(0,0)}\triangleleft(o,c))=\epsilon(\sigma_{(1,0)},\sigma_{(1,0)}\triangleleft(o,c))= \epsilon(\sigma_{(0,0)}\setminus c_1,\sigma_{(1,0)}\setminus c_1)=1.
$$

Thus $\sign((0,0), (1,0))=1$ and  $\partial_{(0,0)\to (1,0)}= \partial^0_{(0,0)\to (1,0)}=m\circ(\tau\otimes\tau)$.

After calculating the other differential we have the following diagram.
$$
\xymatrix@C+3pc{V\otimes V \ar[r]^{m\circ(\tau\otimes\tau)}\ar[d]_{-m} & V\ar[d]^{-(\id\otimes\tau)\circ\Delta\circ\tau}\\
  V\ar[r]^{(\tau\otimes\id)\circ\Delta\circ\tau} & V\otimes V}
$$
The diagram is anticommutative and the homology $KH^*(D)$ of the complex is equal to $KH^0(D)=\Z^2$, $KH^1(D)=0$, $KH^2(D)=\Z^2$.
\end{example}

%
%

\begin{theorem}\label{thm:Khovanov_complex}
Let $D$ be a virtual link diagram. Then
\begin{enumerate}
\item $(CKh(D),d)$ is a chain complex, i.e. $d^2=0$;
\item the homology of the complex $(CKh(D),d)$ does not depend on the choice of auxiliary structures;
\item the homology of the complex $(CKh(D),d)$ coincides with the Khovanov homology of virtual links defined in~\cite{DKK2017,Manturov2007,Manturov2007a}.
\end{enumerate}
\end{theorem}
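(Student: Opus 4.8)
The plan is to prove the three assertions essentially in order, using the fact that $vp_D$ is a genuine cocycle on the atom (Corollary~\ref{prop:atom_cocycle_property}) --- equivalently, that every component of every Kauffman state meets an even number of cut loci (Corollary~\ref{cor:parity_cocycle_property}) --- as the single structural input that controls the involutions $\tau^{l_\lambda}$ throughout.

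\emph{Part (1).} Since $d=\sum_{s\to s'}\partial_{s\to s'}$ runs over the edges of the cube of states, $d^2$ is a sum over the square $2$-faces, and it suffices to check that each face anticommutes. Fix a face, governed by two crossings $c_1,c_2$. First I would show that the two unsigned compositions $\partial^0$ around the face agree: after verifying that the powers of $\tau$ inserted at the cut loci along the two routes coincide --- which is exactly where Corollary~\ref{cor:parity_cocycle_property} is used, since it makes $l_\lambda(p,q)\bmod 2$ independent of the chosen arc so that $\tau$-decorations can be slid freely around the face --- the equality reduces to the Frobenius-algebra identities for $V=\Z[X]/(X^2)$ (associativity of $m$, coassociativity of $\Delta$, and the Frobenius relation), exactly as in the classical case. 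The single-circle surgeries, where the differential is declared to be $0$, are handled separately: one checks that a face carrying such an edge has a vanishing opposite route (or two canceling routes), as in the parity Khovanov homology of \cite{DKK2017}. Second comes the sign bookkeeping: one verifies that $\sign(s,s')$ as defined in \eqref{eq:differential_sign} makes every face anticommute. This is a finite case analysis over the surgery types at $c_1$ and $c_2$, using multiplicativity of $\epsilon$ under composition of orders and the fact that interchanging the blocks of components passing through $c_1$ and $c_2$ contributes the single extra sign that turns commutativity into anticommutativity.

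\emph{Part (2).} I would treat the four auxiliary structures in increasing order of difficulty, exhibiting an explicit chain isomorphism for each elementary change. Moving a star point $\star_s(\gamma)$ past one cut locus replaces the $\gamma$-th tensor factor of every generator in $V(s)$ by its $\tau$-image; because the differentials \eqref{eq:khovanov_differential_split}--\eqref{eq:khovanov_differential_merge} are written through $l_\lambda(c,\star_s(\cdot))$, this is a chain isomorphism. Changing a component order $\sigma_s$ permutes the tensor factors of $V(s)$, and rescaling by the sign of the permutation intertwines the differentials, since $\epsilon$ enters \eqref{eq:differential_sign} linearly in $\sigma_s$ and $\sigma_{s'}$. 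Changing the oriented direction system $o$ alters only the factors $\epsilon(\cdot,\cdot\triangleleft(o,c))$, i.e. only the global signs of certain differentials, and a diagonal $\pm1$ rescaling of the spaces $V(s)$ fixes this. Finally, changing the LSSS $\lambda$ is the substantive case: by Proposition~\ref{prop:lsss_parity_cochains}, any two LSSS agree up to global change of orientation (which does not change $ssc_\lambda$) and a sequence of local changes at single crossings $c$, each adding $\delta c\bmod 2$ to $ssc_\lambda$, i.e. relocating cut loci among the arcs at $c$; for such an elementary change the enhanced states of every Kauffman state still correspond bijectively (the cut-loci parity on each component is unchanged, by Corollary~\ref{cor:parity_cocycle_property}), and inserting the compensating powers of $\tau$ yields a chain isomorphism.

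\emph{Part (3) and the main obstacle.} Assume $D$ oriented and specialize to $\lambda=\lambda_{can}$ (Fig.~\ref{fig:canonical_source_sink_orientation}) with $o$ the canonical oriented direction system; then the cut loci are exactly the canonical marks and formulas \eqref{eq:khovanov_differential_split}--\eqref{eq:differential_sign} reduce verbatim to the differential of the virtual Khovanov complex of \cite{DKK2017,Manturov2007,Manturov2007a}, so the two complexes coincide, which already yields $d^2=0$ in this case. Combining with Part (2) gives $d^2=0$, independence of the homology from all auxiliary data, and the identification with virtual Khovanov homology for arbitrary choices; so it is in fact cleanest to prove this special case of (3) first and deduce (1) and the general statements from it. The genuinely new difficulty, absent in classical Khovanov homology, is the $\tau^{l_\lambda}$-bookkeeping, and it concentrates in the face-anticommutativity of Part (1) and, above all, in the chain isomorphism for a change of LSSS in Part (2), where cut loci move and the sign datum $\sign(s,s')$ of \eqref{eq:differential_sign} must be kept consistent; both are controlled by the cocycle property of $vp_D$ on $A(D)$, and making that sign calculus survive these manipulations is the most delicate point.
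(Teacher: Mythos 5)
Your proposal follows essentially the same route as the paper: the paper likewise proves part (2) first, via explicit chain isomorphisms for each of the four auxiliary structures (star points, component orders, oriented directions, and the LSSS, the last reduced to single-crossing changes exactly as you suggest), and then obtains (1) and (3) by specializing to the canonical source-sink structure and canonical direction system, where the complex coincides verbatim with that of~\cite{DKK2017}. The one detail where your sketch diverges is the LSSS change: the paper's isomorphism there is not an insertion of powers of $\tau$ but the diagonal sign $(-1)^{\mu s(c)}$ combined with a compensating switch of the oriented direction at the affected crossing, using the identities $\tau\circ m\circ(\tau\otimes\tau)=m$ and $(\tau\otimes\tau)\circ\Delta\circ\tau=-\Delta$.
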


\begin{proof}
Firstly, we show that the choice of auxiliary structures does not change the homology. We construct isomorphisms between the complexes with different auxiliary structures.

1. Independence on the star points. Let $\tilde\star_s$ be a new set of star points in the Kauffman states $D_s, s\in S(D),$ and $\widetilde{CKh}(D)$ be the Khovanov complex corresponding to the new star points. The complex $\widetilde{CKh}(D)$  is isomorphic to the original complex $\widetilde{CKh}(D)$ via the map $\phi_s\colon V_{\sigma_s,\star_s}\to V_{\sigma_s,\tilde\star_s}$,
$$
\phi_s\left(\bigotimes_{i=1}^{|\Gamma(s')|}x_i\right)=\bigotimes_{i=1}^{|\Gamma(s')|} \tau^{l_\lambda(\star_s(\sigma_s(i)),\tilde\star_s(\sigma_s(i)))}(x_i).
$$

We should check the equality $\tilde\partial_{s\to s'}=\phi_{s'}\circ\partial_{s\to s'}\circ\phi_{s}^{-1}$. It follows from the equalities \begin{gather*}
l_\lambda(c,\tilde\star_s(\gamma))=l_\lambda(c,\star_s(\gamma))+l_\lambda(\star_s(\gamma),\tilde\star_s(\gamma)),\\
l_\lambda(\tilde\star_s(\gamma),\tilde\star_{s'}(\gamma))=l_\lambda(\star_s(\gamma),\tilde\star_s(\gamma))
+l_\lambda(\star_s(\gamma),\star_{s'}(\gamma))+l_\lambda(\star_{s'}(\gamma),\tilde\star_{s'}(\gamma)).
\end{gather*}
for any crossing $c\in\X(C)$ and any component $\gamma\in\Gamma(s)$ which passes by $c$ in the first equation, and doesn't pass in the second.


2. Independence on the global order. Let $\{\tilde\sigma_s\}_{s\in S(D)}$ be a new global order and $\widetilde{CKh}(D)=\bigoplus_s V_{\tilde\sigma_s,\star_s}$ be the Khovanov complex corresponding to the new order.  The isomorphism $\phi_s\colon V_{\sigma_s,\star_s}\to V_{\tilde\sigma_s,\star_s}$ is defined by the formula
$$
\phi_s(x_1\otimes\dots x_{|\Gamma(s)|})= \epsilon(\sigma_s,\tilde\sigma_s)x_{\sigma_s^{-1}\circ\tilde\sigma_s(1)}\otimes\dots\otimes
x_{\sigma_s^{-1}\circ\tilde\sigma_s(|\Gamma(s)|)}.
$$
We can write $\phi_s=\epsilon(\sigma_s,\tilde\sigma_s)\phi^0_s$. Since $\phi^0_s$ is a renumbering of multipliers in the product $V_{\sigma_s,\star_s}=V^{\otimes |\Gamma(s)|}$, we have $\tilde\partial^0_{s\to s'}=\phi^0_{s'}\circ\partial^0_{s\to s'}\circ(\phi_s^0)^{-1}$.

Let us check the sign. We have
\begin{gather*}
\epsilon(\tilde\sigma_s,\tilde\sigma_s\triangleleft(o,c))=\epsilon(\sigma_s,\tilde\sigma_s)\cdot
\epsilon(\sigma_s,\sigma_s\triangleleft(o,c))\cdot\epsilon(\sigma_s\triangleleft(o,c),\tilde\sigma_s\triangleleft(o,c)),\\
\epsilon(\tilde\sigma_s\setminus c,\tilde\sigma_{s'}\setminus c)=\epsilon(\sigma_s\setminus c,\tilde\sigma_s\setminus c)\cdot
\epsilon(\sigma_s\setminus c,\sigma_{s'}\setminus c)\cdot\epsilon(\sigma_{s'}\setminus c,\tilde\sigma_{s'}\setminus c),\\
\epsilon(\tilde\sigma_{s'},\tilde\sigma_{s'}\triangleleft(o,c))=\epsilon(\sigma_{s'},\tilde\sigma_{s'})\cdot
\epsilon(\sigma_{s'},\sigma_{s'}\triangleleft(o,c))\cdot\epsilon(\sigma_{s'}\triangleleft(o,c),\tilde\sigma_{s'}\triangleleft(o,c)).
\end{gather*}
By equation~\eqref{eq:sigma*oc} we have
$$\epsilon(\sigma_s\triangleleft(o,c),\tilde\sigma_s\triangleleft(o,c))=\epsilon(\sigma_s\setminus c,\tilde\sigma_s\setminus c),\  \epsilon(\sigma_{s'}\triangleleft(o,c),\tilde\sigma_{s'}\triangleleft(o,c))=\epsilon(\sigma_{s'}\setminus c,\tilde\sigma_{s'}\setminus c).
$$
Thus,
\begin{multline*}
\widetilde{\sign}(s,s')=\epsilon(\tilde\sigma_s,\tilde\sigma_s\triangleleft(o,c))\cdot\epsilon(\tilde\sigma_s\setminus c,\tilde\sigma_{s'}\setminus c)\cdot\epsilon(\tilde\sigma_{s'},\tilde\sigma_{s'}\triangleleft(o,c))=\\
\epsilon(\sigma_s,\sigma_s\triangleleft(o,c))\cdot\epsilon(\sigma_s\setminus c,\sigma_{s'}\setminus c)\cdot\epsilon(\sigma_{s'},\sigma_{s'}\triangleleft(o,c))=\sign(s,s').
\end{multline*}

3. Independence on the oriented direction system. Let $\tilde o$ be a new oriented direction system which differs from $o$ in some crossing $c\in \X(D)$, and $\widetilde{CKh}(D)$ be the Khovanov complex corresponding to the new structure. The complex $\widetilde{CKh}(D)=\bigoplus_{s\in S(D)} V_{\sigma_s,\star_s}$ has the same chains as ${CKh}(D)$ but it has new differentials
$$
\tilde\partial_{s\to s'}=\widetilde{\sign}(s,s')\partial^0_{s\to s'}
$$
which differ from $\partial_{s\to s'}$ by some signs.

Define isomorphisms $\phi_s\colon V_{\sigma_s,\star_s}\to V_{\sigma_s,\star_s}, s\in S(D),$ as $\phi_s=(-1)^{s(c)}\id_{V_{\sigma_s,\star_s}}$.
We recall that any state $s$ is a map from the set of crossings $\X(C)$ to $\{0,1\}$.

We need to check that for any surgery $s\to s'$ we have
\begin{equation}\label{eq:direction_change_sign}
\widetilde{\sign}(s,s')=(-1)^{s(c)}\sign(s,s')(-1)^{s'(c)}.
\end{equation}

Let $s\to s'$ be a surgery at some crossing $c'\ne c$. Then $s(c)=s'(c)$. The local order $\tilde o^s_{c'}$ coincides with $o^s_{c'}$, and  $\tilde o^{s'}_{c'}$ coincides with $o^{s'}_{c'}$. Hence, $\widetilde{\sign}(s,s')=\sign(s,s')$ and the equality~\eqref{eq:direction_change_sign} holds.

Let $s\to s'$ be a splitting at the crossing $c$. Then $s(c)\ne s'(c)$ and $(-1)^{s'(c)}=-(-1)^{s(c)}$. The local orders $\tilde o^s_{c}$ $o^s_{c}$ coincide since only one component of $D_s$ passes by $c$, and the orders $\tilde o^{s'}_{c}$ and  $o^{s'}_{c}$ are opposite. Then
$\epsilon(\sigma_s,\sigma_s\triangleleft(\tilde o,c))=\epsilon(\sigma_s,\sigma_s\triangleleft(o,c))$ and
\begin{multline*}
\epsilon(\sigma_{s'},\sigma_{s'}\triangleleft(\tilde o,c))=\epsilon(\sigma_{s'},\sigma_{s'}\triangleleft(o,c))\cdot\epsilon(\sigma_{s'}\triangleleft(o,c),\sigma_{s'}\triangleleft(\tilde o,c))=\\
\epsilon(\sigma_{s'},\sigma_{s'}\triangleleft(o,c))\cdot\epsilon(o^{s'}_{c},\tilde o^{s'}_{c})=-\epsilon(\sigma_{s'},\sigma_{s'}\triangleleft(o,c)).
\end{multline*}
Hence,
\begin{multline*}
\widetilde{\sign}(s,s')=\epsilon(\sigma_s,\sigma_s\triangleleft(\tilde o,c))\cdot\epsilon(\sigma_s\setminus c,\sigma_{s'}\setminus c)\cdot\epsilon(\sigma_{s'},\sigma_{s'}\triangleleft(\tilde o,c))=\\
-\epsilon(\sigma_s,\sigma_s\triangleleft(o,c))\cdot\epsilon(\sigma_s\setminus c,\sigma_{s'}\setminus c)\cdot\epsilon(\sigma_{s'},\sigma_{s'}\triangleleft(o,c))=-\sign(s,s'),
\end{multline*}
and the equality~\eqref{eq:direction_change_sign} is valid in this case.

The case of a merging of two components at the crossing $c$ can be considered analogously.

4. Independence on local source-sink structure. Let $\tilde\lambda$ is a new LSSS which differs from $\lambda$ in some crossing $c$.
We define the oriented direction system for $\tilde\lambda$ in the crossing $c$ as shown in Fig.~\ref{fig:source_sink_order_switch}.

\begin{figure}[h]
\centering\includegraphics[width=0.5\textwidth]{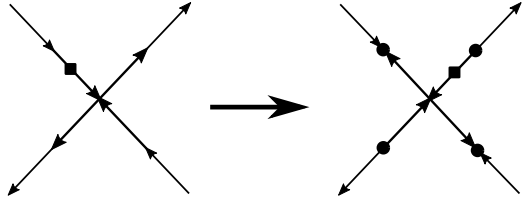}
\caption{Change of oriented direction with local source-sink orientation}\label{fig:source_sink_order_switch}
\end{figure}

Let $\widetilde{CKh}(D)=\bigoplus_{s\in S(D)} V_{\sigma_s,\star_s}$ be the Khovanov complex corresponding to the new structure. It has the same cochains as the complex ${CKh}(D)$ but different differentials $\tilde\partial_{s\to s'}$. Then we define an isomorphism $\phi\colon CKh(D)\to\tilde CKh(D)$ as follows
\begin{equation*}
\phi_s(x)=(-1)^{\mu s(c)}x
\end{equation*}
where $\mu=0$ if the sink edges of $\lambda$ were the overcrossing arc in $c$, and $\mu=1$ if the sink edges of $\lambda$ were the undercrossing arc in $c$, see Fig.~\ref{fig:source_sink_order_change0},\ref{fig:source_sink_order_change1}.

\begin{figure}[h]
\centering\includegraphics[width=0.5\textwidth]{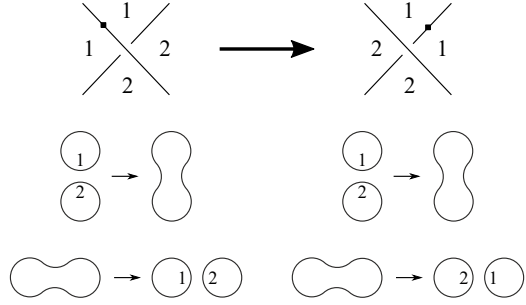}
\caption{Case $\mu=0$}\label{fig:source_sink_order_change0}
\end{figure}
\begin{figure}[h]
\centering\includegraphics[width=0.5\textwidth]{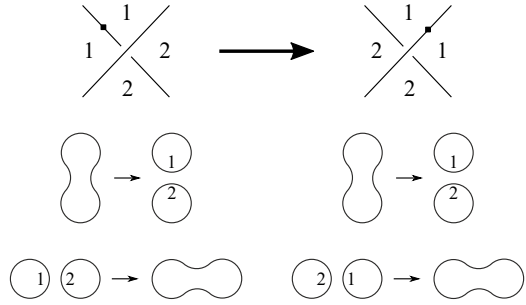}
\caption{Case $\mu=1$}\label{fig:source_sink_order_change1}
\end{figure}

Let $s\to s'$ be a surgery at some crossing $c'\ne c$. Then $s(c)=s'(c)$, hence $(-1)^{\mu s(c)}=(-1)^{\mu s'(c)}$. The LSSS $\tilde\lambda$ adds a cut locus to each edge incident to the crossing $c$. After smoothing these four cul loci splits in pairs. Assuming the star points lie far from $c$, we have
\begin{gather*}
l_{\tilde\lambda}(c',\star_s(\gamma))=l_\lambda(c',\star_s(\gamma))\pmod 2,\\
l_{\tilde\lambda}(\star_s(\gamma),\star_{s'}(\gamma))=l_{\lambda}(\star_s(\gamma),\star_{s'}(\gamma))\pmod 2
\end{gather*}
for any component $\gamma$ passing by $c'$ in the first equation, and not passing by $c'$ in the second equation.

Therefore, $\tilde\partial^0_{s\to s'}=\partial^0_{s\to s'}$ and $\tilde\partial_{s\to s'}=\partial_{s\to s'}$.

Let $s\to s'$ be a surgery at the crossing $c$. Then $s(c)\ne s'(c)$. The LSSS $\tilde\lambda$ adds a cut locus to each edge incident to the crossing $c$, so
$$
l_{\tilde\lambda}(c,\star_s(\gamma))=l_\lambda(c,\star_s(\gamma))+1,\quad l_{\tilde\lambda}(c,\star_{s'}(\gamma'))=l_\lambda(c,\star_{s'}(\gamma'))+1
$$
for any components $\gamma\in\Gamma(s)$ and $\gamma'\in\Gamma(s')$ which pass by the crossing $c$. Thanks to the identities
$$\tau\circ m\circ(\tau\otimes\tau)=m,\quad (\tau\otimes\tau)\circ\Delta\circ\tau=-\Delta,$$
we have $\tilde\partial^0_{s\to s'}=\partial^0_{s\to s'}$ for a merging in the crossing $c$, and $\tilde\partial^0_{s\to s'}=-\partial^0_{s\to s'}$ for a splitting in $c$.

Let us check the signs. Assume that the chosen sink edge in the crossing $c$ belongs to the over-arc (case $\mu=0$). Let the surgery $s\to s'$ be a merging. Then the local orders $o^s_c$ and $\tilde o^s_c$ coincide (see Fig.~\ref{fig:source_sink_order_change0}). Hence, $\widetilde{\sign}(s,s')=\sign(s,s')$ and $\tilde\partial_{s\to s'}=\partial_{s\to s'}$.

If the surgery $s\to s'$ is a splitting then the local orders $o^s_c$ and $\tilde o^s_c$ are opposite. Hence, $\widetilde{\sign}(s,s')=-\sign(s,s')$ and
$$\tilde\partial_{s\to s'}=\widetilde{\sign}(s,s')\tilde\partial^0_{s\to s'}=(-\sign(s,s'))(-\partial^0_{s\to s'})=\partial_{s\to s'}.
$$

When $\mu=1$ (see Fig.~\ref{fig:source_sink_order_change1}) we have $\tilde\partial_{s\to s'}=-\partial_{s\to s'}$ in all cases, so
$$
\phi_{s'}\partial_{s\to s'}\phi^{-1}_s=(-1)^{\mu s(c)}(-1)^{\mu s'(c)}(-\tilde\partial_{s\to s'})=\tilde\partial_{s\to s'}.
$$

Thus, the complex up to isomorphism does not depend on the auxiliary structures.

Now, assume that $D$ is oriented and $\lambda$ and $o$ are the canonical source-sink structure and the canonical oriented direction system. Then the complex $(CKh(D),d)$ coincides with the complex in paper~\cite{DKK2017}. Hence, the complex is well-defined and its homology is the Khovanov homology of the virtual link $D$.
\end{proof}

\begin{remark}
The constructions of Khovanov homology in~\cite{Manturov2007} and~\cite{DKK2017} rely on the canonical parity cocycles. The unoriented Khovanov complex~\cite{BKM2020} uses implicitly the virtual parity cocycle, which adds cut loci for every virtual crossing. Thus, our description of the Khovanov homology complex can be treated as a unification of those construction.
\end{remark}


\begin{thebibliography}{10}

\bibitem{BKM2020}{S. Baldridge, L.H. Kauffman, B. McCarty, Unoriented virtual Khovanov homology, arXiv:2001.04512}

\bibitem{BF2008}{A. Bartholomew and R. Fenn, Quaternionic invariants of virtual knots and links, J.
Knot Theory Ramifications 17 (2008) 231--251}

\bibitem{Cheng2013}{Z. Cheng, H. Gao, A polynomial invariant of virtual links, J. Knot Theory Ramifications 22 (2013), no. 12, 1341002}

\bibitem{DKK2017}{H.A. Dye, A. Kaestner, L.H. Kauffman, Khovanov homology, Lee homology and a Rasmussen invariant for virtual knots, J. Knot Theory Ramifications 26 (2017), no. 3, 1741001}

\bibitem{Folwaczny2013}{L.C. Folwaczny, L.H. Kauffman, A linking number definition of the affine index polynomial and applications, J. Knot Theory Ramifications 22 (2013), no. 12, 1341004}

\bibitem{Henrich2010}{A. Henrich, A sequence of degree one Vassiliev invariants for virtual knots, J. Knot
Theory Ramifications 19(4) (2010) 461--487}

\bibitem{IM13}{V. Manturov, D. Ilyutko, Virtual Knots: The State of the Art. Series on Knots and Everything. World Scientific Publishing Co, Hackensack, 2013.}

\bibitem{IMN}{D.P. Ilyutko, V.O. Manturov, I.M. Nikonov, Parity in knot theory and graph links,  J. Math. Sci. 193 (2013), no. 6, 809--965}

\bibitem{IKP}{Y.H. Im, S. Kim, K.I. Park, Some polynomial invariants of virtual links via a parity, J. Knot Theory Ramifications, Vol. 26, No. 1 (2017) 1750009}

\bibitem{ILL}{Y.H. Im, K. Lee, S.Y. Lee, Index polynomial invariant of virtual links, J. Knot Theory Ramifications, Vol. 19, No. 5 (2010) 709--725}

\bibitem{Im2013}{Y.H. Im, K.I. Park, A parity and a multi-variable polynomial invariant for virtual links, J. Knot Theory Ramifications, Vol. 22, (2013), no. 13,  1350073}

\bibitem{KK2000}{N. Kamada and S .Kamada, Abstract link diagrams and virtual knots, J.Knot Theory Ramifications9(2000), 93--106}

\bibitem{KauffmanManturov2005}{L.H. Kauffman, V.O. Manturov, Virtual biquandles,  Fund. Math. 188 (2005), 103–146}

\bibitem{KR2002}{ Kauffman, L.H. and Radford, D. (2002), Bi–Oriented Quantum Algebras, and Generalized Alexander Polynomial for Virtual Links, AMS. Contemp. Math.,318, pp. 113–140.}


\bibitem{Nelson2015}{M. Elhamdadi, S. Nelson, Quandles: an introduction to the algebra of knots, AMS, Providence, 2015}

\bibitem{Manturov2000}{V.O. Manturov, Bifurcations, atoms and knots,  Moscow Univ. Math. Bull. 55 (2000), no. 1, 1--7}

\bibitem{Manturov2002}{V.O. Manturov, On Invariants of Virtual Links, Acta Applicandae Mathematicae, Vol. 72, no. 3 (2002), pp. 295--309.}

\bibitem{Manturov2003}{V.O. Manturov,  Multivariable polynomial invariants for
virtual knots and links, Journal of Knot Theory and Its Ramifications, Vol. 12 ,no. 8 (2003) pp. 1131-1144}

\bibitem{Manturov2007}{V.O. Manturov, Khovanov homology for virtual knots with arbitrary coefficients, J. Knot Theory Ramifications, 16(3):345--377, 2007.}

\bibitem{Manturov2007a}{V.O. Manturov,  The Khovanov complex for virtual links, Journal of
Mathematical Sciences, 144(5):4451–4467, 2007.}


\bibitem{Saw}{J. Sawollek, On Alexander--Conway polynomials for virtual knots
and links, J. Knot Theory Ramifications 12 (2003), no. 6, 767--779.}

\bibitem{SW}{D. S. Silver and S. G. Williams, Alexander Groups and Virtual
Links, Journal of Knot Theory and Its Ramifications, 10 (1),pp. 151--160.}

\bibitem{Sweedler}{M.E. Sweedler, Hopf algebras, Mathematics Lecture Note, 1969}

\end{thebibliography}
\end{document}